\newtheorem{theorem}{Theorem}[section]
\newtheorem{lemma}[theorem]{Lemma}
\newtheorem{proposition}[theorem]{Proposition}
\newtheorem{corollary}[theorem]{Corollary}
\newtheorem{definition}[theorem]{Definition}
\newtheorem{remark}[theorem]{Remark}
\newtheorem*{proof*}{Proof}
\numberwithin{equation}{section}
\newcommand{\R}{\ensuremath{\mathbb{R}}}
\newcommand{\N}{\ensuremath{\mathbb{N}}}
\newcommand{\El}{\ensuremath{\mathcal{E}_\lambda}}
\newcommand{\Levy}{\ensuremath{\mathcal{L}}}
\newcommand{\alp}{\alpha}
\newcommand{\kernel}{\ensuremath{\lambda}}
\newcommand{\Hdot}{\dot{H}}
\newcommand{\UC}{\mathscr{UC}}
\newcommand{\EC}{\mathscr{EC}}
\newcommand{\dd}{\mathrm{d}}
\newcommand{\dell}{\partial}
\newcommand{\indikator}{\mathbf{1}_{|z|\leq 1}}
\DeclareMathOperator{\supp}{supp}
\title[On solutions with finite energy]{On the well-posedness of
  solutions with finite energy for nonlocal equations of porous medium type}
\author{F\'elix del Teso, J\o rgen Endal, and Espen R. Jakobsen}
\begin{document}

\noindent{\em Dedicated to Helge Holden, who never stops inspiring us, on the occasion of his 60th birthday.} 

\begin{abstract} 
We study well-posedness and equivalence of different notions of
solutions with finite energy for nonlocal porous medium type equations
of the form  
$$\dell_tu-A\varphi(u)=0.$$
These equations are possibly degenerate nonlinear diffusion equations
with a general nondecreasing continuous nonlinearity $\varphi$ and the
largest class of linear symmetric nonlocal diffusion operators $A$
considered so far. The operators are defined from a bilinear energy
form $\mathcal{E}$ and may be degenerate and have some $x$-dependence.
The fractional Laplacian, symmetric finite differences, and any
generator of symmetric pure jump L\'evy processes are included. The main
results are (i) an Ole\u{\i}nik type uniqueness result for energy
solutions; (ii) an existence (and uniqueness) result for
distributional solutions with finite energy; and (iii) equivalence between
the two notions of solution, and as a consequence, new well-posedness
results for both notions of solutions. We also obtain quantitative energy and related $L^p$-estimates for distributional solutions. Our uniqueness results are given for a class of
  functions defined from test functions by completion in a certain
  topology. We study rigorously several cases where this space
  coincides with standard function spaces. In particular, for operators
  comparable to fractional Laplacians, we show that this space is a
  parabolic homogeneous fractional Sobolev space.
 
\end{abstract}

\begin{classification}
35A02, 35B30, 35D30, 35K55, 35K65, 35R09, 35R11
\end{classification}

\begin{keywords}
uniqueness, existence, energy solutions, distributional solutions,
nonlinear degenerate diffusion, porous medium equation, Stefan
problem, fractional Laplacian, nonlocal operators, bilinear forms,
Dirichlet forms, homogeneous fractional Sobolev spaces.
\end{keywords}

\maketitle

\tableofcontents

\section{Introduction}

In this paper we study uniqueness and existence of solutions with
finite energy of the following two related Cauchy problems of  nonlocal porous
medium type, 
\begin{align}\label{EE}
\dell_t u -A^\kernel[\varphi(u)]&=0 &&\text{in}\quad Q_T:=\R^N\times(0,T),\\
u(x,0)&=u_0(x) &&\text{on}\quad \R^N, \label{EIC}\\
\intertext{and}
\label{E}
\dell_t u -\Levy^\mu[\varphi(u)]&=0 &&\text{in}\quad Q_T,\\
u(x,0)&=u_0(x) &&\text{on}\quad \R^N, \label{IC}
\end{align}
where $u=u(x,t)$ is the solution, $T>0$, $A^\lambda$ and $\Levy^\mu$ 
are nonlocal (convection-) diffusion operators, the nonlinearity
$\varphi$ is any continuous nondecreasing function, and $u_0\in L^1\cap
L^\infty$. The problems are nonlinear degenerate parabolic and 
include the fractional porous medium equations \cite{PaQuRoVa12} where
$\Levy^\mu=-(-\Delta)^{\frac\alp2}$ and $\varphi(u)=u|u|^{m-1}$ for
$\alp\in(0,2)$ and $m>0$. Included are also Stefan
problems, filtration equations, and generalized porous medium
equations, see the introductions of \cite{PaQuRoVa12,PaQuRo16, DTEnJa15} for more information. 

Both problems are connected to a bilinear energy form defined as
\begin{equation}\label{energy0}
\mathcal{E}_\kernel[f,g]:=\frac{1}{2}\iint_{\R^N\times\R^N\setminus D}\big(f(y)-f(x)\big)\big(g(y)-g(x)\big)\, \Lambda(\dd x, \dd y),
\end{equation}
where $D:=\{(x,x) : x\in\R^N\}$ is the diagonal and $\Lambda$ is a 
nonnegative Radon measure on $\R^N\times\R^N\setminus D$. The operator
$A^\kernel$ is the generator of $\mathcal{E}_\kernel$ defined by
\begin{align}\label{defgen}
\mathcal{E}_\kernel[f,g]=-\int_{\R^N}f A^\kernel[g]\,\dd x
\end{align}
(see Corollary 1.3.1 in \cite{FuOsTa94}), while $\Levy^\mu=A^\lambda$
for the special case where $\Lambda=\mu(x+\dd y)\dd x$. In general
$A^\kernel$ is symmetric, $x$-dependent, and has no closed expression, while
$\Levy^\mu$ is an $x$-independent operator with integral
representation 
\begin{equation}\label{deflevy}
\Levy ^\mu [\phi](x)=\int_{\R^N\setminus\{0\} }\big( \phi(x+z)-\phi(x)-z\cdot D\phi(x)\indikator \big)\,\mu(\dd z),
\end{equation}
where $D$ is the gradient, $\indikator$ an indicator function, and 
$\mu$ a symmetric (even) nonpositive L\'evy measure satisfying 
$\int|z|^2\wedge 1\,\mu(\dd z)<\infty$. 
The operator $\Levy^\mu$ is nonnegative and symmetric and  the
fractional Laplacian is an example.

A first warning is that $A^\lambda$ is not a
pure diffusion operator in general: Under density and symmetry
assumptions on $\Lambda$, $A^\lambda$ will have an integral
representation like \eqref{deflevy} with $x$-depending $\mu$ plus an
additional drift term! A second warning is that the $x$-dependence in
$A^\lambda$ is restricted, e.g. $-a(x)(-\Delta)^{\frac\alp2}$ is not
covered! We refer to Section 
\ref{sec:assumptions} for precise assumptions and to Section
 \ref{sec:rem} for a discussion and examples of $A^\lambda$. 

The inspiration for this work were the two recent papers \cite{PaQuRo16} and  \cite{DTEnJa15} which contain well-posedness results for energy (or weak)
solutions of \eqref{EE}--\eqref{EIC} and distributional (or very weak) solutions of \eqref{E}--\eqref{IC} respectively. These very general results
requires different 
techniques and formulations. The uniqueness argument of
\cite{DTEnJa15} is based on a complicated resolvent approximation
procedure of Br\'ezis and Crandall \cite{BrCr79}, while in
\cite{PaQuRo16} it is based on an easier and more direct argument
by Ole\u{\i}nik et al. \cite{OlKaCz58}. 

The first part of this paper is devoted to Ole\u{\i}nik type uniqueness
arguments for \eqref{EE}--\eqref{EIC}. We try to push this argument
as far as possible, and in the process we extend some of the results and arguments of \cite{PaQuRo16}. E.g., we remove absolute
continuity, symmetry, and comparability assumptions.  We also discuss
the applicability and
 limitations of the method. 
 Our uniqueness results are given for a class of
  functions defined from test functions by completion in a certain
  topology. We study rigorously several cases where this space
  coincides with standard function spaces. In particular, for operators
  (globally)
  comparable to fractional Laplacians, we show that this space is a
  parabolic homogeneous fractional Sobolev space. In an appendix we
  also provide rigorous definitions and results of these spaces, some
  of which we were not able to find in the literature.

In the second part 
of the paper we study 
the equivalence between energy and distributional formulations in the
setting of \eqref{E}--\eqref{IC}. A main result is a new
existence result for distributional solutions with finite energy. This
existence result and the uniqueness result of \cite{DTEnJa15} is then
transported from distributional solutions to energy solutions by
equivalence, while the Ole\u{\i}nik uniqueness results of the first part is
transported in the other direction. These result are all either new,
or for the Ole\u{\i}nik results, represent a much simpler approach to obtaining
uniqueness compared to \cite{DTEnJa15}. At the end, we give several new quantitative energy and related $L^p$-estimates for distributional solutions.

The type of bilinear form defined in \eqref{energy0}
  plays a central role in probability theory. It is associated with a
  Dirichlet form and a corresponding symmetric Markov process, see
  e.g. \cite{FuOsTa94} for a general theory.  The
  type of ``nonlocal'' bilinear form we consider here is
  similar to those studied in e.g. \cite{ScUe07,BaBaChKa09}. In the
  linear case ($\varphi(u)=u$), equations \eqref{EE} and \eqref{E} are
  (at least formally) Kolmogorov equations for the transition
  probability densities of the corresponding Markov processes (see
  e.g. Section 3.5.3 in \cite{App09}). 

Let us
  now give a brief summary of previous works on
  \eqref{EE}--\eqref{EIC} and \eqref{E}--\eqref{IC}. We focus first on
  the $x$-dependent equation \eqref{EE}. 
 In the linear case there is a large amount of literature. Some of the main trends
 in the more PDE oriented community are described in the two surveys
 \cite{KaSc14, Sil14} (along with extensions to other types of nonlinear
 equations). When $\varphi$ is nonlinear, we are not aware of any other
  result than the ones presented in \cite{PaQuRo16}. There the
authors consider operators $A^\kernel$ where the densities of the
measures are comparable to the density of the fractional
Laplacian. Existence and uniqueness is discussed in the first part,
but the main focus of the paper is to prove continuity/regularity and
long time asymptotics for energy solutions.

There is a vast literature on special cases of
\eqref{E}--\eqref{IC}. In the linear fractional case
$\dell_tu+(-\triangle)^{\frac{\alpha}{2}}u=0$ for $\alpha\in(0,2)$, we
have well-posedness even for measure data and solutions growing at
infinity \cite{BaPeSoVa14, BoSiVa16}. If we replace
$(-\triangle)^{\frac{\alpha}{2}}$ by an operator $\Levy$ whose measure
has integrable density, well-posedness results can be found in
\cite{BrPa15}. In the case of the fractional porous medium equation
(see above), existence, uniqueness and a priori estimates are proven for
(strong) $L^1$-energy solutions in \cite{PaQuRoVa11, PaQuRoVa12}. 
We also mention that there are results for that equation in weighted
$L^1$-spaces \cite{BoVa14}, with logarithmic diffusion
($\varphi(u)=\log(1+u)$) \cite{PaQuRoVa14}, singular or ultra fast
diffusions \cite{BoSeVa16}, weighted equations with measure data \cite{GrMuPu15}, and problems on bounded domains \cite{BoSiVa14,
  BoVa15, BoVa16}. There are other ways to investigate these
equations: In \cite{BiKaMo10,CaVa11,StanTesoVazCRAS, BiImKa15, 
  StTeVa16}, the authors consider a so-called porous medium equation
with fractional pressure, and in \cite{AMRT10} they consider bounded
diffusion operators that can be represented by nonsingular integral
operators on the form \eqref{deflevy}. Finally, we mention that in the
presence of (nonlinear) convection in \eqref{E}--\eqref{IC}, additional
entropy conditions are needed to have uniqueness \cite{Ali07, CiJa11,
  EnJa14}; a counterexample for uniqueness of distributional solutions
is given in \cite{AlAn10}.

\subsection*{Outline} In Section \ref{sec:mainresults} we state the
assumptions and present and discuss our main results. The main
uniqueness result is proven in Section
\ref{sec:proofofgeneraluniquenessx}. Properties such as equivalence of
distributional and energy solutions, existence of distributional
solutions with finite energy, and energy and $L^p$-estimates are
finally proven in Section \ref{sec:propdistsoln}. In
  Appendices \ref{sec:spaceX}, \ref{sec:homfracSob}, and
  \ref{sec:proofcomparablefracLap} we give rigorous results on the
   Sobolev spaces we use in this paper along
  with the proofs of characterizations of the uniqueness function
  class in terms of common function spaces.

\subsection*{Notation} 
We use the same notation as in \cite{DTEnJa15} except for the ones we explicitly mention here:
The (Borel) measure $\mu$ is said to be {\em even} if $\mu(B)=\mu(-B)$ for all Borel sets $B$. 
We say that the (Borel) measure $\Lambda(\dd x, \dd y)$ is {\em
  symmetric} if $\Lambda(\dd x, \dd y)=\Lambda(\dd y, \dd x)$. A {\em
  kernel} $\kernel(x,\dd y)$ on $\R^N\times
\mathcal{B}(\R^N\setminus\{x\})$ satisfies: (i) $B\mapsto \kernel(x,B)$
is a positive measure on $\mathcal{B}(\R^N\setminus\{x\})$ for each
fixed $x\in\R^N$; and (ii) $x\mapsto\kernel(x,B)$ is a Borel measurable
function for every $B\in \mathcal{B}(\R^N\setminus\{x\})$. An operator
$L$ is {\em symmetric} on $L^2$ if $(u, Lv)_{L^2}=(Lu,v)_{L^2}$.
From the {\em bilinear form} $\mathcal{E}_\kernel$ defined in \eqref{energy0} we define a seminorm (the energy) and
a space,
\begin{align*}
&|f|_{E_\kernel}^2:=\mathcal{E}_\kernel[f,f]
\quad\text{and}\quad E_{\kernel}(\R^N):=\big\{f\textup{ is measurable}\,:\,
|f|_{E_\kernel}<\infty\big\},
\end{align*}
and the related parabolic (energy) seminorm and space,
\begin{align*}
&|f|_{T,E_\lambda}^2:=\int_0^T|f(\cdot,t)|_{E_\lambda}^2\,\dd t,\\
&L^2(0,T;E_{\kernel}(\R^N)):=\big\{f:Q_T\to\R \textup{ is measurable}\,:\,
  |f|_{T,E_\lambda}<\infty\big\}.
\end{align*}
The Cauchy-Schwartz inequality holds in this setting (cf. Lemma \ref{C-S}):
\begin{align*}\label{CS-ineq}
\left|\int_0^T \mathcal{E}_\kernel[f(\cdot,t),g(\cdot,t)]\,\dd t\right|\leq
|f|_{T,E_\lambda} |g|_{T,E_\lambda}.
\end{align*}


\section{Main results}
\label{sec:mainresults}

In this section we give the assumptions, main results, and a
discussion of these. There are two sections with results. Section
\ref{sec:uniquenessenergysolns} contains a sequence of uniqueness
results for energy solutions of 
\eqref{EE}--\eqref{EIC}, while Section \ref{sec:equiv} contains results about
\eqref{E}--\eqref{IC}. There we prove the equivalence of energy and
distributional solutions with finite energy, the existence of the
latter type of solutions, and transport uniqueness and existence
results between the two formulations. The results we obtain are either
new or represent a much more efficient way to obtain such results
compared to previous arguments.

\subsection{Assumptions}
\label{sec:assumptions}

We start by the bilinear form $\mathcal{E}_\kernel$ defined in
\eqref{energy0}. To have a more practical formulation of the assumptions, we first
rewrite \eqref{energy0}: We assume that $\Lambda$ has as kernel $\tilde{\kernel}\geq0$ with
respect to $\dd x$, $\Lambda(\dd x, \dd y)=\tilde{\kernel}(x,\dd y)\dd x,$
change variables $y\to x+z$, and set
$\lambda(x,\dd z):=\tilde\lambda(x,x+\dd z)$ to obtain
\begin{equation}\label{energy}
\mathcal{E}_\kernel[f,g]=\frac{1}{2}\int_{\R^N}\int_{|z|>0}\big(f(x+z)-f(x)\big)\big(g(x+z)-g(x)\big)\, \kernel(x,\dd z)\dd x.
\end{equation}
Our assumptions on $\mathcal{E}_\kernel$ can then be formulated as follows:
\begin{itemize}
\item[\bf (A$_{\kernel 0}$)] $\Lambda$ has as kernel $\tilde\kernel\geq0$ on
$\R^N\times\mathcal{B}(\R^N\setminus\{x\})$,
$$\Lambda(\dd x, \dd y)=\tilde\kernel(x,\dd y)\dd x.$$
\item[\bf (A$_{\kernel 1}$)] The translated kernel $\lambda(x,\dd
  z):=\tilde\lambda(x,x+\dd z)$ satisfies 
\item[] \ (i) \ $\Sigma_\kernel(x):=\int_{0<|z|\leq
    1}|z|^2\kernel(x,\dd z)\in L_\textup{loc}^1(\R^N)$; and
\item[]  \ (ii) \ $\Pi_\kernel(x):=\int_{|z|>1}\kernel(x,\dd z)\in
  L_\textup{loc}^1(\R^N)$. 
\item[\bf (A$_{\kernel 2}$)]  $\Lambda$ is symmetric, 
$$\displaystyle\int_A\int_B\Lambda(\dd x, \dd
y)=\int_B\int_A\Lambda(\dd x, \dd y)\quad\text{for all Borel}\quad A\times B\subset \R^N\times\R^N\setminus D.$$ 
\end{itemize}
In some results, we need to strengthen assumption {\bf (A$_{\kernel 1}$)}.
\begin{itemize}
\item[\bf (A$_{\kernel 1}$')] Assumption {\bf (A$_{\kernel 1}$)} holds and in
  addition
\item[] \ (i) \ $\Pi_\lambda\in L^\infty(\R^N)$; and 
\item[] \ (ii) \ $\lambda(x,\dd z)$ is locally shift-bounded: For
  some constant $C>0$,
$$
\lambda(x+h,B)\leq C \lambda(x,B)\quad\text{for all}\quad
x,h\in\R^N,\ |h|\leq1,\  \text{Borel }B\subset B(0,1)\setminus\{0\}.
$$
\vspace{-0.6cm}
\item[\bf (A$_{\kernel 1}$'')] Assumption {\bf (A$_{\kernel 1}$)} holds and in
  addition 
$$m\mu_\alp(\dd z)\leq \lambda(x,\dd z)\leq M\mu_\alp(\dd z)\qquad\text{where}\qquad
\mu_\alp(\dd z)=\frac{c_{N,\alpha}\dd z}{|z|^{N+\alp}},$$
for some $0<m\leq M$, $\alp\in(0,2)$, and every $x\in \R^N$.
\end{itemize}
The remaining assumptions we will use in this paper are given below.
\begin{itemize}
\smallskip
\item[\bf($\textup{A}_{\mu}$)]
$\mu\geq0$ is an even Radon measure on $\R^N\setminus\{0\}$ satisfying
$$\int_{|z|\leq1}|z|^2\,\mu(\dd z)+\int_{|z|>1}1\,\mu(\dd z)<\infty.$$
\item[\bf($\textup{A}_\varphi$)] $\varphi:\R\to\R$ is continuous and
    nondecreasing.
\smallskip
\item[\bf($\textup{A}_{u_0}$)] $u_0\in L^{1}(\mathbb{R}^N)\cap
  L^\infty(\R^N)$.
\end{itemize}

\begin{remark}\label{consequencesassumptions}
\begin{enumerate}[(a)]
\item By \textup{(A$_{\kernel 0}$)} and \textup{(A$_{\kernel 1}$)},
  $\mathcal{E}_\kernel[\cdot,\cdot]$ is well-defined on
  $C_{\textup{c}}^\infty(\R^N)$, nonnegative and symmetric,  
$$\mathcal{E}_\kernel[f,f]\geq
0\quad\text{and}\quad\mathcal{E}_\kernel[f,g]=\mathcal{E}_\kernel[g,f]\quad\text{for}\quad
f,g \in C_{\textup{c}}^\infty(\R^N).$$
Moreover, by Example 1.2.4 in \cite{FuOsTa94}, $(\mathcal{E}_\kernel,
C_{\textup{c}}^\infty(\R^N))$ is a closable Markovian form on
$L^2(\R^N)$ and its closure a regular Dirichlet form. 
\item It is easy to check that \textup{(A$_{\kernel 1}$'')} 
$\Rightarrow$ \textup{(A$_{\kernel1}$')} 
$\Rightarrow$ \textup{(A$_{\kernel 1}$)}, see also the
  remarks on locally shift-bounded kernels in Section
  \ref{sec:rem}. Assumption \textup{(A$_{\kernel 1}$'')} implies that
  $A^\kernel$ is comparable to $-(-\Delta)^{\frac\alp2}$, and local
  shift-boundedness in \textup{(A$_{\kernel 1}$')} is used to show
  that functions with finite energy can be approximated by test
  functions (cf. Theorem \ref{prop:characterizationofX}). 
\item By \textup{($\textup{A}_{\mu}$)}, the operator $\Levy^\mu$ defined by
  \eqref{deflevy} is well-defined on $C^2(\R^N)\cap L^\infty(\R^N)$,
  nonpositive and symmetric. The generator of any symmetric pure jump
  L\'evy process is included, like e.g. the fractional Laplacian and
  symmetric finite difference operators.
\item If $\kernel(x,\dd z)=\mu(\dd z)$, then  
$$\text{\textup{($\textup{A}_{\mu}$)}}
\qquad\Longrightarrow\qquad\text{\textup{(A$_{\lambda 0}$)}},\quad\text{\textup{(A$_{\lambda 1}$')}, \  and \  \textup{(A$_{\lambda 2}$)}}.$$
The first two trivially hold, while \textup{(A$_{\lambda
  2}$)} holds by e.g. Lemma
6.4 in \cite{DTEnJa15}. 
\item Without loss of generality we can assume $\varphi(0)=0$ (by adding a constant).
\end{enumerate}
\end{remark}

\subsection{Uniqueness results for energy solutions}
\label{sec:uniquenessenergysolns}

In this section we give several uniqueness results for energy (or weak)
solutions of \eqref{EE}--\eqref{EIC}. These results follow
from an extension of the Ole\u{\i}nik argument. 

\begin{definition}[Energy solutions]\label{energysoln}
A function $u\in
L_\textup{loc}^1(Q_T)$ is an energy solution of \eqref{EE}--\eqref{EIC} if
\begin{enumerate}[(i)]
\item $\varphi(u)\in L^2(0,T;E_{\kernel}(\R^N))$; and
\item for all $\psi\in C_\textup{c}^\infty(\R^N\times[0,T)),$
$$\int_{0}^T\left(\int_{\R^N}u\dell_t\psi\,\dd
    x-\mathcal{E}_\kernel[\varphi(u),\psi] \right)\dd t+
  \int_{\R^N}u_0(x)\psi(x,0)\,\dd x=0.$$
\end{enumerate}
\end{definition}

\begin{remark}
\begin{enumerate}[(a)]
\item The integrals in (ii) are well-defined by
\text{\textup{(A$_{\lambda 0}$)}}, \textup{(A$_{\kernel1}$')}, \textup{($\textup{A}_{u_0}$)}, and the 
regularity of $u$ and $\varphi(u)$. From (ii) it follows that the
initial condition $u_0$ is assumed in the distributional sense ($u_0$
is a weak initial trace): 
$$\underset{t\to0^+}{\textup{ess\,lim}}\int_{\R^N}u(x,t)\psi(x,t)\dd
x=\int_{\R^N}u_0(x)\psi(x,0)\dd x\quad\forall\psi\in
C_\textup{c}^\infty(\R^N\times[0,T)).$$ 
\item By the support of the test functions, we could take $L^2_{\mathrm{loc}}([0,T);E_{\kernel}(\R^N))$~in~(i).
\end{enumerate}
\end{remark}

To state the uniqueness results, we will introduce spaces in which the
Ole\u{\i}nik argument works. 
A particular requirement is that test functions are dense in these
spaces w.r.t. to the weakest convergence that can be used in the
proof. This is encoded in the following space:
\begin{align*}
X:=&\Big\{f\in L^\infty(Q_T)\cap L^2(0,T;E_\lambda(\R^N)):\\
&\quad\text{there exists }\{\psi_n\}_{n\in\N}\subset
C_{\textup{c}}^\infty(\R^N\times [0,T)) \text{ such that }\\
&\quad|\psi_n-f|_{T,E_\lambda}\to 0\quad\text{as}\quad \text{$n\to\infty$, and}\\
&\quad\iint_{Q_T}\psi_n \phi\,\dd x\dd t\to \iint_{Q_T}f \phi
 \,\dd x\dd t\quad\text{for all}\quad\phi\in L^1(Q_T)\quad\text{as}\quad n\to\infty\Big\}.
\end{align*}
Below we show that  
 limits can be avoided to get more useful
characterizations of such spaces if we (i) go to subspaces, e.g. 
\begin{align}\label{subsetX}
 X\cap L^2(Q_T)=L^2(Q_T)\cap L^\infty(Q_T)\cap L^2(0,T;E_\lambda(\R^N));
\end{align}
or (ii) restrict the operator by assuming
\textup{(A$_{\lambda 1}$'')} which implies 
\begin{align}\label{alphaX}X=L^\infty(Q_T)\cap L^2(0,T;E_{\mu_\alpha}(\R^N)).
\end{align}
We refer to Theorem \ref{prop:characterizationofX} below for precise statements.

Our most general uniqueness result applies to energy solutions in the
following class of functions:
\begin{equation*}
\begin{split}
\UC:=&\Big\{u \in L^1(Q_T)\cap
L^\infty(Q_T) : \varphi(u)\in  X\Big\}.
\end{split}
\end{equation*}

\begin{theorem}[Uniqueness 1]\label{generaluniquenessx}
Assume \textup{($\textup{A}_\varphi$)}, \text{\textup{(A$_{\lambda
      0}$)}}, \textup{(A$_{\lambda 1}$)}, and
\textup{($\textup{A}_{u_0}$)}. Then there is 
at most one {\em energy} solution $u$ 
of \eqref{EE}--\eqref{EIC} in $\UC$.
\end{theorem}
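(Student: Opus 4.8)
The plan is to run the Oleĭnik duality argument in the completed space $X$.  Let $u_1,u_2$ be two energy solutions in $\UC$, set $u=u_1-u_2$ and $w=\varphi(u_1)-\varphi(u_2)$.  Subtracting the two weak formulations gives, for every $\psi\in C_{\textup{c}}^\infty(\R^N\times[0,T))$,
\begin{align}\label{plan:subtract}
\int_0^T\Big(\int_{\R^N}u\,\dell_t\psi\,\dd x-\mathcal{E}_\kernel[w,\psi]\Big)\dd t=0,
\end{align}
since the initial data cancel.  Fix $0<\tau<T$ and a nonnegative $\chi\in C_{\textup{c}}^\infty(\R^N)$ (eventually $\chi\to 1$), and look for a test function of the backward-dual form $\psi(x,t)=\chi(x)\int_t^\tau w(x,s)\,\dd s$ on $[0,\tau]$.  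Formally plugging this in, the second term becomes $\int_0^\tau\mathcal{E}_\kernel[w(\cdot,t),\chi\int_t^\tau w(\cdot,s)\,\dd s]\,\dd t$, and by symmetry of $\mathcal{E}_\kernel$ and Fubini this is $\tfrac12\mathcal{E}_\kernel[\int_0^\tau w\,\dd t,\chi\int_0^\tau w\,\dd t]$-type quantity, i.e.\ nonnegative up to the localisation error coming from $\chi\neq 1$; the first term, after an integration by parts in $t$, reproduces $\int_0^\tau\int \chi\, u\, w\,\dd x\dd t\ge 0$ because $\varphi$ is nondecreasing, so $(\varphi(u_1)-\varphi(u_2))(u_1-u_2)\ge 0$ pointwise.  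Combining the two forces $\int_0^\tau\int_{\R^N}\chi\,(u_1-u_2)(\varphi(u_1)-\varphi(u_2))\,\dd x\,\dd t=0$ in the limit $\chi\uparrow 1$, hence $w=\varphi(u_1)=\varphi(u_2)$ a.e.; feeding this back into \eqref{plan:subtract} gives $\iint u\,\dell_t\psi=0$ for all $\psi$, whence $\dell_t u=0$ distributionally, and with the weak initial trace $u_0$ shared by both solutions we conclude $u_1=u_2$ a.e.\ in $Q_T$.

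The genuine obstacle, and the reason the statement is phrased with the class $X$, is that $\psi(x,t)=\chi(x)\int_t^\tau w\,\dd s$ is \emph{not} an admissible test function: it is merely $L^\infty\cap L^2(0,\tau;E_\kernel)$, not smooth and compactly supported.  This is exactly what the definition of $X$ is built to repair.  Since $w=\varphi(u_1)-\varphi(u_2)\in X-X$ by the hypothesis $\varphi(u_i)\in X$, the primitive $W(x,t):=\int_t^\tau w(x,s)\,\dd s$ inherits approximability: one first regularises in time by mollification (which does not increase the energy seminorm and converges in $|\cdot|_{T,E_\kernel}$), then uses the defining sequences $\{\psi_n^{(i)}\}\subset C_{\textup{c}}^\infty$ of $\varphi(u_i)$ to build $\psi_n(x,t):=\chi(x)\int_t^\tau(\psi_n^{(1)}-\psi_n^{(2)})(x,s)\,\dd s$, which lies in $C_{\textup{c}}^\infty(\R^N\times[0,T))$ (the $t$-support is inside $[0,\tau]\subset[0,T)$ and the $x$-support is controlled by $\chi$ together with the supports of the $\psi_n^{(i)}$, up to an extra cutoff), satisfies $|\psi_n(\cdot,t)-W(\cdot,t)\chi|_{E_\kernel}\to0$ in $L^2(0,\tau)$ by the energy convergence in the definition of $X$ plus the Cauchy–Schwarz inequality stated in the excerpt, and satisfies $\iint \psi_n\phi\to\iint W\chi\,\phi$ for $\phi\in L^1(Q_\tau)$ by the weak convergence in the definition of $X$ (applied to $\int_t^\tau(\cdot)\,\dd s$, which maps $L^1$ to $L^1$).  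Passing to the limit $n\to\infty$ in \eqref{plan:subtract} with $\psi=\psi_n$ is then legitimate: the energy term converges because $w\in L^2(0,\tau;E_\kernel)$ and $\psi_n\to W\chi$ in the same space (Cauchy–Schwarz), and the time-derivative term converges after the integration by parts $\int_0^\tau\int u\,\dell_t\psi_n=-\int_0^\tau\int u\,\chi(\psi_n^{(1)}-\psi_n^{(2)})+\text{(initial term)}\to -\int_0^\tau\int u\,\chi\, w$ using $u\in L^\infty$ and the weak-$L^1$ convergence.  So the crux is purely this density/approximation bookkeeping; the algebraic heart of the argument (symmetry of $\mathcal{E}_\kernel$, monotonicity of $\varphi$, Fubini in $t$) is short.

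A few points will need care but are routine.  \textbf{(1) Localisation in $x$.}  One cannot take $\chi\equiv1$ because test functions are compactly supported; instead pick $\chi=\chi_R$ with $\chi_R=1$ on $B_R$, $0\le\chi_R\le1$, $|D\chi_R|\lesssim R^{-1}$, and estimate the commutator error $\mathcal{E}_\kernel[w(\cdot,t),\chi_R W-\text{(product rule surrogate)}]$; using $w,W\in L^\infty\cap L^2(0,\tau;E_\kernel)$ together with $\Sigma_\kernel,\Pi_\kernel\in L^1_{\textup{loc}}$ from \textup{(A$_{\kernel1}$)} and dominated convergence, these errors vanish as $R\to\infty$, which is where \textup{(A$_{\kernel1}$)} (as opposed to the stronger primed versions) is exactly what is needed.  \textbf{(2) The sign of the time term.}  Writing $\dell_t\psi_n=-\chi_R(\psi_n^{(1)}-\psi_n^{(2)})+(\text{boundary at }t=0)$ and letting $n\to\infty$ recovers $-\iint\chi_R u\,w$; then $u\,w=(u_1-u_2)(\varphi(u_1)-\varphi(u_2))\ge0$ by \textup{($\textup{A}_\varphi$)}.  \textbf{(3) The energy term.}  By symmetry and Fubini, $\int_0^\tau\mathcal{E}_\kernel\big[w(\cdot,t),\chi_R\int_t^\tau w(\cdot,s)\,\dd s\big]\dd t$ equals, up to the localisation error from (1), $\tfrac12\,\big|\int_0^\tau w(\cdot,t)\,\dd t\big|_{E_\kernel}^2\ge 0$.  \textbf{(4) Concluding from $w\equiv0$.}  Once $\varphi(u_1)=\varphi(u_2)$ a.e., \eqref{plan:subtract} reads $\iint_{Q_T}u\,\dell_t\psi=0$ for all $\psi\in C_{\textup{c}}^\infty(\R^N\times[0,T))$, so $u$ is $t$-independent as a distribution; combined with the weak initial traces being equal to $u_0$ for both solutions, $u=u_1-u_2=0$ in $L^\infty$-a.e.\ sense on $Q_T$.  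I expect the approximation step — verifying that $\psi_n$ as constructed genuinely lies in $C_{\textup{c}}^\infty(\R^N\times[0,T))$ and that both convergences in the definition of $X$ survive the operations ``multiply by $\chi_R$'' and ``integrate in $t$ from $t$ to $\tau$'' — to be the part demanding the most attention, precisely because it is the role for which the abstract space $X$ was defined.
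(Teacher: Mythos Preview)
Your overall Ole\u{\i}nik strategy is correct and matches the paper's, but the spatial cutoff $\chi_R$ you introduce is both unnecessary and, as you deploy it, a genuine gap. The very purpose of the space $X$ is that the approximants $\phi_n\in C_\textup{c}^\infty(\R^N\times[0,T))$ of $\Phi:=\varphi(u_1)-\varphi(u_2)$ (note $X$ is a linear space, so $\Phi\in X$) are \emph{already} compactly supported in space; hence $\psi_n(x,t):=\int_t^T\phi_n(x,s)\,\dd s$ is an admissible test function with no cutoff whatsoever. The paper packages this as a separate lemma: for any $\phi\in X$ one may insert $\psi=\int_t^T\phi\,\dd s$ directly into the weak formulation, the limit passage using only (i) $|\psi_n-\psi|_{T,E_\lambda}\lesssim|\phi_n-\phi|_{T,E_\lambda}\to0$ (a one-line Jensen/Tonelli estimate) together with Cauchy--Schwarz for the energy term, and (ii) $u\in L^1(Q_T)$ together with the weak convergence built into $X$ for the time term. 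No product with $\chi_R$, no commutator, no localisation error.

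Your point \textbf{(1)} is where the gap sits. Under only \textup{(A$_{\lambda1}$)} you have $\Sigma_\lambda,\Pi_\lambda\in L^1_{\textup{loc}}$, nothing more. The commutator term you write schematically as $\mathcal{E}_\lambda[w,\chi_R W-(\cdots)]$ contains, after expanding the product, a piece of the type $\int\!\!\int (w(x+z)-w(x))\,W(x)\,(\chi_R(x+z)-\chi_R(x))\,\lambda(x,\dd z)\,\dd x$; dominated convergence would require an integrable majorant for $|w(x+z)-w(x)|$ against $\lambda(x,\dd z)\,\dd x$ over all of $\R^N\times\R^N$, which the energy seminorm alone does not provide (Cauchy--Schwarz throws you onto $\int\!\!\int 1\,\lambda(x,\dd z)\,\dd x=\infty$). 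Similarly, your claim that $\chi_R\!\int_t^\tau(\psi_n^{(1)}-\psi_n^{(2)})\to\chi_R W$ in $L^2(0,\tau;E_\lambda)$ requires a Leibniz-type estimate for $|\chi_R g|_{E_\lambda}$ which produces a term $\int|g(x)|^2\Gamma_\lambda[\chi_R](x)\,\dd x$ with $\Gamma_\lambda[\chi_R]$ only in $L^1_{\textup{loc}}$; without additional control on $g=g_n-g$ (e.g.\ uniform $L^2$ or compact support) this does not go to zero. These difficulties are precisely why the paper defines $X$ the way it does and then \emph{avoids} cutoffs altogether.

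Two minor remarks. First, in the limit of the time term you invoke ``$u\in L^\infty$''; what is actually used is $u\in L^1(Q_T)$, so that $u$ is a legitimate test in the weak-$*$ convergence defining $X$. Second, your concluding step (4) via ``$t$-independence plus shared initial trace'' is fine, but the paper's version is even shorter: once $\Phi=0$ a.e., take $\psi=\int_t^T\phi\,\dd s$ with $\phi\in C_\textup{c}^\infty(Q_T)$ in the subtracted equation to get $\iint U\phi=0$ for all such $\phi$, hence $U=0$ a.e.\ by du~Bois-Reymond. Drop the cutoff, and your argument becomes essentially the paper's.
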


A proof can be found in Section
\ref{sec:proofofgeneraluniquenessx}.

\begin{remark}
  A similar but less general uniqueness result is given by Theorem 1.1 in \cite{PaQuRo16}. They assume that $\lambda(x,\dd z)$ is absolutely
  continuous with a density comparable to the L\'evy
  measure of the $\alp$-stable process, and hence $A^\lambda$ is
  comparable to $-(-\Delta)^{\frac\alp2}$. In this case
  \textup{(A$_{\kernel1}$')} is satisfied
  in view of the discussion in
 Section \ref{sec:rem}.
\end{remark}
 
Note that in general the uniqueness class $\UC$ is smaller than the
natural existence class 
\begin{equation*}
\EC:=\Big\{u \in L^1(Q_T)\cap
L^\infty(Q_T) : \varphi(u)\in L^2(0,T;E_\kernel(\R^N))\Big\}.
\end{equation*}
This is an intrinsic problem with the Ole\u{\i}nik argument when it is
extended to such general settings as we consider here, and it is also
observed in \cite{PaQuRo16}. 
However, the two classes may coincide under additional assumptions,
e.g. if $\varphi(u)$ also belongs to $L^2\cap L^\infty$  or if
$A^\kernel$ is comparable to $-(-\Delta)^{\frac\alp2}$. This is a
consequence of the following result.

\begin{theorem}\label{prop:characterizationofX} Assume
  \text{\textup{(A$_{\lambda 0}$)}} and \textup{(A$_{\lambda 2}$)}.
\begin{enumerate}[(a)]
\item If \textup{(A$_{\lambda 1}$')} holds, then \eqref{subsetX} holds.
\item If \textup{(A$_{\lambda 1}$'')}  holds, then \eqref{alphaX} holds.
\end{enumerate}
\end{theorem}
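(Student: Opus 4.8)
The plan is to prove the two set equalities by establishing the two inclusions in each case, the nontrivial direction always being that a function with finite energy (plus the relevant integrability) can be approximated by test functions in the parabolic energy seminorm while simultaneously converging weakly against $L^1$ test functions. For part (a), the inclusion $X\cap L^2(Q_T)\subseteq L^2(Q_T)\cap L^\infty(Q_T)\cap L^2(0,T;E_\lambda(\R^N))$ is immediate from the definition of $X$, so the work is the reverse inclusion: given $f\in L^2(Q_T)\cap L^\infty(Q_T)\cap L^2(0,T;E_\lambda(\R^N))$, I must produce a sequence $\psi_n\in C^\infty_{\textup{c}}(\R^N\times[0,T))$ with $|\psi_n-f|_{T,E_\lambda}\to0$ and $\iint_{Q_T}\psi_n\phi\to\iint_{Q_T}f\phi$ for all $\phi\in L^1(Q_T)$. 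The second (weak) convergence is harmless: since the $\psi_n$ will be built by truncation–mollification of $f$, they stay bounded in $L^\infty$ and converge to $f$ a.e. and in $L^2_{\textup{loc}}$, which gives weak-$\ast$ convergence against $L^1$. The real content is the energy approximation, and here local shift-boundedness from \textup{(A$_{\lambda 1}$')}(ii) together with $\Pi_\lambda\in L^\infty$ from \textup{(A$_{\lambda 1}$')}(i) is exactly what makes the standard three-step scheme work: (1) truncate $f$ in space against a smooth cutoff $\chi_R(x)$ and control $|\,\chi_R f-f\,|_{E_\lambda}$ using symmetry \textup{(A$_{\lambda 2}$)} and the splitting of the energy into a "near" part (bounded by $\Sigma_\lambda\in L^1_{\textup{loc}}$ and the gradient of $\chi_R$) and a "far" part (bounded by $\Pi_\lambda\in L^\infty$ times $\|f\|_{L^2}$); (2) truncate in time so the support lies in $[0,T)$; (3) mollify in $x$ and $t$, using that convolution is a contraction for the energy seminorm when $\lambda$ is shift-bounded — concretely, $|\,f\ast_x\rho_\delta - f\,|_{E_\lambda}\le \int \rho_\delta(h)\,|\,f(\cdot-h)-f\,|_{E_\lambda}\,\dd h$ up to the shift-boundedness constant $C$, and $|\,f(\cdot-h)-f\,|_{E_\lambda}\to0$ as $|h|\to0$ by an $\veps/3$ argument approximating $f$ first by a smooth compactly supported function in the energy norm on a dense set. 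Assembling these by a diagonal argument yields the $\psi_n$.

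For part (b), under \textup{(A$_{\lambda 1}$'')} the energy $\mathcal{E}_\lambda$ is two-sided comparable to $\mathcal{E}_{\mu_\alpha}$ (the fractional-Laplacian energy of order $\alpha/2$), so $E_\lambda(\R^N)=E_{\mu_\alpha}(\R^N)$ with equivalent seminorms, and likewise for the parabolic versions; hence it suffices to prove $X=L^\infty(Q_T)\cap L^2(0,T;E_{\mu_\alpha}(\R^N))$ for the pure fractional case. Again $\subseteq$ is by definition (note $C^\infty_{\textup{c}}\subset L^\infty\cap L^2(0,T;E_{\mu_\alpha})$), and for $\supseteq$ I take $f$ in the right-hand side and must approximate it by test functions in $|\cdot|_{T,E_{\mu_\alpha}}$ with the same weak-$L^1$ convergence. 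The point of dropping $L^2(Q_T)$ here is that the homogeneous seminorm $|\cdot|_{E_{\mu_\alpha}}$ alone controls enough: by the homogeneous-Sobolev embedding $\dot H^{\alpha/2}(\R^N)\hookrightarrow L^{2N/(N-\alpha)}(\R^N)$ (recorded in the appendix, Appendices \ref{sec:homfracSob}), a function of finite $E_{\mu_\alpha}$-energy lies in $L^{2^\ast_\alpha}_{\textup{loc}}$, which is enough to run the same truncate–mollify scheme; translation invariance of $\mu_\alpha$ makes the mollification step an exact contraction, with no shift-boundedness constant needed, and the spatial-truncation estimate now uses the Hardy–type or fractional Leibniz bound $|\,\chi_R f - f\,|_{E_{\mu_\alpha}}\to0$, which is classical for $\dot H^{\alpha/2}$. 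The weak-$L^1$ convergence follows as before from a.e. convergence together with uniform $L^\infty$ bounds.

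The main obstacle I expect is the energy-seminorm continuity of translations and mollification in case (a): establishing $\sup_n|\,f\ast_x\rho_{\delta_n}-f\,|_{T,E_\lambda}\to0$ genuinely uses shift-boundedness \textup{(A$_{\lambda 1}$')}(ii) to move the translation from $\lambda(x,\dd z)$ onto $\lambda(x+h,\dd z)$ at a uniform cost, and one must be careful that the "cross" terms in expanding $\mathcal{E}_\lambda[\chi_R f - f,\ \chi_R f - f]$ are handled by Cauchy–Schwarz (Lemma \ref{C-S}) rather than pointwise, since $\lambda$ need not be absolutely continuous. A secondary technical point is keeping the time-truncation compatible with the open-at-$T$ support requirement while not destroying the energy convergence — this is routine since time enters the energy only through integration, so a fixed smooth cutoff $\eta_n(t)$ increasing to $\mathbf 1_{[0,T)}$ suffices by dominated convergence in $t$. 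Once these approximation lemmas are in place, both \eqref{subsetX} and \eqref{alphaX} follow by the diagonal construction of $\{\psi_n\}$.
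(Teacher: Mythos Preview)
Your plan differs substantially from the paper's argument, and as written it has genuine gaps.

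The central issue is your step (3): you claim $|f(\cdot-h)-f|_{T,E_\lambda}\to 0$ by an $\veps/3$ argument that first approximates $f$ by a $C^\infty_\textup{c}$ function \emph{in the energy norm}. But density of $C^\infty_\textup{c}$ in the energy norm is precisely the statement you are trying to prove, so this is circular. The paper avoids this entirely: it never proves strong energy convergence of its approximants $w_\delta$ to $f$. Instead (Appendix~\ref{sec:spaceX}) it only establishes a \emph{uniform} bound $|w_\delta|_{T,E_\lambda}\le K$ together with $\|w_\delta-f\|_{L^2(Q_T)}\to0$. Since $L^2(Q_T)\cap L^2(0,T;E_\lambda(\R^N))$ is a Hilbert space, the Banach--Saks theorem then yields a subsequence whose C\'esaro means $\phi_n:=\tfrac1n\sum_{k=1}^n w_{\delta_k}$ converge strongly in that Hilbert space to some $\tilde f$; the $L^2$-convergence identifies $\tilde f=f$, and the $\phi_n\in C^\infty_\textup{c}$ are the required approximants. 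This Banach--Saks step is the key idea you are missing.

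Your spatial-truncation step (1) also has a gap: controlling the cross term in $|\chi_R f-f|_{E_\lambda}$ produces (from the near part $|z|\le1$) a quantity dominated by $\int |f(x)|^2\Sigma_\lambda(x)\,\dd x$, but under \textup{(A$_{\lambda 1}$')} the function $\Sigma_\lambda$ is only in $L^1_\textup{loc}$, not $L^\infty$, so this integral need not be finite. The paper sidesteps multiplicative cutoffs altogether: it mollifies first, observes that the mollification $G_\delta$ lies in $C_0(\R^{N+1})$ (via $\mathcal F\{g_\delta\}\mathcal F\{\rho_\delta\}\in L^1$ and Riemann--Lebesgue, using crucially $f\in L^2$), and then obtains compact support by truncating the \emph{range} of $G_\delta$ with the Lipschitz map $T_\delta$, which is a normal contraction and hence does not increase the energy.

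For part (b), your appeal to $\dot H^{\alpha/2}\hookrightarrow L^{2N/(N-\alpha)}$ requires $N>\alpha$; when $N\le\alpha$ (i.e.\ $N=1$, $\alpha\in[1,2)$) this embedding fails, and the paper uses a separate recurrence-based argument (Appendix~\ref{sec:proofcomparablefracLap}, step 2). For $N>\alpha$ the paper again proceeds via uniform energy bounds plus Banach--Saks (Lemma~\ref{parahomfracSob}), not direct convergence.
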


The proofs are given in Appendices \ref{sec:spaceX} and
\ref{sec:proofcomparablefracLap} respectively. See also Section \ref{sec:rem} for a possible
alternative based on recurrence. By Theorem
\ref{generaluniquenessx} and Theorem \ref{prop:characterizationofX}, we now have:

\begin{corollary}[Uniqueness 2] 
Assume \textup{(A$_{\lambda0}$)}, \textup{(A$_{\lambda 2}$)}, \textup{($\textup{A}_\varphi$)}, and
  \textup{($\textup{A}_{u_0}$)} hold.
\begin{enumerate}[(a)]
\item If \textup{(A$_{\lambda 1}$')} holds, then there is at most one {\em energy} solution  $u$ of
  \eqref{EE}--\eqref{EIC} such that $u\in\EC$ and $\varphi(u)\in L^2(Q_T)$.
\item  If \textup{(A$_{\lambda 1}$'')} holds, then there
is at most one {\em energy} 
  solution  $u$ of \eqref{EE}--\eqref{EIC} such that $u\in\EC$.
\end{enumerate}
\end{corollary}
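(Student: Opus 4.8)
The plan is to deduce Corollary~\ref{cor:uniqueness2} (the ``Uniqueness 2'' statement) directly from Theorem~\ref{generaluniquenessx} and Theorem~\ref{prop:characterizationofX}, so the work is entirely in identifying the relevant function classes. Since \textup{(A$_{\lambda 0}$)}, \textup{(A$_{\lambda 1}$)} (which follows from either \textup{(A$_{\lambda 1}$')} or \textup{(A$_{\lambda 1}$'')}), \textup{(A$_{\lambda 2}$)}, \textup{($\textup{A}_\varphi$)}, and \textup{($\textup{A}_{u_0}$)} all hold in both cases, Theorem~\ref{generaluniquenessx} gives that \eqref{EE}--\eqref{EIC} has at most one energy solution in $\UC = \{u\in L^1(Q_T)\cap L^\infty(Q_T) : \varphi(u)\in X\}$. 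So it suffices to check that in each case the stated solution class is contained in $\UC$, i.e. that the side condition on $\varphi(u)$ forces $\varphi(u)\in X$.

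For part (a): a solution $u$ with $u\in\EC$ satisfies $u\in L^1(Q_T)\cap L^\infty(Q_T)$ and $\varphi(u)\in L^2(0,T;E_\lambda(\R^N))$; since $u\in L^\infty(Q_T)$ and $\varphi$ is continuous, $\varphi(u)\in L^\infty(Q_T)$ as well. The extra hypothesis $\varphi(u)\in L^2(Q_T)$ then places $\varphi(u)\in L^2(Q_T)\cap L^\infty(Q_T)\cap L^2(0,T;E_\lambda(\R^N))$, which by \eqref{subsetX} of Theorem~\ref{prop:characterizationofX}(a) (valid since \textup{(A$_{\lambda 0}$)}, \textup{(A$_{\lambda 2}$)}, \textup{(A$_{\lambda 1}$')} all hold) equals $X\cap L^2(Q_T)\subset X$. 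Hence $u\in\UC$ and Theorem~\ref{generaluniquenessx} applies. For part (b): under \textup{(A$_{\lambda 1}$'')}, equation \eqref{alphaX} of Theorem~\ref{prop:characterizationofX}(b) gives $X = L^\infty(Q_T)\cap L^2(0,T;E_{\mu_\alpha}(\R^N))$. Moreover \textup{(A$_{\lambda 1}$'')} states $m\mu_\alpha(\dd z)\le\lambda(x,\dd z)\le M\mu_\alpha(\dd z)$, so the energy seminorms are comparable, $m|f|_{E_{\mu_\alpha}}^2\le|f|_{E_\lambda}^2\le M|f|_{E_{\mu_\alpha}}^2$, whence $L^2(0,T;E_\lambda(\R^N)) = L^2(0,T;E_{\mu_\alpha}(\R^N))$ as sets. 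Therefore any $u\in\EC$ has $\varphi(u)\in L^\infty(Q_T)\cap L^2(0,T;E_\lambda(\R^N)) = L^\infty(Q_T)\cap L^2(0,T;E_{\mu_\alpha}(\R^N)) = X$, so again $u\in\UC$ and uniqueness follows from Theorem~\ref{generaluniquenessx}.

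There is essentially no obstacle here beyond bookkeeping: the real content has already been isolated in Theorem~\ref{generaluniquenessx} (the Ole\u{\i}nik argument) and in Theorem~\ref{prop:characterizationofX} (the density/characterization of $X$, proved in the appendices). The only points requiring a word of care are (i) verifying that $u\in L^\infty$ together with continuity of $\varphi$ indeed yields $\varphi(u)\in L^\infty$ — immediate since $\varphi$ maps bounded sets to bounded sets — and (ii) in part (b), that the two-sided bound on the measures transfers to an equivalence of the parabolic energy seminorms $|\cdot|_{T,E_\lambda}$ and $|\cdot|_{T,E_{\mu_\alpha}}$, which follows by integrating the pointwise (in $x$) comparison of integrands in \eqref{energy} and then in $t$. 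With these observations the two parts are one-line deductions, so the proof is short and the emphasis should be on citing the correct combination of assumptions feeding into each of the two main theorems.
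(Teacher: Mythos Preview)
Your proposal is correct and follows exactly the paper's approach: the paper simply states that the corollary follows ``By Theorem~\ref{generaluniquenessx} and Theorem~\ref{prop:characterizationofX}'', and you have accurately filled in the bookkeeping, including the observation in part~(b) that \textup{(A$_{\lambda 1}$'')} forces $L^2(0,T;E_\lambda(\R^N))=L^2(0,T;E_{\mu_\alpha}(\R^N))$ (which the paper records separately in Appendix~\ref{sec:proofcomparablefracLap}).
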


\begin{remark} When the operator $A^\lambda$ is comparable to the
  fractional Laplacian $-(-\Delta)^{\frac{\alpha}{2}}$ for
  $\alpha\in(0,2)$ (i.e. \textup{(A$_{\lambda 1}$'')} holds), 
the  uniqueness and existence classes coincide, and if $N>\alpha$ they satisfy
\begin{equation}\label{eq:exun}
\UC=\EC=\Big\{u \in L^1(Q_T)\cap
L^\infty(Q_T) : \varphi(u)\in L^2(0,T;\Hdot^{\frac{\alpha}{2}}(\R^N))\Big\}.
\end{equation}
The latter space is often used in the porous medium setting
\cite{Vaz07,PaQuRoVa12}, see also \cite{PaQuRo16}.
See Appendix \ref{sec:homfracSob} for rigorous definitions and
properties of the homogeneous fractional Sobolev spaces
$\Hdot^{\frac{\alpha}{2}}(\R^N)$ and
$L^2(0,T;\Hdot^{\frac{\alpha}{2}}(\R^N))$, some of these we were not able to find
in the literature.
\end{remark}

Note that if \textup{($\textup{A}_\varphi$)} holds
and $u\in L^\infty(Q_T)$, then $\varphi(u)\in L^\infty(Q_T)$. 
Now let $\beta\in(0,1]$ and assume $\varphi$ is locally $\beta$-H\"older continuous at $0$:
\begin{align}
\label{holder}
\sup_{|s|<R}\frac{|\varphi(s)-\varphi(0)|}{|s|^\beta}<\infty\qquad\text{for
all}\qquad
R>0.
\end{align}
Then, since $\varphi(0)=0$ and $u\in L^\infty(Q_T)$, 
$$ u\in L^{2\beta}(Q_T)\qquad\Longrightarrow\qquad \varphi(u)\in L^2(Q_T).$$
By interpolation, functions $u\in L^1\cap L^\infty$ belongs to $L^{2\beta}$ for
$\beta\in[\frac12,1]$. 
This leads us to our next result:

\begin{corollary}[Uniqueness 3]  Assume
  \textup{($\textup{A}_\varphi$)}, \text{\textup{(A$_{\lambda 0}$)}}, 
  \textup{(A$_{\lambda 1}$')}, \textup{(A$_{\lambda 2}$)}, and
  \textup{($\textup{A}_{u_0}$)} hold. If in addition \eqref{holder} holds for some
  $\beta\in[\frac12,1]$, then there is at most one {\em energy} solution
  $u$ of \eqref{EE}--\eqref{EIC} such that $u\in \EC$. 
\end{corollary}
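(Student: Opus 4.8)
The plan is to reduce the statement to Theorem~\ref{generaluniquenessx} by showing that, under the present hypotheses, every energy solution $u\in\EC$ of \eqref{EE}--\eqref{EIC} already lies in the smaller uniqueness class $\UC$. So suppose $u\in\EC$ is an energy solution: then $u\in L^1(Q_T)\cap L^\infty(Q_T)$, $\varphi(u)\in L^2(0,T;E_\kernel(\R^N))$, and $\varphi(u)\in L^\infty(Q_T)$ because $\varphi$ is continuous and $u$ is bounded. The only thing still needed is $\varphi(u)\in L^2(Q_T)$: once that is in hand, part~(a) of Theorem~\ref{prop:characterizationofX} (its hypotheses \textup{(A$_{\lambda 0}$)}, \textup{(A$_{\lambda 1}$')}, \textup{(A$_{\lambda 2}$)} being exactly what we assume) gives \eqref{subsetX}, hence $\varphi(u)\in X\cap L^2(Q_T)\subset X$, i.e.\ $u\in\UC$.

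To get $\varphi(u)\in L^2(Q_T)$ I would argue as in the paragraph preceding the corollary. Normalizing $\varphi(0)=0$ (Remark~\ref{consequencesassumptions}(e)) and writing $R:=\|u\|_{L^\infty(Q_T)}$, the local H\"older bound \eqref{holder} supplies a constant $C=C(R)$ with $|\varphi(s)|\le C|s|^\beta$ for $|s|\le R$, so that $|\varphi(u)|^2\le C^2|u|^{2\beta}$ a.e.\ on $Q_T$. Since $\beta\in[\frac12,1]$, the exponent $2\beta$ lies in $[1,2]$, and the elementary interpolation inequality $\|u\|_{L^{2\beta}(Q_T)}\le\|u\|_{L^1(Q_T)}^{1/(2\beta)}\|u\|_{L^\infty(Q_T)}^{1-1/(2\beta)}$ (valid because $u\in L^1(Q_T)\cap L^\infty(Q_T)$ and $2\beta\ge1$) shows $u\in L^{2\beta}(Q_T)$; hence $\varphi(u)\in L^2(Q_T)$.

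Finally, since \textup{(A$_{\lambda 1}$')} implies \textup{(A$_{\lambda 1}$)} (Remark~\ref{consequencesassumptions}(b)), all hypotheses of Theorem~\ref{generaluniquenessx} hold, so at most one energy solution lies in $\UC$; combined with the inclusion just established, at most one energy solution lies in $\EC$, which is the assertion.

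I do not expect a genuine obstacle here: the corollary is essentially a repackaging of Theorems~\ref{generaluniquenessx} and \ref{prop:characterizationofX}(a) with $L^1$--$L^\infty$ interpolation. The only point that needs attention is the role of the exponent range: the lower bound $\beta\ge\frac12$ is precisely what forces $2\beta\ge1$, so that interpolation from $L^1\cap L^\infty$ already reaches $L^{2\beta}(Q_T)$ (for smaller $\beta$ one would need extra integrability of $u$), whereas the strengthened kernel assumption \textup{(A$_{\lambda 1}$')} instead of merely \textup{(A$_{\lambda 1}$)} enters only through Theorem~\ref{prop:characterizationofX}(a) — that is where the real work, the density of test functions in the finite-energy class, has already been done.
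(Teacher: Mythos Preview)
Your proposal is correct and follows essentially the same approach as the paper: the paper's discussion immediately preceding the corollary establishes that the H\"older condition \eqref{holder} with $\beta\in[\tfrac12,1]$ and $L^1\cap L^\infty$ interpolation give $\varphi(u)\in L^2(Q_T)$, after which the result follows from Corollary~(Uniqueness~2)(a)---which is itself just Theorem~\ref{generaluniquenessx} combined with Theorem~\ref{prop:characterizationofX}(a), exactly as you unpack it.
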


Now we specialize to the case $\lambda(x,\dd z)=\mu(\dd z)$ and  $A^\kernel
= \Levy^\mu$. Equation \eqref{EE} then becomes equation \eqref{E}. From all the
above uniqueness results and Remark \ref{consequencesassumptions} (d) we obtain
the following uniqueness results for \eqref{E}--\eqref{IC}. 

\begin{corollary}[Uniqueness 4]\label{uniqueness}
Assume \textup{($\textup{A}_\varphi$)}, \textup{($\textup{A}_{\mu}$)}, and
\textup{($\textup{A}_{u_0}$)} hold.
\smallskip

\noindent(a) There is at most one {\em energy} solution $u$
of \eqref{E}--\eqref{IC} in $\UC$.

\smallskip
\noindent(b) There is at most one {\em energy} solution $u$
of \eqref{E}--\eqref{IC} such that $u\in\EC$ and $\varphi(u)\in
L^2(Q_T)$.
\smallskip

\noindent(c) If in addition \eqref{holder} holds for some
  $\beta\in[\frac12,1]$, then there is at most one {\em energy} solution
  $u$ of \eqref{E}--\eqref{IC} such that $u\in \EC$.
\end{corollary}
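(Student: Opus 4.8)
The plan is to obtain all three statements as immediate specializations of the uniqueness results already established for the $x$-dependent problem \eqref{EE}--\eqref{EIC}. The starting point is that taking $\lambda(x,\dd z)=\mu(\dd z)$ reduces the bilinear form \eqref{energy} to
$$\mathcal{E}_\mu[f,g]=\tfrac12\int_{\R^N}\int_{|z|>0}\big(f(x+z)-f(x)\big)\big(g(x+z)-g(x)\big)\,\mu(\dd z)\,\dd x,$$
which, by \eqref{defgen} together with \eqref{deflevy}, is exactly the form generated by $\Levy^\mu$; hence $A^\lambda=\Levy^\mu$, the problem \eqref{EE}--\eqref{EIC} coincides with \eqref{E}--\eqref{IC}, and Definition \ref{energysoln} becomes precisely the notion of energy solution for \eqref{E}--\eqref{IC}. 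So it suffices to verify that \textup{($\textup{A}_{\mu}$)} supplies the hypotheses on $\lambda$ required by Theorem \ref{generaluniquenessx} and its corollaries.

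This verification is the content of Remark \ref{consequencesassumptions}(d): when $\lambda(x,\dd z)=\mu(\dd z)$, \textup{($\textup{A}_{\mu}$)} implies \textup{(A$_{\lambda 0}$)}, \textup{(A$_{\lambda 1}$')}, and \textup{(A$_{\lambda 2}$)}. The first two are here almost trivial, since $\lambda$ is $x$-independent: $\Sigma_\lambda$ and $\Pi_\lambda$ are finite constants by the moment bound in \textup{($\textup{A}_{\mu}$)}, and local shift-boundedness holds with $C=1$; the symmetry \textup{(A$_{\lambda 2}$)} follows from $\mu$ being even (e.g. by Lemma 6.4 in \cite{DTEnJa15}). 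Moreover \textup{(A$_{\lambda 1}$')} $\Rightarrow$ \textup{(A$_{\lambda 1}$)} by Remark \ref{consequencesassumptions}(b).

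It then remains only to invoke the appropriate result in each case. For (a), the hypotheses \textup{($\textup{A}_\varphi$)}, \textup{(A$_{\lambda 0}$)}, \textup{(A$_{\lambda 1}$)}, and \textup{($\textup{A}_{u_0}$)} all hold, so Theorem \ref{generaluniquenessx} gives at most one energy solution of \eqref{E}--\eqref{IC} in $\UC$. For (b), we additionally have \textup{(A$_{\lambda 0}$)}, \textup{(A$_{\lambda 2}$)}, and \textup{(A$_{\lambda 1}$')}, so the Uniqueness~2 corollary, part~(a), applies and yields at most one energy solution with $u\in\EC$ and $\varphi(u)\in L^2(Q_T)$. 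For (c), assuming moreover \eqref{holder} for some $\beta\in[\tfrac12,1]$, the Uniqueness~3 corollary applies and yields at most one energy solution with $u\in\EC$. The substantive work is already done inside Theorem \ref{generaluniquenessx} and its corollaries; what remains here carries no real difficulty, the only points deserving care being the identification of \eqref{EE}--\eqref{EIC} with \eqref{E}--\eqref{IC} both as equations and as energy formulations, and the transfer of the symmetry assumption, for which evenness of $\mu$ is precisely what \textup{(A$_{\lambda 2}$)} requires.
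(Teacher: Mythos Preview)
Your proof is correct and follows essentially the same approach as the paper: the paper states (just before the corollary) that the result follows by specializing to $\lambda(x,\dd z)=\mu(\dd z)$, invoking Remark \ref{consequencesassumptions}(d) to verify \textup{(A$_{\lambda 0}$)}, \textup{(A$_{\lambda 1}$')}, \textup{(A$_{\lambda 2}$)}, and then applying the earlier uniqueness results. Your write-up is in fact more detailed than what the paper provides.
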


\subsection{Equivalence with distributional
  solutions and consequences}
\label{sec:equiv}
In this section we study the connection between distributional (or very weak)
solutions and energy (or weak) solutions. We focus on the
simpler case where $A^\kernel=\Levy^{\mu}$, and hence the measure
$\kernel(x,\dd z)=\mu(\dd z)$ is independent of $x$. In other words, we
consider the Cauchy problem \eqref{E}--\eqref{IC}.  In general,
$A^\kernel$ will have an additional drift/convection term compared to $\Levy^\mu$, see Section
\ref{sec:rem}. This gives rise to a nonlinear convection term
in the equation and the possibility that solutions develop shocks
(cf. e.g. \cite{CiJa11} and references therein). Whether this
happens or not here is not known and another reason to avoid
this  case now. 

We state an equivalence result for the two
solution concepts,  existence and uniqueness results for
distributional solutions with finite energy, and
then transport these results from distributional solutions to energy
solutions. The uniqueness results of the previous section are
transported in the opposite direction, and the different uniqueness
results are then compared. We also give quantitative
energy and related $L^p$-estimates for distributional solutions.

\begin{definition}[Distributional solutions]\label{distsol} 
A function $u\in L_\textup{loc}^1(Q_T)$ is a distributional solution
of \eqref{E}--\eqref{IC} if
$$\displaystyle\iint_{Q_T}\Big(u\dell_t \psi+\varphi(u)\Levy^\mu[\psi]\Big)\,\dd x\dd t+
  \int_{\R^N}u_0(x)\psi(x,0)\,\dd x=0 \text{ }\forall\psi\in
  C_\textup{c}^\infty(\R^N\times[0,T)).$$
\end{definition}
The integral is well-defined under the assumptions
\textup{($\textup{A}_\varphi$)}, \textup{($\textup{A}_{\mu}$)}, and \textup{($\textup{A}_{u_0}$)}
if also $\varphi(u)\in L^\infty$ (which is the case when $u\in
L^\infty$).
 This weaker notion of solutions does 
not require finite energy, but when the energy is finite, the two
notions of solutions will be equivalent.

\begin{theorem}[Equivalent notions of solutions]\label{equivalencedistrenergy}
Assume \textup{($\textup{A}_\varphi$)}, \textup{($\textup{A}_{\mu}$)}, \linebreak $u_0\in
L^\infty(\R^N)$, and $u\in L^\infty(Q_T)$. Then
the following statements are equivalent: 
\begin{enumerate}[(a)]
\item $u$ is an {\em energy} solution of \eqref{E}--\eqref{IC}.
\item $u$ is a {\em distributional} solution of \eqref{E}--\eqref{IC} such that 
$$\varphi(u)\in L^2(0,T;E_\mu(\R^N)).$$
\end{enumerate}
\end{theorem}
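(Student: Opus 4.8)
The plan is to establish the equivalence by showing that, under the hypothesis $\varphi(u)\in L^2(0,T;E_\mu(\R^N))$, the bilinear-form pairing and the distributional pairing against test functions coincide. The key identity is that for $\psi\in C_\textup{c}^\infty(\R^N\times[0,T))$ and for a.e. $t$,
\[
\mathcal{E}_\mu[\varphi(u(\cdot,t)),\psi(\cdot,t)]=-\int_{\R^N}\varphi(u(x,t))\,\Levy^\mu[\psi](x,t)\,\dd x,
\]
which is precisely the integration-by-parts relation \eqref{defgen} characterizing $A^\kernel=\Levy^\mu$ as the generator of $\mathcal{E}_\mu$; since $\psi(\cdot,t)\in C_\textup{c}^\infty(\R^N)$ and $\Levy^\mu[\psi](\cdot,t)\in L^\infty\cap L^1$ (by \textup{($\textup{A}_\mu$)}), while $\varphi(u(\cdot,t))\in L^\infty$ with finite $\mathcal{E}_\mu$-energy, both sides make sense. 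Granting this, Definition \ref{energysoln}(ii) becomes literally Definition \ref{distsol}, and condition (i) of Definition \ref{energysoln} is exactly the extra hypothesis in (b). So the proof reduces to: (a) $\Rightarrow$ (b) is immediate once we note an energy solution has $\varphi(u)\in L^2(0,T;E_\mu(\R^N))=L^2(0,T;E_\kernel(\R^N))$ and rewrite the equation via the identity; (b) $\Rightarrow$ (a) is the reverse rewriting.

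The steps, in order, are as follows. First I would record that under \textup{($\textup{A}_\mu$)}, Remark \ref{consequencesassumptions}(d) gives \textup{(A$_{\lambda 0}$)}, \textup{(A$_{\lambda 1}$')}, \textup{(A$_{\lambda 2}$)} with $\kernel(x,\dd z)=\mu(\dd z)$, so the energy form $\mathcal{E}_\mu$ in \eqref{energy} and the space $E_\mu(\R^N)$ are well-defined and the earlier structural results apply. Second, I would prove the pointwise-in-$t$ duality identity above. The clean way: for fixed $t$, approximate $\varphi(u(\cdot,t))$ in the $|\cdot|_{E_\mu}$-seminorm by functions $\psi_n\in C_\textup{c}^\infty(\R^N)$ that also converge in a suitable weak sense (this approximation is available because \textup{(A$_{\lambda 1}$')} holds — it is exactly what Theorem \ref{prop:characterizationofX} and the definition of $X$ provide, or one argues directly since $\mu$ is a genuine Lévy measure); for smooth compactly supported functions the identity \eqref{defgen} is classical (Fubini plus the change of variables $z\mapsto -z$ using evenness of $\mu$), and then one passes to the limit, using the Cauchy–Schwarz inequality for $\mathcal{E}_\mu$ on the left and the $L^1$–$L^\infty$ pairing (with $\Levy^\mu[\psi]\in L^1\cap L^\infty$) on the right. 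Third, I would integrate this identity in $t$ over $(0,T)$, legitimate because $t\mapsto\mathcal{E}_\mu[\varphi(u(\cdot,t)),\psi(\cdot,t)]$ is integrable by Cauchy–Schwarz and the finite parabolic energy $|\varphi(u)|_{T,E_\mu}<\infty$. Fourth, I would substitute into Definition \ref{energysoln}(ii) to land in Definition \ref{distsol}, and conversely, thereby closing both implications.

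I expect the main obstacle to be the rigorous justification of the pointwise duality identity $\mathcal{E}_\mu[\varphi(u(\cdot,t)),\psi(\cdot,t)]=-\int \varphi(u)\Levy^\mu[\psi]\,\dd x$ when $\varphi(u(\cdot,t))$ is merely $L^\infty$ with finite energy rather than a test function: one must control the double integral defining $\mathcal{E}_\mu$ near the diagonal (using $\Sigma_\mu<\infty$ and the $C^2$-regularity of $\psi$ to get the $|z|^2$ bound) and away from the diagonal (using $\Pi_\mu<\infty$ and $\psi\in L^\infty$), and then justify the reordering of integration that moves all differences onto $\psi$. The density/approximation route sidesteps most of this by reducing to the smooth case, but one must check that the approximation can be taken so that $\iint\psi_n\phi\to\iint(\varphi(u))\phi$ for the relevant $\phi$ (here $\phi=\Levy^\mu[\psi](\cdot,t)\in L^1$), which is built into the definition of $X$; alternatively, since $\mu$ is a bona fide Lévy measure, standard mollification-and-truncation gives such $\psi_n$ directly. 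A secondary, purely technical point is measurability in $t$ of $t\mapsto|\varphi(u(\cdot,t))|_{E_\mu}$ and of the pairing, which follows from Tonelli/Fubini applied to the (nonnegative, jointly measurable) integrand in \eqref{energy} and is implicit in the definition of $L^2(0,T;E_\mu(\R^N))$.
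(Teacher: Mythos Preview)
Your reduction of both implications to the duality identity $\mathcal{E}_\mu[v,\psi]=-\int_{\R^N} v\,\Levy^\mu[\psi]\,\dd x$ for $v=\varphi(u(\cdot,t))\in L^\infty\cap E_\mu$ and $\psi\in C_\textup{c}^\infty$ is exactly the paper's strategy (this identity is isolated as Proposition~\ref{bilinearenergy}), and once it is in hand the equivalence is indeed a one-line substitution in each direction.

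Where you diverge is in the proof of the identity itself, and your preferred route has a real gap. You propose to approximate $v$ by test functions in $|\cdot|_{E_\mu}$ together with weak-$*$ convergence in $L^\infty$, invoking Theorem~\ref{prop:characterizationofX} and the definition of $X$. But Theorem~\ref{prop:characterizationofX}(a) only yields density in $L^2\cap L^\infty\cap E_\mu$, and here $\varphi(u)$ is \emph{not} assumed to lie in $L^2$ (only $u\in L^\infty$). Your fallback ``standard mollification-and-truncation'' does not repair this: take $v\equiv1$, which is in $L^\infty\cap E_\mu$ with $|v|_{E_\mu}=0$ and arises from the legitimate solution $u\equiv c$ (the theorem assumes only $u_0,u\in L^\infty$, not $L^1$). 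If $\psi_n\in C_\textup{c}^\infty$ satisfied $|\psi_n-1|_{E_\mu}=|\psi_n|_{E_\mu}\to0$, then for $\mu=\mu_\alpha$ with $N>\alpha$ the embedding \eqref{emb} would force $\psi_n\to0$ in $L^{2N/(N-\alpha)}$, contradicting $\int\psi_n\phi\to\int\phi$ for $\phi\in L^1$. This is precisely the $\UC$-versus-$\EC$ obstruction the paper emphasizes after Theorem~\ref{prop:characterizationofX}.

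The paper sidesteps the density question entirely by truncating the \emph{measure} rather than the function: it first proves the identity for the finite measure $\mu_r=\mu\mathbf{1}_{|z|>r}$ and arbitrary $L^1$--$L^\infty$ pairs (Lemma~\ref{simpleStroock-Var}, via mollification of both $f$ and $g$), then lets $r\to0^+$ by dominated convergence, using $|v|_{E_\mu}<\infty$ on the bilinear side and the Taylor bound for $\psi$ on the operator side. Your own ``direct reordering of integration'' sketch in the obstacle paragraph is close to this and would also succeed; it is specifically the approximation-of-$v$ route that fails.
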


We prove this result in Section \ref{sec:equivnotionofsolutions}. In the setting of this paper, it turns out that there always exists
distributional solutions with finite energy.
\begin{theorem}[Existence 1]\label{ex_Dsol}
Assume \textup{($\textup{A}_\varphi$)}, \textup{($\textup{A}_{\mu}$)}, and
\textup{($\textup{A}_{u_0}$)}. Then there exists a \textup{distributional} solution $u$
of \eqref{E}--\eqref{IC} satisfying  
\begin{enumerate}[(i)]
\item $u\in L^1(Q_T)\cap L^\infty(Q_T)\cap 
C([0,T];L_\textup{loc}^1(\R^N))$; and
\item $\varphi(u)\in
L^2(0,T;E_\mu(\R^N)).$
\end{enumerate}
\end{theorem}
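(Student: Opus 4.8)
The plan is to construct the solution by vanishing-viscosity-type regularization: approximate the (possibly degenerate, possibly non-Lipschitz) nonlinearity $\varphi$ and the data so that a classical theory applies, derive uniform $L^1\cap L^\infty$ and energy bounds, and then pass to the limit. Concretely, first I would mollify the initial datum, $u_{0,\veps}=u_0*\rho_\veps\in C_{\textup{c}}^\infty$ (or at least smooth, bounded, integrable), and regularize the nonlinearity to $\varphi_\veps:=\varphi*\rho_\veps+\veps\,\mathrm{id}$ so that $\varphi_\veps$ is smooth, strictly increasing, and nondegenerate. For each $\veps$ the equation $\dell_t u-\Levy^\mu[\varphi_\veps(u)]=0$ is a nondegenerate nonlocal parabolic problem for which existence of a bounded solution $u_\veps$ in $C([0,T];L^1)\cap L^\infty(Q_T)$ is classical (or can be quoted from \cite{DTEnJa15}); since this paper cites \cite{DTEnJa15} freely, one may even take the distributional solution $u_\veps$ of the $\veps$-problem directly from there, together with its $L^1$-contraction and comparison properties.

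The second step is to extract uniform estimates. By the $L^1$-contraction and order-preserving properties of the nonlocal equation one gets $\|u_\veps(\cdot,t)\|_{L^1}\le\|u_{0,\veps}\|_{L^1}\le\|u_0\|_{L^1}$ and $\|u_\veps\|_{L^\infty(Q_T)}\le\|u_{0,\veps}\|_{L^\infty}\le\|u_0\|_{L^\infty}$, uniformly in $\veps$. The crucial bound is the energy estimate: multiplying the $\veps$-equation (formally) by $\varphi_\veps(u_\veps)$ and integrating gives, using \eqref{defgen} and the definition of $|\cdot|_{E_\mu}$,
\begin{equation*}
\int_{\R^N}\Psi_\veps(u_\veps(x,T))\,\dd x+\int_0^T|\varphi_\veps(u_\veps)(\cdot,t)|_{E_\mu}^2\,\dd t=\int_{\R^N}\Psi_\veps(u_{0,\veps}(x))\,\dd x,
\end{equation*}
where $\Psi_\veps$ is a primitive of $\varphi_\veps$ with $\Psi_\veps\ge0$; since $0\le\Psi_\veps(u_{0,\veps})\le C(\|u_0\|_{L^\infty})|u_{0,\veps}|$ is bounded in $L^1$, this yields a uniform bound $|\varphi_\veps(u_\veps)|_{T,E_\mu}\le C$. (This formal computation has to be justified, e.g. by doing it on the time-mollified equation or using the known energy inequality for the approximate problem in \cite{DTEnJa15}.) One also gets equicontinuity in time in $L^1_{\textup{loc}}$ either from a standard translation estimate using the equation against test functions, or again by quoting the time-continuity statement for the $\veps$-problem.

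Third, I would pass to the limit $\veps\to0$. The uniform $L^1\cap L^\infty$ bound plus time-equicontinuity (and, if needed, an $L^1$ space-translation estimate, which also follows from $L^1$-contraction applied to spatial shifts since $\Levy^\mu$ commutes with translations) gives, via an Aubin--Lions / Fr\'echet--Kolmogorov argument, a subsequence $u_\veps\to u$ in $C([0,T];L^1_{\textup{loc}})$ and a.e. in $Q_T$, with $u\in L^1(Q_T)\cap L^\infty(Q_T)\cap C([0,T];L^1_{\textup{loc}})$. By continuity of $\varphi$ and dominated convergence, $\varphi_\veps(u_\veps)\to\varphi(u)$ in $L^p(Q_T)$ for all $p<\infty$; combined with the uniform energy bound and weak lower semicontinuity of the energy seminorm (together with the fact that $L^2(0,T;E_\mu(\R^N))$ is complete and the weak limit of a bounded sequence stays in it — here one uses that $|\cdot|_{T,E_\mu}$ is a Hilbert seminorm, so bounded sets are weakly sequentially precompact modulo the kernel), one concludes $\varphi(u)\in L^2(0,T;E_\mu(\R^N))$. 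Finally, passing to the limit in the distributional formulation of Definition \ref{distsol} for $u_\veps$ is straightforward: $\iint u_\veps\dell_t\psi\to\iint u\dell_t\psi$ by $L^1_{\textup{loc}}$ convergence, $\iint\varphi_\veps(u_\veps)\Levy^\mu[\psi]\to\iint\varphi(u)\Levy^\mu[\psi]$ since $\Levy^\mu[\psi]\in L^1\cap L^\infty$ (bounded and decaying, by \eqref{deflevy} and (A$_\mu$)) and $\varphi_\veps(u_\veps)\to\varphi(u)$ in $L^1_{\textup{loc}}$ while staying uniformly bounded, and $\int u_{0,\veps}\psi(\cdot,0)\to\int u_0\psi(\cdot,0)$. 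This gives a distributional solution with properties (i)--(ii).

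\textbf{Main obstacle.} The delicate point is making the energy estimate rigorous for the approximate problems and then upgrading the uniform energy bound to the statement $\varphi(u)\in L^2(0,T;E_\mu(\R^N))$ for the limit. The formal multiplication by $\varphi_\veps(u_\veps)$ needs either a genuine regularity theory for the nondegenerate $\veps$-problem (so that $u_\veps$ is smooth enough to test the equation against $\varphi_\veps(u_\veps)$), or an independent derivation of the energy inequality at the approximate level — this is exactly the kind of estimate one expects \cite{DTEnJa15} to supply, so I would lean on that. The passage to the limit in the energy term is then a soft functional-analytic argument (weak compactness in the Hilbert seminorm plus lower semicontinuity, plus identification of the weak limit with $\varphi(u)$ using a.e. convergence), but it must be stated carefully because $E_\mu(\R^N)$ is only a seminormed space and one is working with the parabolic version $L^2(0,T;E_\mu(\R^N))$; the completeness and Cauchy--Schwarz facts recorded in the Notation section and Lemma \ref{C-S} are what make this step legitimate.
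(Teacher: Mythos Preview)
Your overall strategy---approximate, derive uniform $L^1\cap L^\infty$ and energy bounds, extract a compact limit, pass to the limit---matches the paper's. But there is a real gap precisely where you flag the ``main obstacle'': justifying the energy identity at the approximate level. You regularize only $\varphi$ (and the data), keeping the full operator $\Levy^\mu$. For such an $\veps$-problem, the solution supplied by \cite{DTEnJa15} is merely a distributional solution in $L^1\cap L^\infty\cap C([0,T];L^1_{\textup{loc}})$; there is no a~priori reason that $\varphi_\veps(u_\veps)$ is an admissible test function, and \cite{DTEnJa15} does \emph{not} contain an energy inequality you can quote---that is exactly what this paper is establishing. So ``lean on \cite{DTEnJa15}'' does not close the gap.

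The paper's missing ingredient is a second layer of approximation: it also truncates the L\'evy measure, $\mu\mapsto\mu_r=\mu\mathbf{1}_{|z|>r}$. Then $\Levy^{\mu_r}$ is a bounded operator on $L^1\cap L^\infty$, so the distributional equation upgrades to an a.e.\ identity with $\partial_t u_{r,n}\in L^1\cap L^\infty$ (Lemma~\ref{timeregularityapproxE}), and one can legitimately multiply by $\varphi_n(u_{r,n})$ (with a spatial cutoff), integrate, and use the elementary identity $\int g\,\Levy^{\mu_r}[f]=-\mathcal E_{\mu_r}[f,g]$ for finite measures (Lemma~\ref{simpleStroock-Var}) to obtain the exact energy equality of Corollary~\ref{energyapproxproblem}. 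Compactness is then imported from \cite{DTEnJa15} (Theorem~\ref{convapproxtoE}), and the energy bound passes to the limit by two applications of Fatou's lemma (first $r_j,n_j\to\infty$ at fixed truncation level $R$, then $R\to0^+$), which is simpler than your weak-compactness/lower-semicontinuity route and avoids worrying about the seminormed structure of $E_\mu$.
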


This is one of the main results of this paper and will be proven at the end of Section \ref{sec:approxEconv}. For such solutions we have a
new uniqueness result by  
equivalence, Theorem \ref{equivalencedistrenergy}, and the uniqueness
result for energy solutions in Corollary \ref{uniqueness}.
\begin{corollary}[Uniqueness 5]\label{uniqueness_d}
Assume \textup{($\textup{A}_\varphi$)}, \textup{($\textup{A}_{\mu}$)}, and
\textup{($\textup{A}_{u_0}$)} hold.
\smallskip

\noindent(a) There is at most one {\em distributional} solution $u$ of \eqref{E}--\eqref{IC} in $\UC$.

\smallskip
\noindent(b) There is at most one {\em distributional} solution $u$
of \eqref{E}--\eqref{IC} such that $u\in\EC$ and $\varphi(u)\in
L^2(Q_T)$.
\smallskip

\noindent(c) If in addition \eqref{holder} holds for some
  $\beta\in[\frac12,1]$, then there is at most one {\em distributional} solution
  $u$ of \eqref{E}--\eqref{IC} such that $u\in \EC$.
\end{corollary}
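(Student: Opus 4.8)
The plan is to transfer Corollary \ref{uniqueness} (Uniqueness 4), which concerns energy solutions, to distributional solutions via the equivalence Theorem \ref{equivalencedistrenergy}. The key observation is that all three cases (a), (b), (c) describe classes of distributional solutions $u$ for which $\varphi(u)\in L^2(0,T;E_\mu(\R^N))$ is automatic: in case (a) this is part of the definition of $\UC$ (since $\varphi(u)\in X\subset L^2(0,T;E_\lambda(\R^N))$); in case (b) it is explicitly assumed; and in case (c) it follows from the H\"older bound \eqref{holder} with $\beta\in[\frac12,1]$ together with $u\in L^1(Q_T)\cap L^\infty(Q_T)$, interpolation to get $u\in L^{2\beta}(Q_T)$, and $\varphi(0)=0$, which give $\varphi(u)\in L^2(Q_T)$ — but one also needs $\varphi(u)\in L^2(0,T;E_\mu(\R^N))$, which is exactly the defining condition of $\EC$ in this setting.

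First I would fix one of the three cases and let $u$ be a distributional solution of \eqref{E}--\eqref{IC} in the corresponding class. Since $u\in L^\infty(Q_T)$ (being in $\UC$ or $\EC$, both of which are subsets of $L^1(Q_T)\cap L^\infty(Q_T)$) and $u_0\in L^\infty(\R^N)$ by \textup{($\textup{A}_{u_0}$)}, and since $\varphi(u)\in L^2(0,T;E_\mu(\R^N))$ as noted above, the hypotheses of Theorem \ref{equivalencedistrenergy} are met. Applying the implication (b)$\Rightarrow$(a) of that theorem, $u$ is an \emph{energy} solution of \eqref{E}--\eqref{IC}. Moreover $u$ still lies in the same class ($\UC$ in case (a), or $\EC$ with the relevant additional property in cases (b), (c)), since that class was defined purely in terms of the integrability/energy properties of $u$ and $\varphi(u)$, which are unchanged by the relabeling of the solution concept.

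Next I would invoke Corollary \ref{uniqueness}: in case (a), part (a) of that corollary gives at most one energy solution in $\UC$; in case (b), part (b) gives at most one energy solution with $u\in\EC$ and $\varphi(u)\in L^2(Q_T)$; in case (c), part (c) gives at most one energy solution with $u\in\EC$ under \eqref{holder}. Hence if $u_1,u_2$ are two distributional solutions in the stated class, both are energy solutions in that class, so $u_1=u_2$. This closes the argument in each case.

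I do not expect any serious obstacle here: the proof is essentially a bookkeeping exercise chaining together the equivalence theorem and the already-established energy uniqueness. The one point requiring a little care is case (c): one must check that the class ``$u\in\EC$ and \eqref{holder} holds'' does in fact guarantee $\varphi(u)\in L^2(0,T;E_\mu(\R^N))$ (immediate, since $\EC$ in the $\lambda(x,\dd z)=\mu(\dd z)$ setting is defined by exactly this) \emph{and} that the auxiliary $L^2(Q_T)$-integrability of $\varphi(u)$ used inside Corollary \ref{uniqueness}(c) is available — but this was already recorded in the paragraph preceding Corollary (Uniqueness 3) via interpolation, so no new work is needed. The only genuine input is Theorem \ref{equivalencedistrenergy}, whose proof is deferred to Section \ref{sec:equivnotionofsolutions}.
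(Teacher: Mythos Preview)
Your proposal is correct and follows exactly the approach the paper takes: the corollary is stated as an immediate consequence of the equivalence Theorem \ref{equivalencedistrenergy} together with Corollary \ref{uniqueness}. Your extra care in verifying that $\varphi(u)\in L^2(0,T;E_\mu(\R^N))$ holds in each case so that Theorem \ref{equivalencedistrenergy} applies is entirely appropriate, though the paper leaves this implicit.
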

Note that we have uniqueness in a smaller class than we have existence for
by Theorem \ref{ex_Dsol}.
This uniqueness result should also be compared to our recent general
uniqueness result from \cite{DTEnJa15}. 
\begin{theorem}[Uniqueness 6, Theorem 2.8 in \cite{DTEnJa15}]\label{uniqD}
Assume \textup{($\textup{A}_\varphi$)}, \textup{($\textup{A}_{\mu}$)}, and
\textup{($\textup{A}_{u_0}$)}. Then there is at most one \textup{distributional} solution $u$
of \eqref{E}--\eqref{IC} satisfying  
$$u\in L^1(Q_T)\cap L^\infty(Q_T)\cap 
C([0,T];L_\textup{loc}^1(\R^N)).$$
\end{theorem}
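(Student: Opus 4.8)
\begin{proof*}
Since finite energy is not assumed here, neither Theorem~\ref{equivalencedistrenergy} nor the uniqueness results for energy solutions apply, and the plan is to adapt the duality (resolvent) argument of Br\'ezis and Crandall \cite{BrCr79}. Let $u$ and $\hat u$ be two distributional solutions of \eqref{E}--\eqref{IC} with the same datum $u_0$, both in $L^1(Q_T)\cap L^\infty(Q_T)\cap C([0,T];L_\textup{loc}^1(\R^N))$, and set $w:=u-\hat u$. As $\varphi$ is continuous and nondecreasing and $u,\hat u\in L^\infty$, we may write $\varphi(u)-\varphi(\hat u)=a\,w$ with $a:=\frac{\varphi(u)-\varphi(\hat u)}{u-\hat u}$ on $\{u\ne\hat u\}$ and $a:=0$ otherwise, so that $0\le a\le L$ a.e.\ for some finite $L$. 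Subtracting the two distributional identities (the $u_0$-terms cancel), $w$ solves the \emph{linear} nonlocal equation
$$\iint_{Q_T}\big(w\,\dell_t\psi+a\,w\,\Levy^\mu[\psi]\big)\,\dd x\dd t=0\qquad\text{for all }\psi\in C_\textup{c}^\infty(\R^N\times[0,T))$$
with zero initial trace. It then suffices to show $\int_{\R^N}w(x,\tau)\phi(x)\,\dd x\le0$ for every $\tau\in(0,T)$ and every $0\le\phi\in C_\textup{c}^\infty(\R^N)$; interchanging $u$ and $\hat u$ gives the reverse inequality, hence $w(\cdot,\tau)=0$ in $L_\textup{loc}^1(\R^N)$ for every $\tau$. (It is exactly the assumed $C([0,T];L_\textup{loc}^1)$-regularity that makes the trace $\int_{\R^N}w(\cdot,\tau)\phi$ meaningful.)

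The idea is to test the linear identity against a function nearly solving the backward dual problem $\dell_t\psi+a\,\Levy^\mu[\psi]=0$ on $Q_\tau$ with terminal datum $\psi(\cdot,\tau)=\phi$. Since $a$ is only bounded and nonnegative this problem is degenerate, so I would replace $a$ by a regularization $a_\veps\in C^\infty$ with $\veps\le a_\veps\le L+1$ and $a_\veps\to a$ in $L^2_\textup{loc}(Q_\tau)$ (mollify at a scale $\delta$, then add $\veps$; the coupling $\delta=\delta(\veps)$ is fixed at the very end). The regularized problem $\dell_t\psi+a_\veps\,\Levy^\mu[\psi]=0$, $\psi(\cdot,\tau)=\phi$, is now nondegenerate and well posed; its solution $\psi_\veps$ may be obtained from the Dirichlet-form/semigroup theory attached to $\Levy^\mu$, or --- following \cite{BrCr79} --- by replacing $\dell_t$ with an implicit Euler difference quotient, so that $\psi_\veps$ is assembled from iterated resolvents $(I-\Dt\,a_\veps\Levy^\mu)^{-1}$ as $\Dt\to0$. (If convenient, one may first truncate $\mu$ near the origin so that $\Levy^\mu$ becomes a bounded operator, and remove the truncation at the end.) After multiplying $\psi_\veps$ by a cutoff vanishing near $t=0$ and mollifying in $t$, it becomes an admissible test function, up to an error vanishing with the $t$-mollification.

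Two bounds on $\psi_\veps$, \emph{uniform in} $\veps$, drive the proof. First, by the maximum principle for the linear equation (the Markovian structure of $\mathcal{E}_\mu$; cf.\ Remark~\ref{consequencesassumptions}) and $a_\veps\ge0$, one has $0\le\psi_\veps\le\|\phi\|_{L^\infty}$. Second, testing $\dell_t\psi_\veps+a_\veps\Levy^\mu[\psi_\veps]=0$ against $\Levy^\mu[\psi_\veps]$ and using the symmetry identity $\int_{\R^N}f\,\Levy^\mu[g]\,\dd x=-\mathcal{E}_\mu[f,g]$ from \eqref{defgen} gives $\dell_t\,\mathcal{E}_\mu[\psi_\veps(\cdot,t),\psi_\veps(\cdot,t)]=2\int_{\R^N}a_\veps(\Levy^\mu[\psi_\veps])^2\,\dd x\ge0$, whence
$$\int_0^\tau\!\!\int_{\R^N}a_\veps\,(\Levy^\mu[\psi_\veps])^2\,\dd x\dd t\ \le\ \tfrac12\,\mathcal{E}_\mu[\phi,\phi]=:C_\phi\qquad\text{and}\qquad\sup_{t\in[0,\tau]}\mathcal{E}_\mu[\psi_\veps(\cdot,t),\psi_\veps(\cdot,t)]\le 2C_\phi.$$
Inserting $\psi_\veps$ into the linear identity for $w$, using $\dell_t\psi_\veps+a_\veps\Levy^\mu[\psi_\veps]=0$ and the terminal value $\phi$, and absorbing an $o(1)$ from the $t$-mollification, one obtains
$$\int_{\R^N}w(x,\tau)\phi(x)\,\dd x=\iint_{Q_\tau}w\,(a-a_\veps)\,\Levy^\mu[\psi_\veps]\,\dd x\dd t+o(1),$$
and Cauchy--Schwarz against the energy bound gives
$$\Big|\iint_{Q_\tau}w\,(a-a_\veps)\,\Levy^\mu[\psi_\veps]\,\dd x\dd t\Big|\le\|w\|_{L^\infty}\Big(\iint_{Q_\tau}\frac{(a-a_\veps)^2}{a_\veps}\,\dd x\dd t\Big)^{1/2}C_\phi^{1/2}.$$
Choosing $\delta=\delta(\veps)$ so small that $\veps^{-1}\|a-a_\veps\|_{L^2(Q_\tau)}^2\to0$ as $\veps\to0$ sends the right-hand side to $0$, which yields $\int_{\R^N}w(\cdot,\tau)\phi\le0$ and hence $u=\hat u$.

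The main difficulty is exactly this last balancing: the coefficient $a$ vanishes on $\{u=\hat u\}$, which is typically large (in particular $u$ and $\hat u$, being $L^1$, nearly agree near infinity), so the dual problem is genuinely degenerate and $(a-a_\veps)^2/a_\veps$ is singular wherever $a_\veps\approx\veps$; moreover $\R^N$ has infinite measure, so $\|a-a_\veps\|_{L^2(Q_\tau)}$ need not be finite. I would handle this by localizing: perform the balancing on a large but finite spatial box and control the complement through a decay/tail estimate on $\psi_\veps$ and $\Levy^\mu[\psi_\veps]$ --- or, equivalently, add a vanishing zeroth-order term $-\eta\,\psi$ to the dual equation to force global integrability and let $\eta\to0$ last. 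A secondary difficulty, absent in the local setting of \cite{BrCr79}, is the generality of $\Levy^\mu$: well-posedness and interior regularity of the dual linear equation cannot be quoted off the shelf but must be established (uniformly in any truncation of $\mu$), and throughout, the formal integrations by parts and the identity $\int f\,\Levy^\mu[g]=-\mathcal{E}_\mu[f,g]$ for functions of merely finite energy have to be justified via the Dirichlet-form structure and Lemma~\ref{C-S}.
\end{proof*}
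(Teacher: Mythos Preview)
This theorem is not proved in the present paper; it is quoted as Theorem~2.8 of \cite{DTEnJa15}, and the introduction states that the argument there is ``based on a complicated resolvent approximation procedure of Br\'ezis and Crandall \cite{BrCr79}.'' So there is no in-paper proof to compare against --- the correct action here is simply to cite \cite{DTEnJa15}.

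That said, your sketch contains a genuine error and is \emph{not} the Br\'ezis--Crandall resolvent method. Your first step asserts $0\le a\le L$ a.e.\ for some finite $L$, but under \textup{($\textup{A}_\varphi$)} the nonlinearity $\varphi$ is only continuous and nondecreasing, not locally Lipschitz. For fast-diffusion type nonlinearities such as $\varphi(s)=|s|^{m-1}s$ with $m\in(0,1)$ (explicitly covered by the paper), the difference quotient $a=(\varphi(u)-\varphi(\hat u))/(u-\hat u)$ is unbounded near $\{u=\hat u=0\}$, so your regularization $\veps\le a_\veps\le L+1$ and the well-posedness of the backward dual problem collapse. What you have written is a Holmgren-type duality argument, which needs a bounded coefficient. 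The actual Br\'ezis--Crandall route avoids the coefficient $a$ entirely: one tests the difference equation against $B_\veps w$ with $B_\veps=(I-\veps\Levy^\mu)^{-1}$ and shows directly that $t\mapsto\int_{\R^N}w\,B_\veps w\,\dd x$ is nonincreasing, using only monotonicity of $\varphi$ and properties of the resolvent. This is why the method handles merely continuous nondecreasing $\varphi$.

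Even if $\varphi$ were Lipschitz, your closing paragraph candidly lists several unresolved obstacles --- the singular quotient $(a-a_\veps)^2/a_\veps$ on an infinite-measure domain, tail control of $\psi_\veps$ and $\Levy^\mu[\psi_\veps]$, and well-posedness/regularity of the nonlocal dual problem uniformly in truncations of $\mu$ --- each of which is substantial in the generality of \textup{($\textup{A}_\mu$)}. These are precisely the technical difficulties that make the proof in \cite{DTEnJa15} ``complicated,'' and they are not addressed here.
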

In particular, any solution from Theorem \ref{ex_Dsol} is
unique. This result is more general than Corollary
\ref{uniqueness_d}, but the proof is also more complicated. When
Corollary \ref{uniqueness_d} applies, a greatly simplified uniqueness
argument is available (as we have seen).

 In view of the equivalence in Theorem \ref{equivalencedistrenergy},
we can also transport results in the other direction: from
distributional solutions to energy solutions. First we obtain a new
existence result as an immediate consequence of Theorem 
\ref{ex_Dsol}. 
\begin{corollary}[Existence 2]\label{ex_energy}
Assume \textup{($\textup{A}_\varphi$)}, \textup{($\textup{A}_{\mu}$)}, and
\textup{($\textup{A}_{u_0}$)}. Then there exists an \textup{energy} solution $u$
of \eqref{E}--\eqref{IC} satisfying  
$$u\in L^1(Q_T)\cap L^\infty(Q_T)\cap 
C([0,T];L_\textup{loc}^1(\R^N)).$$
\end{corollary}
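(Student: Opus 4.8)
The plan is to obtain Corollary~\ref{ex_energy} as an immediate consequence of the two results that directly precede it, namely the existence of a distributional solution with finite energy (Theorem~\ref{ex_Dsol}) and the equivalence between distributional and energy solutions for finite-energy data (Theorem~\ref{equivalencedistrenergy}). No new estimates are needed; the work has all been done in the preceding theorems, and the proof is purely a matter of threading the hypotheses through correctly.

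\begin{proof*}
Assume \textup{($\textup{A}_\varphi$)}, \textup{($\textup{A}_{\mu}$)}, and \textup{($\textup{A}_{u_0}$)}. By Theorem~\ref{ex_Dsol} there exists a distributional solution $u$ of \eqref{E}--\eqref{IC} such that
$$u\in L^1(Q_T)\cap L^\infty(Q_T)\cap C([0,T];L_\textup{loc}^1(\R^N))\qquad\text{and}\qquad \varphi(u)\in L^2(0,T;E_\mu(\R^N)).$$
In particular $u\in L^\infty(Q_T)$ and, by \textup{($\textup{A}_{u_0}$)}, $u_0\in L^\infty(\R^N)$, so the hypotheses of Theorem~\ref{equivalencedistrenergy} are satisfied. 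Since $u$ is a distributional solution of \eqref{E}--\eqref{IC} with $\varphi(u)\in L^2(0,T;E_\mu(\R^N))$, statement (b) of Theorem~\ref{equivalencedistrenergy} holds, and hence so does statement (a): $u$ is an energy solution of \eqref{E}--\eqref{IC}. This $u$ retains the regularity $u\in L^1(Q_T)\cap L^\infty(Q_T)\cap C([0,T];L_\textup{loc}^1(\R^N))$ from Theorem~\ref{ex_Dsol}, which is exactly the asserted conclusion.
\end{proof*}

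The only point requiring any care — and the closest thing to an obstacle — is checking that the hypotheses match up: Theorem~\ref{equivalencedistrenergy} is stated under \textup{($\textup{A}_\varphi$)}, \textup{($\textup{A}_{\mu}$)}, $u_0\in L^\infty(\R^N)$, and $u\in L^\infty(Q_T)$, all of which are furnished by \textup{($\textup{A}_{u_0}$)} together with the $L^\infty$-bound in Theorem~\ref{ex_Dsol}~(i); and the finite-energy condition $\varphi(u)\in L^2(0,T;E_\mu(\R^N))$ needed for direction (b)$\Rightarrow$(a) is precisely Theorem~\ref{ex_Dsol}~(ii). There is genuinely nothing else to prove, which is the whole point of setting up the equivalence in Theorem~\ref{equivalencedistrenergy}.
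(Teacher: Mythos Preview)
Your proof is correct and follows exactly the approach indicated in the paper: the corollary is stated there as ``an immediate consequence of Theorem~\ref{ex_Dsol}'' via the equivalence in Theorem~\ref{equivalencedistrenergy}, and you have spelled out precisely that implication with the hypothesis-checking made explicit.
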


In the case of {\em $x$-independent}
  operators, this existence 
  result is much more general than the result given in Theorem 1.1 in
  \cite{PaQuRo16}.  
Uniqueness results for energy solutions of \eqref{E}--\eqref{IC} are
given in Corollary \ref{uniqueness}. 
These results hold for a smaller class of functions
than the above existence results. However, a (more) general uniqueness
result can be obtained from the result for distributional
solutions in Theorem \ref{uniqD} and equivalence. 
\begin{corollary}[Uniqueness 6]
Assume \textup{($\textup{A}_\varphi$)}, \textup{($\textup{A}_{\mu}$)}, and
\textup{($\textup{A}_{u_0}$)}. Then there is at most one \textup{energy} solution $u$
of \eqref{E}--\eqref{IC} satisfying  
$$u\in L^1(Q_T)\cap L^\infty(Q_T)\cap 
C([0,T];L_\textup{loc}^1(\R^N)).$$
\end{corollary}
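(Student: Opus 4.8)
The plan is to obtain this uniqueness statement for energy solutions of \eqref{E}--\eqref{IC} by combining the equivalence of solution notions with the already-established uniqueness for distributional solutions. Concretely, suppose $u_1$ and $u_2$ are both energy solutions of \eqref{E}--\eqref{IC} lying in $L^1(Q_T)\cap L^\infty(Q_T)\cap C([0,T];L^1_{\textup{loc}}(\R^N))$. By Definition \ref{energysoln}, each $u_i$ satisfies $\varphi(u_i)\in L^2(0,T;E_\mu(\R^N))$, so the hypotheses of Theorem \ref{equivalencedistrenergy} are met (note $\textup{(A}_{u_0})$ implies $u_0\in L^\infty(\R^N)$, and $u_i\in L^\infty(Q_T)$). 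Hence each $u_i$ is also a distributional solution of \eqref{E}--\eqref{IC}, and it still lies in $L^1(Q_T)\cap L^\infty(Q_T)\cap C([0,T];L^1_{\textup{loc}}(\R^N))$.

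At this point Theorem \ref{uniqD} (Uniqueness 6, i.e. Theorem 2.8 in \cite{DTEnJa15}) applies verbatim under the standing assumptions $\textup{(A}_\varphi)$, $\textup{(A}_\mu)$, $\textup{(A}_{u_0})$: there is at most one distributional solution in that regularity class, so $u_1=u_2$. This closes the argument, since we have exhibited both energy solutions as distributional solutions in the class where distributional uniqueness holds.

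I expect there to be essentially no obstacle here: the entire content has been front-loaded into Theorem \ref{equivalencedistrenergy} and Theorem \ref{uniqD}, and this corollary is a one-line transport statement. The only point requiring a moment's care is checking that the hypotheses of the equivalence theorem are genuinely a subset of what is assumed in the corollary — in particular that ``energy solution'' already forces $\varphi(u)\in L^2(0,T;E_\mu(\R^N))$ (true by Definition \ref{energysoln}(i) with $\kernel(x,\dd z)=\mu(\dd z)$, using Remark \ref{consequencesassumptions}(d)), so that no extra integrability on $\varphi(u)$ or on $u$ beyond $L^\infty(Q_T)$ is needed to invoke Theorem \ref{equivalencedistrenergy}. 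Given that, the proof is immediate.

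Alternatively — and equivalently — one could phrase it symmetrically: ``By Theorem \ref{equivalencedistrenergy}, $u$ is an energy solution of \eqref{E}--\eqref{IC} in the stated class if and only if $u$ is a distributional solution of \eqref{E}--\eqref{IC} in the stated class, and the latter is unique by Theorem \ref{uniqD}; hence so is the former.'' This is the cleanest presentation and is the one I would write down.
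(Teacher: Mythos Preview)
Your proof is correct and follows exactly the approach the paper indicates: transport the uniqueness from distributional solutions (Theorem \ref{uniqD}) to energy solutions via the equivalence in Theorem \ref{equivalencedistrenergy}. The paper's own proof is literally ``The proof is immediate,'' and your write-up simply spells out that immediacy.
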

The proof is immediate. The solutions of Corollary \ref{ex_energy}
are therefore unique, and this 
result is stronger than the Ole\u{\i}nik type result Corollary \ref{uniqueness}.
In view of the well-posedness of both energy and
distributional solutions and the equivalence between the two notions
of solutions, we now have a full equivalence result under assumptions
\textup{($\textup{A}_\varphi$)}, \textup{($\textup{A}_{\mu}$)}, and
\textup{($\textup{A}_{u_0}$)}.

\begin{corollary}[Equivalent notions of solutions 2]
Assume \textup{($\textup{A}_\varphi$)}, \textup{($\textup{A}_{\mu}$)},
\textup{($\textup{A}_{u_0}$)}, and $u\in L^1(Q_T)\cap L^\infty(Q_T)\cap 
C([0,T];L_\textup{loc}^1(\R^N))$. Then $u$ is an {\em energy} solution
of \eqref{E}--\eqref{IC} if and only if  it is a {\em
  distributional} solution.
\end{corollary}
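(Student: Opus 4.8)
The plan is to combine the equivalence result Theorem \ref{equivalencedistrenergy} with the existence result Theorem \ref{ex_Dsol} and the strong uniqueness result Theorem \ref{uniqD}. One implication is essentially free: if $u$ is an energy solution of \eqref{E}--\eqref{IC}, then by Definition \ref{energysoln} we have $\varphi(u)\in L^2(0,T;E_\mu(\R^N))$, and since by hypothesis $u\in L^\infty(Q_T)$ and $u_0\in L^\infty(\R^N)$, the implication (a)$\Rightarrow$(b) of Theorem \ref{equivalencedistrenergy} shows that $u$ is a distributional solution. (The extra regularity $u\in L^1(Q_T)\cap C([0,T];L_\textup{loc}^1(\R^N))$ is not even needed here.)

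For the converse, suppose $u$ is a distributional solution of \eqref{E}--\eqref{IC} lying in the class $L^1(Q_T)\cap L^\infty(Q_T)\cap C([0,T];L_\textup{loc}^1(\R^N))$. First I would invoke Theorem \ref{ex_Dsol} to produce a distributional solution $\tilde u$ of \eqref{E}--\eqref{IC} that belongs to the same class \emph{and} additionally satisfies $\varphi(\tilde u)\in L^2(0,T;E_\mu(\R^N))$. Since both $u$ and $\tilde u$ are distributional solutions satisfying $u,\tilde u\in L^1(Q_T)\cap L^\infty(Q_T)\cap C([0,T];L_\textup{loc}^1(\R^N))$, Theorem \ref{uniqD} gives $u=\tilde u$ a.e.\ in $Q_T$. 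In particular $\varphi(u)=\varphi(\tilde u)\in L^2(0,T;E_\mu(\R^N))$, so $u$ has finite energy; statement (b) of Theorem \ref{equivalencedistrenergy} then holds for $u$, and its implication (b)$\Rightarrow$(a) yields that $u$ is an energy solution. This closes the equivalence.

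The genuinely load-bearing step is the bootstrapping of finite energy for an \emph{arbitrary} distributional solution in the stated regularity class: this is not established by a direct energy estimate on $u$ itself, but is obtained indirectly by identifying $u$ with the particular solution $\tilde u$ constructed in Theorem \ref{ex_Dsol} — which is built so as to carry the energy bound — through the strong uniqueness of Theorem \ref{uniqD}. Everything else is bookkeeping: checking that the hypotheses $(\textup{A}_\varphi)$, $(\textup{A}_\mu)$, $(\textup{A}_{u_0})$ and $u\in L^\infty(Q_T)$ suffice to apply each of the three cited results, which is immediate. No new estimates beyond those already proven are required, so the argument is short.
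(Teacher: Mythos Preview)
Your proposal is correct and is precisely the argument the paper intends: the corollary is stated immediately after the sentence ``In view of the well-posedness of both energy and distributional solutions and the equivalence between the two notions of solutions, we now have a full equivalence result\ldots'', and no separate proof is given. Your write-up spells out exactly this route---Theorem~\ref{equivalencedistrenergy} for one direction, and Theorem~\ref{ex_Dsol} combined with Theorem~\ref{uniqD} to bootstrap finite energy for the other---so there is nothing to add.
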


We end this section by new quantitative energy and related $L^p$-estimates for the unique distributional solution $u$ provided by Theorems
\ref{ex_Dsol} and \ref{uniqD}. This type of estimates are widely used
for different local and nonlocal equations of porous medium type, see
the discussion in Section \ref{sec:rem}. All proofs are given in Section \ref{sec:approxEconv}. Now, define $\Phi:\R\to\R$ by $\Phi(w)=\int_0^{w}\varphi(\xi)\dd
\xi$. Then we have:
\begin{theorem}[Energy inequality]\label{e-estim}
Assume \textup{($\textup{A}_\varphi$)}, \textup{($\textup{A}_{\mu}$)}, and
\textup{($\textup{A}_{u_0}$)}. Then the \textup{distributional} solution $u$ of \eqref{E}--\eqref{IC} satisfies
\begin{align*}
 \int_{\R^N}\Phi(u(x,\tau))\dd
 x+|\varphi(u)|_{\tau,E_\mu}^2\leq
 \int_{\R^N}\Phi(u_0)\dd x\qquad\text{for}\qquad \tau\in(0,T].
\end{align*}
\end{theorem}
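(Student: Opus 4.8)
The plan is to derive the energy inequality as an \emph{a priori} estimate along the approximation scheme used to construct the distributional solution in Theorem~\ref{ex_Dsol}, rather than working directly with the limit solution (which has too little regularity to justify the formal computation). Formally, the inequality comes from multiplying the equation $\dell_t u = \Levy^\mu[\varphi(u)]$ by $\varphi(u)$ and integrating over $\R^N\times(0,\tau)$: the time term gives $\frac{\dd}{\dd t}\int_{\R^N}\Phi(u)\,\dd x$ since $\varphi(u)\,\dell_t u = \dell_t\Phi(u)$, and the spatial term gives $\int_{\R^N}\varphi(u)\Levy^\mu[\varphi(u)]\,\dd x = -\mathcal{E}_\mu[\varphi(u),\varphi(u)] = -|\varphi(u)(\cdot,t)|_{E_\mu}^2$ by \eqref{defgen}. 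Integrating in $t$ then yields exactly the claimed inequality (with equality at the formal level; the inequality appears because weak limits only give lower semicontinuity of the energy seminorm). So the real work is making this rigorous.

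First I would recall the regularized problems from the existence proof: one replaces $\varphi$ by a smooth, strictly increasing $\varphi_\varepsilon$ (so the equation becomes uniformly parabolic), $u_0$ by a smooth approximation $u_{0,\varepsilon}$, and possibly truncates/mollifies the operator $\Levy^\mu$, obtaining classical (or at least $H^1$-in-time, finite-energy) solutions $u_\varepsilon$ enjoying uniform $L^1\cap L^\infty$ bounds. For these smooth solutions the multiplication by $\varphi_\varepsilon(u_\varepsilon)$ is fully justified and gives
\begin{equation*}
\int_{\R^N}\Phi_\varepsilon(u_\varepsilon(x,\tau))\,\dd x + |\varphi_\varepsilon(u_\varepsilon)|_{\tau,E_\mu}^2 = \int_{\R^N}\Phi_\varepsilon(u_{0,\varepsilon})\,\dd x ,
\end{equation*}
where $\Phi_\varepsilon(w)=\int_0^w\varphi_\varepsilon(\xi)\,\dd\xi$. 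Next I would pass to the limit $\varepsilon\to0$ term by term. The right-hand side converges to $\int_{\R^N}\Phi(u_0)\,\dd x$ by uniform $L^\infty$ bounds, $\varphi_\varepsilon\to\varphi$ locally uniformly, and dominated convergence. For the first term on the left, using $u_\varepsilon\to u$ in $C([0,T];L^1_{\mathrm{loc}})$ (from the existence proof) together with the uniform $L^\infty$ bound and continuity of $\Phi$, one gets convergence (or at least $\liminf$) of $\int\Phi_\varepsilon(u_\varepsilon(\cdot,\tau))$ to $\int\Phi(u(\cdot,\tau))$ — here a little care is needed since $\Phi\ge 0$ only up to the normalization $\varphi(0)=0$ (Remark~\ref{consequencesassumptions}(e)), and tightness in $L^1$ must be invoked to rule out mass escaping to infinity; Fatou suffices to get ``$\le$'' on the RHS side, which is all we need. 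For the energy term, $\varphi_\varepsilon(u_\varepsilon)\to\varphi(u)$ in $L^2(Q_T)$ (again by uniform bounds and local uniform convergence of $\varphi_\varepsilon$), and the convergence in energy norm is already part of the construction in Section~\ref{sec:approxEconv}; in any case, lower semicontinuity of $f\mapsto |f|_{\tau,E_\mu}$ under, say, $L^1_{\mathrm{loc}}$ or weak convergence — which follows from Fatou applied to the nonnegative integrand in \eqref{energy} along a subsequence converging a.e. — gives $|\varphi(u)|_{\tau,E_\mu}^2 \le \liminf_\varepsilon |\varphi_\varepsilon(u_\varepsilon)|_{\tau,E_\mu}^2$. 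Combining these one-sided limits turns the $\varepsilon$-level equality into the desired inequality for the limit solution $u$, which is \emph{the} distributional solution by the uniqueness Theorem~\ref{uniqD}.

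The main obstacle I anticipate is controlling the behaviour at spatial infinity so that the integrals $\int_{\R^N}\Phi_\varepsilon(u_\varepsilon(\cdot,\tau))\,\dd x$ and $\int_{\R^N}\Phi_\varepsilon(u_{0,\varepsilon})\,\dd x$ genuinely converge rather than merely behaving well on compact sets — i.e. establishing enough uniform integrability/tightness in $x$. This is where one needs the $L^1$ theory: the uniform $L^1(\R^N)$ bound on $u_\varepsilon(\cdot,t)$ (conservation or contraction of the $L^1$ norm for the regularized problems) combined with $|\Phi_\varepsilon(w)|\le C|w|$ on the relevant bounded range (since $|\varphi_\varepsilon(w)|\le C$ there) gives a uniform $L^1$ bound on $\Phi_\varepsilon(u_\varepsilon(\cdot,t))$, and tightness transfers from $u_\varepsilon$ to $\Phi_\varepsilon(u_\varepsilon)$ because $|\Phi_\varepsilon(w)|\le C|w|$. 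A secondary technical point is justifying the integration by parts $\int \varphi_\varepsilon(u_\varepsilon)\Levy^\mu[\varphi_\varepsilon(u_\varepsilon)]\,\dd x = -\mathcal{E}_\mu[\varphi_\varepsilon(u_\varepsilon),\varphi_\varepsilon(u_\varepsilon)]$ at the regularized level, which requires $\varphi_\varepsilon(u_\varepsilon(\cdot,t))\in E_\mu(\R^N)$ with enough decay; this is available from the regularity built into the approximation (e.g. $\varphi_\varepsilon(u_\varepsilon)$ is bounded, Lipschitz, and $L^2$, hence in $E_\mu$ by the standard split of \eqref{energy} at $|z|=1$ using $\Sigma_\mu,\Pi_\mu$, cf. Remark~\ref{consequencesassumptions}(c)) and is essentially a repetition of the well-definedness argument for Definition~\ref{energysoln}. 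Everything else is routine.
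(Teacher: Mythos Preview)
Your proposal is correct and follows the same overall strategy as the paper: establish the energy identity exactly at a regularized level, then pass to the limit using Fatou/lower semicontinuity on the left and dominated convergence on the right. The paper's regularization is slightly different from what you sketch, and worth knowing: rather than making $\varphi_\varepsilon$ strictly increasing to force ``uniform parabolicity'' (which by itself would not obviously yield classical solutions for a general $\Levy^\mu$), it truncates the measure to $\mu_r=\mu\mathbf{1}_{|z|>r}$ and mollifies $\varphi$ to $\varphi_n$. The point is that $\Levy^{\mu_r}$ is then a bounded operator on $L^1\cap L^\infty$, so $\dell_t u_{r,n}=\Levy^{\mu_r}[\varphi_n(u_{r,n})]$ holds a.e.\ and the multiplication by $\varphi_n(u_{r,n})$ is immediate; the spatial-infinity issue you flag is handled by first multiplying by a cut-off $\mathcal{X}_R$ and sending $R\to\infty$ via dominated convergence (Proposition~\ref{Psiineq}), and your integration-by-parts concern is dispatched by the finite-measure identity of Lemma~\ref{simpleStroock-Var}. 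One consequence of this choice is that the approximate energy is $|\varphi_n(u_{r,n})|_{\tau,E_{\mu_r}}^2$ rather than $|\cdot|_{\tau,E_\mu}^2$, so the limit requires a small two-step Fatou argument (first in $(r,n)$ with a fixed outer truncation $\mu_R$, then $R\to0^+$), already carried out in the proof of Theorem~\ref{ex_Dsol}. Otherwise your plan matches the paper's proof essentially line for line.
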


 Since $\Phi\geq0$, we immediately have a quantitative bound on the energy.
\begin{corollary}
Assume \textup{($\textup{A}_\varphi$)}, \textup{($\textup{A}_{\mu}$)}, and
\textup{($\textup{A}_{u_0}$)}. Then the \textup{distributional} solution $u$ of \eqref{E}--\eqref{IC} satisfies
\begin{align*}
|\varphi(u)|_{T,E_\mu}^2\leq
\|\varphi(u_0)\|_{L^\infty(\R^N)}\|u_0\|_{L^1(\R^N)}<\infty. 
\end{align*}
\end{corollary}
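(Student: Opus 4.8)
The plan is to deduce this corollary directly from the energy inequality in Theorem~\ref{e-estim} together with the sign and growth properties of $\Phi$; no new machinery is needed.

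First I would recall that, by Remark~\ref{consequencesassumptions}(e), we may assume $\varphi(0)=0$. Since $\varphi$ is nondecreasing, $\varphi(\xi)\geq 0$ for $\xi\geq0$ and $\varphi(\xi)\leq0$ for $\xi\leq0$; hence $\Phi(w)=\int_0^w\varphi(\xi)\,\dd\xi\geq0$ for every $w\in\R$. Applying Theorem~\ref{e-estim} with $\tau=T$ and discarding the nonnegative term $\int_{\R^N}\Phi(u(x,T))\,\dd x$ then gives
\begin{align*}
|\varphi(u)|_{T,E_\mu}^2\leq\int_{\R^N}\Phi(u_0(x))\,\dd x.
\end{align*}

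It remains to estimate the right-hand side, for which I would use the pointwise bound $0\leq\Phi(w)\leq|\varphi(w)|\,|w|$, valid for all $w\in\R$: for $w\geq0$ monotonicity gives $\Phi(w)=\int_0^w\varphi(\xi)\,\dd\xi\leq w\,\varphi(w)$, and for $w<0$ one has $\Phi(w)=\int_w^0(-\varphi(\xi))\,\dd\xi\leq|w|\,|\varphi(w)|$ since $-\varphi(\xi)\leq-\varphi(w)=|\varphi(w)|$ on $[w,0]$. Taking $w=u_0(x)$ and bounding $|\varphi(u_0(x))|\leq\|\varphi(u_0)\|_{L^\infty(\R^N)}$ yields $\Phi(u_0(x))\leq\|\varphi(u_0)\|_{L^\infty(\R^N)}\,|u_0(x)|$; integrating over $\R^N$ gives
\begin{align*}
|\varphi(u)|_{T,E_\mu}^2\leq\|\varphi(u_0)\|_{L^\infty(\R^N)}\|u_0\|_{L^1(\R^N)}.
\end{align*}
Finiteness of the last quantity follows from \textup{($\textup{A}_{u_0}$)}, which gives $\|u_0\|_{L^1(\R^N)}<\infty$, together with continuity of $\varphi$, which gives $\|\varphi(u_0)\|_{L^\infty(\R^N)}\leq\sup_{|s|\leq\|u_0\|_{L^\infty(\R^N)}}|\varphi(s)|<\infty$.

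There is essentially no obstacle here: once Theorem~\ref{e-estim} is available, the corollary is a one-line consequence, and the only point requiring (elementary) verification is the pointwise bound on $\Phi$, which rests solely on the monotonicity of $\varphi$ and the normalization $\varphi(0)=0$.
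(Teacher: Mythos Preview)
Your argument is correct and matches the paper's approach exactly: the paper simply notes that since $\Phi\geq0$, the corollary follows immediately from Theorem~\ref{e-estim}, with the bound $\Phi(u_0)\leq\|\varphi(u_0)\|_{L^\infty}|u_0|$ (which appears explicitly in the proof of Corollary~\ref{energyapproxproblem}). You have merely spelled out the elementary verification of $\Phi\geq0$ and the pointwise bound on $\Phi$ in more detail than the paper does.
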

There is also a second type of energy inequality that implies  $L^p$-bounds.

\begin{theorem}[$L^p$-bounds]\label{Lp-estim}
Assume \textup{($\textup{A}_\varphi$)}, \textup{($\textup{A}_{\mu}$)}, and
\textup{($\textup{A}_{u_0}$)}. Then the \textup{distributional} solution $u$ of \eqref{E}--\eqref{IC} satisfies, for $0<\tau\leq T$,
\begin{align*}
 \int_{\R^N}|u(x,\tau)|^p\dd
 x\leq
 \int_{\R^N}|u_0(x)|^p\dd x\qquad\text{for all}\qquad p\in[1,\infty), 
\end{align*}
and in the case $p=\infty$,
$$
\|u(\cdot,\tau)\|_{L^\infty(\R^N)}\leq\|u_0\|_{L^\infty(\R^N)}.
$$
\end{theorem}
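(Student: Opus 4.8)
\noindent The plan is to establish the estimate first for the regularized problems that are used to construct the distributional solution in Theorem~\ref{ex_Dsol}, and then pass to the limit. Recall that the solution $u$ of Theorem~\ref{ex_Dsol} arises as a limit in $C([0,T];L_\textup{loc}^1(\R^N))$ (and a.e.\ in $Q_T$ along a subsequence) of solutions $u_n$ of approximate problems of the form $\dell_tu_n-\mathcal{L}^{\mu_n}[\varphi_n(u_n)]=0$, $u_n(\cdot,0)=u_{0,n}$, where $\mathcal{L}^{\mu_n}$ is a \emph{bounded} nonlocal operator (still a generator of a bilinear form as in \eqref{energy} with a nonnegative quadratic form), $\varphi_n$ is smooth, nondecreasing, Lipschitz with $\varphi_n(0)=0$ and $\varphi_n\to\varphi$ locally uniformly, and $u_{0,n}\in C_\textup{c}^\infty(\R^N)$ is chosen so that $u_{0,n}\to u_0$ in $L^1\cap L^2$ and $\|u_{0,n}\|_{L^p(\R^N)}\le\|u_0\|_{L^p(\R^N)}$ for all $p\in[1,\infty]$. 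Since $\mathcal{L}^{\mu_n}$ is bounded, each approximate problem is an ODE in a Banach space, so $u_n$ is $C^1$ in time with values in $L^1(\R^N)\cap L^\infty(\R^N)$ and inherits the uniform bound $\|u_n\|_{L^\infty(Q_T)}+\sup_{t}\|u_n(\cdot,t)\|_{L^1(\R^N)}\le C$ from the existence proof.

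For fixed $p\in(1,\infty)$ set $j_p(s):=\tfrac1p|s|^p$, so that $j_p'(s)=|s|^{p-2}s$ is nondecreasing. Testing the $n$-th equation against $j_p'(u_n)$ at each time and using the symmetry of $\mathcal{L}^{\mu_n}$ together with \eqref{defgen} gives
\[
\frac1p\frac{\dd}{\dd t}\int_{\R^N}|u_n(x,t)|^p\,\dd x
=\int_{\R^N}j_p'(u_n)\,\mathcal{L}^{\mu_n}[\varphi_n(u_n)]\,\dd x
=-\mathcal{E}_{\mu_n}\!\big[j_p'(u_n),\varphi_n(u_n)\big].
\]
By \eqref{energy} the bilinear form on the right has integrand $\big(j_p'(u_n(x+z))-j_p'(u_n(x))\big)\big(\varphi_n(u_n(x+z))-\varphi_n(u_n(x))\big)\ge0$, because $j_p'$ and $\varphi_n$ are both nondecreasing; hence $\tfrac{\dd}{\dd t}\|u_n(\cdot,t)\|_{L^p(\R^N)}^p\le0$. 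Integrating over $(0,\tau)$ and using the choice of $u_{0,n}$ yields $\|u_n(\cdot,\tau)\|_{L^p(\R^N)}\le\|u_0\|_{L^p(\R^N)}$. Since $u_n(\cdot,\tau)\to u(\cdot,\tau)$ in $L_\textup{loc}^1(\R^N)$ for every $\tau$, we may pass to an a.e.\ convergent subsequence and apply Fatou's lemma to obtain $\int_{\R^N}|u(x,\tau)|^p\,\dd x\le\liminf_n\int_{\R^N}|u_n(x,\tau)|^p\,\dd x\le\int_{\R^N}|u_0(x)|^p\,\dd x$ for all $p\in(1,\infty)$; the case $p=1$ then follows either by testing with a smooth approximation of $\sgn(u_n)$ (whose energy pairing with $\varphi_n(u_n)$ again has nonnegative integrand) or by letting $p\downarrow1$ and using $u,u_0\in L^1\cap L^\infty$. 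For $p=\infty$ one lets $p\to\infty$: since $\|g\|_{L^p(\R^N)}\to\|g\|_{L^\infty(\R^N)}$ for every $g\in L^1(\R^N)\cap L^\infty(\R^N)$, we get $\|u(\cdot,\tau)\|_{L^\infty(\R^N)}=\lim_{p\to\infty}\|u(\cdot,\tau)\|_{L^p(\R^N)}\le\lim_{p\to\infty}\|u_0\|_{L^p(\R^N)}=\|u_0\|_{L^\infty(\R^N)}$ (alternatively this bound is already among the uniform estimates on $u_n$ and passes to the limit directly).

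The main technical point is the rigorous justification of the spatial identity in the display above: $j_p'(u_n)$ is not compactly supported and, for $p$ close to $1$, need not belong to $L^2(\R^N)$, so one cannot simply invoke \eqref{defgen}. I would handle this by truncating ($u_n\rightsquigarrow(-k)\vee u_n\wedge k$ inside $j_p'$), using that $\mathcal{L}^{\mu_n}[\varphi_n(u_n)]=\dell_tu_n\in L^1(\R^N)\cap L^\infty(\R^N)$ and the decay/integrability properties of the regularized solutions to pass $k\to\infty$, and then a density argument to move between the integral representation \eqref{deflevy} of $\mathcal{L}^{\mu_n}$ and the bilinear form $\mathcal{E}_{\mu_n}$. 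Everything else is a routine consequence of the approximation scheme and the convergence established in the proof of Theorem~\ref{ex_Dsol}.
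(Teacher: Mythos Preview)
Your strategy matches the paper's: prove the $L^p$-bound for the regularised problems and pass to the limit via Fatou. The paper records this in Corollary~\ref{approxLpbound} (for $p\in(1,\infty)$) and then takes $j\to\infty$ along the compactness subsequence of Theorem~\ref{convapproxtoE}; for $p=1$ and $p=\infty$ it simply invokes the a~priori $L^1$/$L^\infty$ bounds already available from the existence theory (Remark~\ref{L1Linftyboundsu}), whereas you recover these cases by letting $p\downarrow1$ and $p\to\infty$, which also works.

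There are two noteworthy differences. First, for the sign of the dissipation term you use the elementary observation that $(j_p'(a)-j_p'(b))(\varphi_n(a)-\varphi_n(b))\ge0$ because both functions are nondecreasing. The paper instead applies the Stroock--Varopoulos inequality (Lemma~\ref{genS-V}) to extract an additional energy term $|\Xi_n(u_{r,n})|_{\tau,E_{\mu_r}}^2$ on the left-hand side; your shortcut is cleaner if only the $L^p$ decay is wanted, while the paper's route yields the stronger estimate discussed around \eqref{Lpvaz}. Second, the paper does \emph{not} smooth the initial data; it keeps $u_0$ and regularises only $\mu$ and $\varphi$.

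One point to correct: your diagnosis of ``the main technical point'' is slightly off. Since $u_n\in L^\infty$, the truncation $(-k)\vee u_n\wedge k$ is eventually the identity and does nothing; and because $j_p'(u_n)\in L^\infty$ while $\varphi_n(u_n)\in L^1$ with $\mu_n$ a finite measure, the passage from $\int j_p'(u_n)\,\mathcal L^{\mu_n}[\varphi_n(u_n)]\,\dd x$ to $-\mathcal E_{\mu_n}[j_p'(u_n),\varphi_n(u_n)]$ follows directly from the $L^1$--$L^\infty$ case of Lemma~\ref{simpleStroock-Var}, so no extra truncation is needed. The genuine subtlety in the paper's framework is different: for $p\in(1,2)$ the function $\Lambda(\xi)=|\xi|^p$ fails $\Lambda'\in W^{1,\infty}_{\mathrm{loc}}$ at the origin, so Proposition~\ref{Psiineq} does not apply directly; the paper fixes this by regularising $\Lambda$ near $0$ (the family $\Lambda_\delta$ in the proof of Corollary~\ref{approxLpbound}) and sending $\delta\to0^+$.
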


\subsection{Remarks}
\label{sec:rem}
 
\subsubsection*{Locally shift-bounded kernels}
Let $\mu(\dd z)$ be a nonnegative locally finite Borel measure on
$\R^N\setminus\{0\}$ and $j(x,z)$ a measurable function satisfying
$$0<m\leq j(x,z)\leq M<\infty.$$
Then the kernel
 $$\lambda(x,\dd z)=j(x,z)\mu(\dd z)$$
is not only locally, but also globally, shift-bounded in
the sense that for all $x,h\in\R^N$ and Borel $B\subset
\R^N\setminus\{0\}$,  
$$
\frac{\lambda(x+h,B)}{\lambda(x,B)}\leq\frac{M}{m}.
$$
Examples of $\mu$ are L\'evy measures of L\'evy
processes, e.g. $\mu(\dd z)=\frac{c_{N,\alpha}}{|z|^{N+\alpha}}\dd z$ for
the $\alpha$-stable process ($\alpha\in(0,2)$) with the fractional
Laplacian as generator. The latter case
  corresponds exactly to assumption \textup{(A$_{\kernel 1}$'')}.

\subsubsection*{Recurrence and alternative characterization of $X$} 
In Theorem \ref{prop:characterizationofX} (a) approximation by test
functions is obtained by 
an additional assumption on the function
class. Alternatively, as in part (b), we can keep the
  original function class, but restrict the bilinear form 
$\mathcal{E}_\kernel$ (and hence the generator $A^\kernel$). In the
elliptic setting such results are given in  Theorem 3.2 in \cite{ScUe12}
under the assumptions that \textup{(A$_{\kernel 0}$)}, \textup{(A$_{\kernel1}$')} and \textup{(A$_{\lambda 2}$)} hold and the
closure of $(\mathcal{E}_\kernel, C_{\textup{c}}^\infty(\R^N))$  is {\em recurrent}.
A condition ensuring recurrence for symmetric L\'evy
processes is given in Section 37 in \cite{Sat99}. E.g. the fractional
Laplacian $-(-\Delta)^{\frac\alp 2}$ for $\alpha\in(0,2)$ is recurrent
if $N\leq \alpha$ -- which is a rather restrictive assumption! 
Similar results are true in our parabolic
setting. Assuming recurrence, or rather, assuming existence of the
  sequence of cut-off functions mentioned in Lemma 3.1 in  \cite{ScUe12}, we get 
$$
X=L^\infty(Q_T)\cap L^2(0,T;E_\kernel(\R^N)).
$$
The proof is an easy modification of the proof of Theorem 3.2 in
\cite{ScUe12} if we assume \text{\textup{(A$_{\lambda 0}$)}}, \textup{(A$_{\lambda 1}$')}, and \textup{(A$_{\lambda 2}$)} hold (as in Theorem \ref{prop:characterizationofX} (a)) and, in addition, $\iint |x-y|^2\Lambda(\dd x,\dd y)<\infty$. 
Note that the latter condition implies $\int_0^T\mathcal
E_\kernel[\phi_n,\phi_n]\dd t\to
0$ for any $\phi_n\in C^1_\textup{c}(Q_T)$ such that $\phi_n\to 1$ $a.e.$ and
$\|D\phi_n\|_{L^\infty}\to0$. However, this extra condition 
excludes all L\'evy processes and all $x$-independent generators.

\subsection*{Integral representations of the operators
  $A^\kernel$} 
In general the operator $A^\kernel$ is abstractly  defined from
$\mathcal{E}_\kernel$ by formula \eqref{defgen}. However explicit
integral representation formulas exist under additional assumptions on 
the kernel $\lambda(x,\dd z)$ (cf. \eqref{energy}). We follow
\cite{ScUe07} and assume (A$_{\kernel 0}$) and (A$_{\kernel 1}$)
hold and
$$
\kernel(x,\dd z)=\tilde{\lambda}(x,x+\dd z)=j(x,x+z)\dd z,
$$ 
where  $j\geq0$ is a symmetric measurable function on $\R^N\times\R^N\setminus\{x\}$ such that
$$
\int_{|z|\leq1}|z|\big|j(x,x+z)-j(x,x-z)\big|\dd z<\infty.
$$
Symmetric here means that $j(x,y)=j(y,x)$.
Note that now $\Lambda(\dd x,\dd y)=j(x,y)\, \dd x\dd y$ in \eqref{energy0}.
By Theorem 2.2 in \cite{ScUe07}, it then follows that
\begin{equation*}
\begin{split}
A^\kernel[\phi](x):=&\int_{|z|>0}\big(\phi(x+z)-\phi(x)-z\cdot D\phi(x)\mathbf{1}_{|z|\leq1}\big)\,j(x,x+z)\dd z\\
&\quad+\frac{1}{2}\int_{|z|>0}z\mathbf{1}_{|z|\leq1}\big(j(x,x+z)-j(x,x-z)\big)\dd
z\cdot D\phi(x)
\end{split}
\end{equation*}
for $\phi\in C_{\textup{c}}^2(\R^N)$. Compare with \eqref{deflevy} and note that
the second integral is like a drift term that vanishes if
$j(x,x+z)=j(x,x-z)$. Under slightly stronger assumptions, this
$A^\kernel$ coincides on $C_{\textup{c}}^2(\R^N)$ with the generator of
the closure of $(\mathcal{E}_\kernel, C_{\textup{c}}^\infty(\R^N))$ in
$L^2(\R^N)$ -- see Proposition 2.5 in \cite{ScUe07}.

Let us simplify and assume that
$$j(x,y)=j_1(x,y)\mu(x-y)$$
for $j_1$ symmetric, $j_1(x,x+z)=j_1(x,x-z)$, and $\mu$ even. This $j$
is symmetric and $j(x,x+z)=j(x,x-z)$.  Taking $j_1(x,y)=a(x)+a(y)$ and
$\mu(z)=\frac{c_{N,\alpha}}{|z|^{N+\alp}}$, the L\'evy density of the
fractional Laplacian, we get an $x$-depending
fractional Laplace like operator:
\begin{align*}A^\kernel_1[\phi](x)=&-a(x)(-\Delta)^{\frac\alp2}\phi(x) \\
&+ \int_{|z|>0}\Big(\phi(x+z)-\phi(x)-z\cdot
D\phi(x)\mathbf{1}_{|z|\leq1}\Big)
  a(x+z)\frac{c_{N,\alpha}}{|z|^{N+\alp}}\dd z.
\end{align*}
From this example we also learn that our class of operators does not include
the simplest and most natural $x$-depending fractional Laplace operator, 
$$-a(x)(-\Delta)^{\frac\alp2}\phi(x),$$
since it only satisfies the symmetry assumption on $j$ (or (A$_{\kernel 2}$)) 
 if $a$ is constant! 

\subsubsection*{On $L^p$-estimates}
If $\varphi(u)=u|u|^{m-1}$ and $\Levy^\mu=-(-\Delta)^{\frac{\sigma}{2}}$,
then by \cite{PaQuRoVa12} the estimate corresponding to Theorem
\ref{Lp-estim} takes the form 
\begin{equation}\label{Lpvaz}
\int_{\R^N}|u(x,\tau)|^p\dd x+\int_0^\tau \int_{\R^N} \left|(-\Delta)^{\frac{\sigma}{4}} |u|^{\frac{p+m-1}{2}}\right|^2\dd x\dd t \leq \int_{\R^N}|u_0(x)|^p\dd x.
\end{equation}
Note the additional energy term. A closer look
at our proof, see Corollary \ref{approxLpbound} and the proof of Theorem \ref{Lp-estim}, reveals that we could
also have an $L^p$-estimate with some energy. However, this energy is
only a limit and hard to characterize under our weak assumptions.

Such $L^p$ type decay estimates are an essential tool for nonlinear
diffusion equations of porous medium type. They imply that
$|u|^{\frac{p+m-1}{2}}$ belongs to some Sobolev space. This estimate and the
Nash-Gagliardo-Nirenberg inequality can be used in a Moser iteration
argument to obtain an $L^1-L^\infty$ smoothing effect and then
existence of energy solutions with initial data merely in $L^1$
\cite{Vaz06, Vaz07,PaQuRoVa12,PaQuRoVa14,PaQuRo16}. The
other main application of the $L^p$-energy estimates is as key steps
in Sobolev or Simon type compactness arguments. Such arguments are
used in \cite{Vaz06,BiKaMo10, BiImKa15,StanTesoVazCRAS, StTeVa15, StTeVa16}
to prove existence of energy solutions through the resolution of a
sequence of smooth approximate problems and passing to the limit in
view of compactness.  


\section{Proof of uniqueness for energy solutions}
\label{sec:proofofgeneraluniquenessx}

In this section we prove Theorem \ref{generaluniquenessx}.
We start by some preliminary results.

\begin{lemma}[Cauchy-Schwartz]\label{C-S}
Assume \textup{(A$_{\lambda 1}$)}. If $f,g\in L^2(0,T;E_\kernel(\R^N))$, then 
$$\left|\int_0^T\El[f(\cdot,t),g(\cdot,t)]\,\dd t\right|\leq |f|_{T,E_\kernel}|g|_{T,E_\kernel}.$$
\end{lemma}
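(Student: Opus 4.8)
The plan is to reduce the claim to two successive applications of the Cauchy--Schwarz inequality: one ``in space'' (for the bilinear form $\El$ at each fixed time $t$), and one ``in time'' (on $L^2(0,T)$). First I would establish the pointwise-in-time estimate
$$
\big|\El[f(\cdot,t),g(\cdot,t)]\big|\leq |f(\cdot,t)|_{E_\kernel}\,|g(\cdot,t)|_{E_\kernel}
\qquad\text{for a.e. }t\in(0,T).
$$
This is just Cauchy--Schwarz for the (nonnegative, symmetric) bilinear form $\El$. Concretely, using the representation \eqref{energy}, I would write $\El[f(\cdot,t),g(\cdot,t)]=\tfrac12\int_{\R^N}\int_{|z|>0}\Delta_z f\,\Delta_z g\,\kernel(x,\dd z)\dd x$ with $\Delta_z h(x):=h(x+z)-h(x)$, observe that $\kernel(x,\dd z)\dd x$ is a (sigma-finite, by \textup{(A$_{\kernel 1}$)}) nonnegative measure on $\R^N\times(\R^N\setminus\{0\})$, and apply the classical Cauchy--Schwarz inequality on $L^2$ of that measure. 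This simultaneously shows that the right-hand side is finite, hence that $\El[f(\cdot,t),g(\cdot,t)]$ is well-defined, whenever $|f(\cdot,t)|_{E_\kernel},|g(\cdot,t)|_{E_\kernel}<\infty$, which holds for a.e.\ $t$ since $f,g\in L^2(0,T;E_\kernel(\R^N))$.

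Next I would address measurability of $t\mapsto \El[f(\cdot,t),g(\cdot,t)]$ so that the time integral makes sense. The cleanest route is polarization: $\El[f,g]=\tfrac14\big(|f+g|_{E_\kernel}^2-|f-g|_{E_\kernel}^2\big)$, so it suffices that $t\mapsto |h(\cdot,t)|_{E_\kernel}^2$ is measurable for $h\in L^2(0,T;E_\kernel(\R^N))$; this is implicit in the very definition of that space (it is part of what makes $|h|_{T,E_\kernel}^2=\int_0^T|h(\cdot,t)|_{E_\kernel}^2\,\dd t$ meaningful), and at the level of the double integral it follows from Tonelli applied to $(x,z,t)$.

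Finally I would combine the two steps: integrating the pointwise bound in $t$ and then applying the Cauchy--Schwarz inequality on $L^2(0,T)$,
$$
\left|\int_0^T\El[f(\cdot,t),g(\cdot,t)]\,\dd t\right|
\leq \int_0^T |f(\cdot,t)|_{E_\kernel}\,|g(\cdot,t)|_{E_\kernel}\,\dd t
\leq \left(\int_0^T|f(\cdot,t)|_{E_\kernel}^2\,\dd t\right)^{\!1/2}\!\left(\int_0^T|g(\cdot,t)|_{E_\kernel}^2\,\dd t\right)^{\!1/2},
$$
which is exactly $|f|_{T,E_\kernel}\,|g|_{T,E_\kernel}$ by definition of the parabolic energy seminorm. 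I expect no genuine obstacle here; the only mild technical care needed is the measurability bookkeeping in $t$ and the verification that $\kernel(x,\dd z)\dd x$ is a bona fide sigma-finite measure so that the abstract Cauchy--Schwarz inequality on its $L^2$ space applies — both are routine given \textup{(A$_{\kernel 0}$)}--\textup{(A$_{\kernel 1}$)}.
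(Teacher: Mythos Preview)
Your proposal is correct and entirely standard; the paper in fact omits the proof, simply remarking that it ``is as for the classical Cauchy--Schwartz.'' Your two-step route (Cauchy--Schwarz for $\El$ at fixed $t$, then Cauchy--Schwarz in $L^2(0,T)$) is one natural reading of that remark; an equally classical one-step alternative is to apply Cauchy--Schwarz directly on $L^2$ of the product measure $\kernel(x,\dd z)\,\dd x\,\dd t$ (or, equivalently, to the nonnegative symmetric bilinear form $(f,g)\mapsto\int_0^T\El[f,g]\,\dd t$), which bypasses the separate time step but yields the same bound.
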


The proof is as for the classical Cauchy-Schwartz and we omit it.

\begin{lemma}\label{T-lem}
Assume \textup{(A$_{\lambda 1}$)}. If $f\in L^2(0,T;E_\lambda(\R^N))$ and
$g(x,t)=\int_t^Tf(x,s)\dd s$, then $|g|_{T,E_\kernel}^2\leq
\frac{T^2}2|f|_{T,E_\kernel}^2$. 
\end{lemma}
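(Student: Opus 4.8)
The plan is to express the energy seminorm of $g(\cdot,t) = \int_t^T f(\cdot,s)\,\dd s$ in terms of the bilinear form and reduce to a double time-integral estimate. Concretely, for fixed $t$ we write
\[
|g(\cdot,t)|_{E_\kernel}^2 = \mathcal{E}_\kernel[g(\cdot,t),g(\cdot,t)]
= \mathcal{E}_\kernel\!\left[\int_t^T f(\cdot,s)\,\dd s,\ \int_t^T f(\cdot,r)\,\dd r\right]
= \int_t^T\!\!\int_t^T \mathcal{E}_\kernel[f(\cdot,s),f(\cdot,r)]\,\dd s\,\dd r,
\]
where pulling the time-integrals out of the bilinear form is justified by bilinearity together with the integrability guaranteed by $f\in L^2(0,T;E_\kernel(\R^N))$ and Tonelli/Fubini (working on the explicit representation \eqref{energy} of $\mathcal{E}_\kernel$ if one wants to be fully rigorous). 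Then I would integrate in $t$ over $(0,T)$ and use Tonelli to get
\[
|g|_{T,E_\kernel}^2 = \int_0^T\!\!\int_t^T\!\!\int_t^T \mathcal{E}_\kernel[f(\cdot,s),f(\cdot,r)]\,\dd s\,\dd r\,\dd t.
\]

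Next I would bound the integrand pointwise in $(s,r)$ using the (non-time-integrated) Cauchy--Schwarz inequality for $\mathcal{E}_\kernel$, namely $|\mathcal{E}_\kernel[f(\cdot,s),f(\cdot,r)]|\le |f(\cdot,s)|_{E_\kernel}|f(\cdot,r)|_{E_\kernel}$, and then the elementary inequality $ab\le\frac12(a^2+b^2)$ with $a=|f(\cdot,s)|_{E_\kernel}$, $b=|f(\cdot,r)|_{E_\kernel}$, giving
\[
\bigl|\mathcal{E}_\kernel[f(\cdot,s),f(\cdot,r)]\bigr|\le \tfrac12\bigl(|f(\cdot,s)|_{E_\kernel}^2+|f(\cdot,r)|_{E_\kernel}^2\bigr).
\]
By the symmetry of the triple integral in $s$ and $r$ this produces $|g|_{T,E_\kernel}^2\le \int_0^T\!\int_t^T\!\int_t^T |f(\cdot,s)|_{E_\kernel}^2\,\dd s\,\dd r\,\dd t$. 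The $\dd r$-integral contributes a factor $(T-t)\le T$, and then swapping the order of the remaining $\dd s\,\dd t$ integration (Tonelli again) gives
\[
|g|_{T,E_\kernel}^2 \le T\int_0^T\!\!\int_0^s |f(\cdot,s)|_{E_\kernel}^2\,\dd t\,\dd s
= T\int_0^T s\,|f(\cdot,s)|_{E_\kernel}^2\,\dd s
\le T\cdot T\int_0^T |f(\cdot,s)|_{E_\kernel}^2\,\dd s,
\]
which is worse than claimed by a factor $2$; to recover the sharp constant $\frac{T^2}{2}$ I would instead, after the pointwise bound, evaluate the region $\{0<t<s, 0<t<r\}$ more carefully. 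Writing $h(s):=|f(\cdot,s)|_{E_\kernel}^2$, the quantity is $\iint_{(0,T)^2} h(s)\,m(s,r)\,\dd s\,\dd r$ where $m(s,r)=\min(s,r)$ is the measure of $\{t: t<s, t<r\}$; using symmetry to restrict to $s<r$ (where $m=s$) and doubling, one gets $|g|_{T,E_\kernel}^2\le 2\int_0^T\!\int_s^T s\,h(s)\,\dd r\,\dd s = 2\int_0^T s(T-s)h(s)\,\dd s \le 2\cdot\frac{T^2}{4}\int_0^T h(s)\,\dd s = \frac{T^2}{2}|f|_{T,E_\kernel}^2$, using $\sup_{s\in[0,T]} s(T-s)=T^2/4$. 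That is exactly the asserted bound.

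The only genuinely delicate point is the interchange of the time-integrals with the bilinear form $\mathcal{E}_\kernel$ in the first display; everything else is Tonelli plus elementary inequalities. To handle it cleanly I would work with the representation \eqref{energy}, note that the integrand $\bigl(f(x+z,s)-f(x,s)\bigr)\bigl(f(x+z,r)-f(x,r)\bigr)$ is measurable in $(x,z,s,r)$, and that absolute integrability over $\{|z|>0\}\times\R^N\times(t,T)^2$ follows from $\int_t^T|f(\cdot,s)|_{E_\kernel}\,\dd s<\infty$ (a consequence of $f\in L^2(0,T;E_\kernel(\R^N))$ and Cauchy--Schwarz in $s$) together with the pointwise Cauchy--Schwarz bound inside the $\dd x\,\dd z$ integral; Fubini then licenses all rearrangements, and in particular licenses writing $\mathcal{E}_\kernel[g(\cdot,t),g(\cdot,t)]$ as the double $s,r$ integral of $\mathcal{E}_\kernel[f(\cdot,s),f(\cdot,r)]$. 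Once this is in place the proof is a short computation, so I would present the Fubini justification as a one-line remark and spend the rest on the elementary min-integral bound above.
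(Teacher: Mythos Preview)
Your approach is correct and reaches the stated bound, but it is more circuitous than the paper's and contains a small computational slip.

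\textbf{Comparison with the paper.} The paper works directly with the integral representation \eqref{energy}: for fixed $(x,z)$ it applies Jensen's (equivalently Cauchy--Schwarz in $s$) to the scalar integral, namely
\[
\Big|\int_t^T\big(f(x+z,s)-f(x,s)\big)\,\dd s\Big|^2\le (T-t)\int_t^T\big|f(x+z,s)-f(x,s)\big|^2\,\dd s,
\]
and then uses Tonelli to arrive in one step at $|g|_{T,E_\kernel}^2\le\int_0^T(T-t)\int_t^T|f(\cdot,s)|_{E_\kernel}^2\,\dd s\,\dd t$. This bypasses entirely the ``delicate'' Fubini justification you worry about (there is nothing to pull through the bilinear form), and it lands on the same intermediate quantity that your expansion-plus-Cauchy--Schwarz-plus-$ab\le\tfrac12(a^2+b^2)$ produces. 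So the two routes are equivalent in content; the paper's is just shorter and needs less bookkeeping.

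\textbf{The slip.} In your sharper evaluation, the integrand $h(s)\min(s,r)$ is \emph{not} symmetric in $(s,r)$ (only $\min(s,r)$ is), so you cannot ``restrict to $s<r$ and double''. The correct computation is
\[
\iint_{(0,T)^2} h(s)\min(s,r)\,\dd s\,\dd r=\int_0^T h(s)\Big(\tfrac{s^2}{2}+s(T-s)\Big)\dd s=\int_0^T h(s)\big(sT-\tfrac{s^2}{2}\big)\dd s,
\]
and $sT-\tfrac{s^2}{2}\le \tfrac{T^2}{2}$ on $[0,T]$ (with equality at $s=T$), giving the claimed constant. Your expression $2\int_0^T s(T-s)h(s)\,\dd s$ is not equal to the left-hand side in general; it just happens that $2s(T-s)$ is also bounded by $\tfrac{T^2}{2}$, so your final inequality survives despite the wrong intermediate identity.
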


\begin{proof}
By Jensen's inequality and Tonelli's lemma, 
\begin{align*}
|g|_{T,E_\kernel}^2
&\leq\int_0^T\frac{1}{2}\iint(T-t)\int_t^T\left|f(x+z,s)-f(x,s)\right|^2\, \dd s\kernel(x, \dd z)\dd x\dd t\\
&=\int_0^T(T-t)\Big(\int_t^T|f(\cdot,s)|_{E_\lambda}^2\dd s\Big) \dd t,
\end{align*}
and the result follows.
\end{proof}

Since an energy solution has some regularity, the weak formulation of
the equation will hold also with less regular test functions. We will now
formulate such a type of result in the relevant setting for the
Ole\u{\i}nik  argument.
\begin{lemma}\label{density} Let $u$ be an energy solution of \eqref{EE}--\eqref{EIC}. If $u\in L^1(Q_T)$, $u_0\in L^1(\R^N)$, and
  $\varphi(u)\in L^2(0,T;E_\lambda(\R^N))$, then for any $\phi\in X$,
\begin{align*}
\displaystyle\int_{0}^T\left(-\int_{\R^N}u\phi\,\dd
   x-\mathcal{E}_\kernel\left[\varphi(u),\int_t^T\phi(\cdot,s)\dd s\right]
 \right)\dd t&\\
+
  \int_{\R^N}u_0(x)\left(\int_0^T\phi(x,s)\,\dd s\right)\dd x&=0.
\end{align*}
\end{lemma}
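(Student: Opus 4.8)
The goal is to extend the weak formulation in Definition \ref{energysoln}(ii) from test functions $\psi \in C_\textup{c}^\infty(\R^N \times [0,T))$ to the less regular function $\phi \in X$, after the time integration $\phi \mapsto \int_t^T \phi(\cdot,s)\,\dd s$ has been applied. The natural strategy is approximation: take $\psi_n \in C_\textup{c}^\infty(\R^N \times [0,T))$ as supplied by the definition of $X$, so that $|\psi_n - \phi|_{T,E_\lambda} \to 0$ and $\iint_{Q_T} \psi_n \varrho \to \iint_{Q_T} \phi \varrho$ for every $\varrho \in L^1(Q_T)$. Write the weak formulation of Definition \ref{energysoln}(ii) with test function $\Psi_n(x,t) := \int_t^T \psi_n(x,s)\,\dd s$ in place of $\psi$, then pass to the limit $n \to \infty$ in each of the three terms.

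First I would check that $\Psi_n$ is an admissible test function: $\Psi_n \in C_\textup{c}^\infty(\R^N \times [0,T))$ (it is smooth, compactly supported in $x$ uniformly in $t$, and $\Psi_n(\cdot,T) = 0$, though one should be slightly careful that $\Psi_n$ need not vanish near $t=0$ — it doesn't have to, since the test-function class only requires support away from $t=T$). Note the key identity $\dell_t \Psi_n(x,t) = -\psi_n(x,t)$. Substituting into Definition \ref{energysoln}(ii),
\begin{align*}
\int_0^T\left(-\int_{\R^N} u\,\psi_n\,\dd x - \mathcal{E}_\kernel\Big[\varphi(u),\int_t^T \psi_n(\cdot,s)\,\dd s\Big]\right)\dd t + \int_{\R^N} u_0(x)\left(\int_0^T \psi_n(x,s)\,\dd s\right)\dd x = 0.
\end{align*}
This is exactly the claimed identity with $\psi_n$ in place of $\phi$, so it remains to pass to the limit. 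For the first term, since $u \in L^1(Q_T)$ and $\psi_n \to \phi$ tested against $L^1$ functions (take $\varrho = u$), we get $\iint u\,\psi_n \to \iint u\,\phi$. For the third term, $u_0 \in L^1(\R^N)$ and $\int_0^T \psi_n(x,s)\,\dd s$ is bounded by $T\|\psi_n\|_{L^\infty}$ — but uniform $L^\infty$ bounds on $\psi_n$ are not given, so instead I would apply the $L^1$-testing property on $Q_T$ with $\varrho(x,s) = u_0(x)$ (viewing $u_0$ as $t$-independent, which is in $L^1(Q_T)$), using $\iint_{Q_T} u_0(x)\psi_n(x,s)\,\dd x\,\dd s \to \iint_{Q_T} u_0(x)\phi(x,s)\,\dd x\,\dd s = \int_{\R^N} u_0(x)\big(\int_0^T \phi(x,s)\,\dd s\big)\dd x$ by Tonelli.

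The main obstacle — and the step that actually uses the energy convergence — is the bilinear form term. Here I would use the Cauchy–Schwartz inequality of Lemma \ref{C-S} together with the time-integration estimate of Lemma \ref{T-lem}. Writing $g_n(x,t) := \int_t^T \psi_n(x,s)\,\dd s$ and $g(x,t) := \int_t^T \phi(x,s)\,\dd s$, bilinearity gives
\begin{align*}
\left|\int_0^T \mathcal{E}_\kernel[\varphi(u), g_n - g]\,\dd t\right| \leq |\varphi(u)|_{T,E_\lambda}\,|g_n - g|_{T,E_\lambda} \leq |\varphi(u)|_{T,E_\lambda}\cdot \frac{T}{\sqrt{2}}\,|\psi_n - \phi|_{T,E_\lambda} \to 0,
\end{align*}
using $\varphi(u) \in L^2(0,T;E_\lambda(\R^N))$ and Lemma \ref{T-lem} applied to $f = \psi_n - \phi$. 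One subtlety to verify: that $g(x,t) = \int_t^T \phi(\cdot,s)\,\dd s$ indeed lies in $L^2(0,T;E_\lambda(\R^N))$ so that $\mathcal{E}_\kernel[\varphi(u), g]$ is well-defined and finite — this is again exactly Lemma \ref{T-lem} applied to $f = \phi \in L^2(0,T;E_\lambda(\R^N)) \subset X$. Combining the three limits yields the identity in the statement. The only real care needed is making sure each passage to the limit uses only the two convergence properties built into the definition of $X$ (weak $L^1$-testing and energy-seminorm convergence), never an $L^\infty$ or strong convergence that we have not assumed.
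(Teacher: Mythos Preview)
Your proof is correct and follows essentially the same approach as the paper: approximate $\phi$ by $\psi_n\in C_\textup{c}^\infty(\R^N\times[0,T))$ from the definition of $X$, plug $\Psi_n(x,t)=\int_t^T\psi_n(x,s)\,\dd s$ into the energy formulation, and pass to the limit term by term using Lemmas~\ref{C-S} and~\ref{T-lem} for the bilinear-form term and the $L^1$-testing property of $X$ for the other two. Your treatment of the initial-data term (viewing $u_0$ as a $t$-independent element of $L^1(Q_T)$) and your constant $T/\sqrt{2}$ are in fact slightly more careful than the paper's own write-up.
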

In other words, we may take $\psi(x,t)=\int_t^T\phi(x,s)\,\dd s$ in
Definition \ref{energysoln} for $\phi\in X$. Note that the integrals are
well-defined: see Lemma \ref{T-lem}. From the proof below it follows
that the choice of space $X$ is (close to) optimal.

\begin{proof}
From the definition of $X$ there is $C_{\textup{c}}^\infty(\R^N\times
[0,T))\ni\phi_n\to \phi\in X$ for the convergence in $X$ as
$n\to\infty$. Let 
$$\psi(x,t):=\int_t^T\phi(x,s)\,\dd s\qquad\text{and}\qquad\psi_n(x,t):= \int_t^T\phi_n(x,s)\,\dd s.
$$
Observe that $\psi_n\in C_{\textup{c}}^\infty(\R^N\times [0,T))$ since $\phi_n$ is. 
By Cauchy-Schwartz' inequality, Lemma \ref{T-lem}, and the convergence in
$X$, we see that
\begin{align*}
\int_0^T\mathcal{E}_\kernel\Big[\varphi(u),(\psi_n-\psi)\Big]
\dd t \leq |\varphi(u)|_{T,E_\lambda}\frac{T^2}2|\phi_n-\phi|_{T,E_\lambda} \to
0\quad\text{as}\quad n\to\infty.
\end{align*}
Since $u\in L^1(Q_T)$ and $\phi_n$ converges in $X$, we also have
\begin{align*}\iint_{Q_T}u(\dell_t\psi_n-\dell_t\psi)\,\dd x\dd t
&=-\iint_{Q_T}u(\phi_n-\phi)\,\dd x\dd t\to  0\qquad\text{as}\qquad n\to\infty.
\end{align*}
In a similar way, $\int_{\R^N}u_0(x)(\psi_n-\psi)(x,0) \,\dd x\to 0$. 
The result now follows from taking $\psi=\psi_n$ in the definition of
energy solutions (Definition \ref{energysoln}), and using the above
estimates to pass to the limit.
\end{proof}

\begin{remark}
A closer inspection of the proof reveals that strong
$|\cdot|_{T,E_\lambda}$ convergence cannot be replaced
by the corresponding weak convergence. The reason is that the 
weak convergence property for the test functions $\phi_n$ is lost when they
are integrated in time to yield the $\psi_n$'s. 
\end{remark}

Note that for the proof of Lemma \ref{density}, the definition of $X$ is {\em essential} in the sense that we take those functions which can be approximated by $C_{\textup{c}}^\infty$-functions.
This lemma is crucial in the Ole\u{\i}nik  argument below because we want to take 
$$\psi(x,t)=\int_t^T(\varphi(u)-\varphi(v))(x,s)\,\dd s$$
as a test function. By Lemma \ref{density}, we need that
$\varphi(u),\varphi(v)\in X$ for this to be possible, and  
this explains this strange assumption and space.

\begin{proof}[Proof of Theorem \ref{generaluniquenessx} (Uniqueness 1)]
Assume there are two different energy solutions $u$ and $v$ of
\eqref{EE} with the same initial data \eqref{EIC}. Let
$U=u-v$ and $\Phi=\varphi(u)-\varphi(v)$, and note that the proof is
complete if we can show that $U=0$ a.e. in $Q_T$. 

To show that, we subtract the energy formulation of the equations for
$u$ and $v$ (Definition 
\ref{energysoln}). Since the initial data are the same, this leads to
\begin{equation}\label{energyformulation}
\int_{0}^T\left(\int_{\R^N}U\dell_t\psi\dd
x-\mathcal{E}_\kernel[\Phi,\psi] \right)\dd t=0 \quad\text{for
  all}\quad\psi\in C_{\textup{c}}^\infty(\R^ N\times[0,T)).
\end{equation}
Now we adapt the classical argument of Ole\u{\i}nik et al. \cite{OlKaCz58}
and seek to take
\begin{equation*}
\zeta(x,t)=\begin{cases}
\int_t^T\Phi(x,s)\,\dd s & 0\leq t< T\\
0 & t\geq T,
\end{cases}
\end{equation*}
as a test function. Since $\Phi\in X$ (by the definition of $\UC$), this can be done by Lemma
\ref{density}, and hence
\begin{equation}\label{energyformulation2}
\int_{0}^T\left(-\int_{\R^N}U\Phi\,\dd
x-\mathcal{E}_\kernel[\Phi,\zeta] \right)\dd t=0.
\end{equation}

Since $\int_0^T|\mathcal{E}_\kernel[\Phi,\zeta]| \dd t<\infty$ by
Lemma \ref{T-lem}, we have by Fubini's theorem
\begin{equation*}
\begin{split}
&\int_0^T\mathcal{E}_\kernel[\Phi,\zeta] \dd t\\
&=\frac{1}{2}\int_0^T\int_{\R^N}\int_{|z|>0}\big(\Phi(x+z,t)-\Phi(x,t)\big)\big(\zeta(x+z)-\zeta(x)\big)\,\kernel(x, \dd z)\dd x\dd t\\
&=\frac{1}{2}\int_{\R^N}\int_{|z|>0}\int_0^T\big(\Phi(x+z,t)-\Phi(x,t)\big)\times\\
  &\qquad\qquad\qquad\qquad\times\int_t^T\big(\Phi(x+z,s)-\Phi(x,s)\big)\, \dd s\dd t\kernel(x, \dd z)\dd x. \\
\end{split}
\end{equation*}
Then by the identity for $F\in L^1((0,T))$,
\begin{equation*}
\int_0^TF(t)\left(\int_t^TF(s)\dd s\right)\dd t=\int_0^T\int_t^TF(t)F(s)\,\dd s\dd t=\frac{1}{2}\left(\int_0^TF(t)\,\dd t\right)^2
\end{equation*}
(follows easily since $\int_0^T\int_t^T\dots
\dd s\dd t=\int_0^T\int_0^s\dots \dd t \dd s$), 
\begin{equation*}
\begin{split}
&\int_0^T\mathcal{E}_\kernel[\Phi,\zeta] \dd t=\frac{1}{4}\int_{\R^N}\int_{|z|>0}\left(\int_0^T\big(\Phi(x+z,t)-\Phi(x,t)\big)\dd t\right)^2\kernel(x, \dd z)\dd x\geq0.
\end{split}
\end{equation*}
Returning to \eqref{energyformulation2}, we then find that
\begin{equation*}
\int_{0}^T\int_{\R^N}U\Phi\,\dd x\dd t\leq0.
\end{equation*}

Since $\varphi$ is nondecreasing by \textup{($\textup{A}_\varphi$)},
$U\Phi\geq0$ a.e., and it then follows that $U\Phi=0$ a.e. in $Q_T$.
This means that at a.e. point, either $U=0$ or
$\Phi=0$, and hence since $U=0$ implies $\Phi=0$ by definition,
$$\Phi=0\qquad a.e.\quad\text{in}\quad Q_T.$$
 Then by equation \eqref{energyformulation},
$$
\int_0^T\int_{\R^N}U\dell_t\psi\,\dd x\dd t=0 \qquad\text{for
  all}\qquad \psi\in C_{\textup{c}}^\infty(\R^N\times[0,T)).$$
Since $\psi(x,t):=\int_t^T\phi(x,s)\,\dd s\in C_{\textup{c}}^\infty(\R^N\times[0,T))$ for arbitrary $\phi\in
C_{\textup{c}}^\infty(Q_T)$,
$$
-\int_0^T\int_{\R^N}U\phi\,\dd x\dd t=0 \qquad\text{for
  all}\qquad \phi\in C_{\textup{c}}^\infty(Q_T),$$
and hence $U=0$ a.e. in $Q_T$ by du Bois-Reymond's lemma.
\end{proof}


\section{Distributional solutions with finite energy}
\label{sec:propdistsoln}

Our main focus in this section is to prove Theorems \ref{equivalencedistrenergy}, \ref{ex_Dsol}, \ref{e-estim}, and \ref{Lp-estim}. First, we prove the equivalence of notions of solutions. Second, we consider an approximate problem of \eqref{E}--\eqref{IC}. The energy and $L^p$-estimates are then shown to hold for the solution of that problem. A compactness result will give us convergence of solutions of the approximate problem, and we thus obtain existence of some limit solution of the full problem satisfying the same estimates.

We recall that (i) $\Levy^\mu[\psi]$ is well-defined for $\psi\in C^2(\R^N)\cap L^\infty(\R^N)$; (ii) $\Levy^\mu[\psi]$ is bounded in $L^1/L^\infty$ for $\psi\in W^{2,1}/W^{2,\infty}$; and (iii) $\Levy^\mu$ is symmetric for e.g. functions in $W^{2,1}/W^{2,\infty}$ (see Lemma 3.5 in \cite{DTEnJa15}). Note also that for $\mu$ replaced by $\mu_r:=\mu\mathbf{1}_{|z|>r}$, (i)--(iii) holds when we only assume that $\psi$ is in $L^\infty$, $L^1/L^\infty$, and $L^1/L^\infty$ (see Remark 3.6 (b) in \cite{DTEnJa15}). 

\subsection{Equivalent notions of solutions} 
\label{sec:equivnotionofsolutions}

We establish the relation between the ($x$-independent) bilinear form and our L\'evy operator, as a consequence, we get equivalence of energy and distributional solutions under certain conditions.

\begin{proposition}\label{bilinearenergy}
Assume \textup{(\textup{A}$_\mu$)}. For any $\psi\in C_{\textup{c}}^\infty(\R^N)$, and $v\in L^\infty(\R^N)\cap E_\mu(\R^N)$, we have
\begin{equation*}
\begin{split}
&\int_{\R^N}v(x)\Levy^{\mu}[\psi](x)\,\dd x\\
&=-\frac{1}{2}\int_{\R^N}\int_{|z|>0}\big(v(x+z)-v(x)\big)\big(\psi(x+z)-\psi(x)\big)\,\mu(\dd z)\dd x=-\mathcal{E}_\mu[v,\psi].
\end{split}
\end{equation*}
\end{proposition}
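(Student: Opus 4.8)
The plan is to compute $\int_{\R^N} v\,\Levy^\mu[\psi]\,\dd x$ directly from the integral representation \eqref{deflevy} of $\Levy^\mu$, expand the three terms, and symmetrize each using a change of variables $z\to-z$ together with the evenness of $\mu$ (assumption \textup{(A$_\mu$)}). The target identity is really just the statement that the Dirichlet form $\mathcal{E}_\mu$ defined in \eqref{energy} is the one generated by $\Levy^\mu$ in the sense of \eqref{defgen}; the content is to make this rigorous for $v$ merely in $L^\infty\cap E_\mu$ rather than in $C_{\textup{c}}^\infty$.

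First I would handle the easy case where $\mu$ is truncated away from the origin: set $\mu_r:=\mu\mathbf{1}_{|z|>r}$, for which the compensator $z\cdot D\psi(x)\indikator$ may be dropped (it integrates to zero against $\psi$ by evenness of $\mu_r$ and compact support, or is simply absorbed), and $\Levy^{\mu_r}[\psi](x)=\int_{|z|>r}(\psi(x+z)-\psi(x))\,\mu(\dd z)$ is bounded. Here $v\,\Levy^{\mu_r}[\psi]\in L^1$ since $v\in L^\infty$ and $\psi$ has compact support (so $\Levy^{\mu_r}[\psi]\in L^1$ by the bound in (ii) from Section~\ref{sec:propdistsoln}, or directly). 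Then Fubini's theorem applies to
\begin{equation*}
\int_{\R^N}\int_{|z|>r}v(x)\big(\psi(x+z)-\psi(x)\big)\,\mu(\dd z)\dd x,
\end{equation*}
and in the $\psi(x+z)$ term I substitute $x\mapsto x-z$ and then $z\mapsto -z$ (using $\mu(-\dd z)=\mu(\dd z)$) to rewrite it as $\int\int v(x-z)\psi(x)\,\mu(\dd z)\dd x = \int\int v(x+z)\psi(x)\,\mu(\dd z)\dd x$. Combining, the whole expression equals $\frac12\int\int (v(x+z)+v(x-z)-2v(x))\psi(x)\,\mu(\dd z)\dd x$; a further symmetrization of $\psi$ in the same manner converts this into $-\frac12\int\int (v(x+z)-v(x))(\psi(x+z)-\psi(x))\,\mu(\dd z)\dd x = -\mathcal{E}_{\mu_r}[v,\psi]$. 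All manipulations are legitimate because everything is absolutely integrable once $\mu$ is truncated: the integrand is bounded by $C\|v\|_{L^\infty}(|\psi(x+z)|+|\psi(x)|)$ and $\mu(|z|>r)<\infty$.

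The main obstacle is passing $r\to 0^+$. On the left, one uses that $\Levy^{\mu_r}[\psi]\to\Levy^\mu[\psi]$ in, say, $L^1(\R^N)$ (this follows from \textup{(A$_\mu$)} and $\psi\in C_{\textup{c}}^\infty$, via the second-order Taylor estimate $|\psi(x+z)-\psi(x)-z\cdot D\psi(x)\indikator|\le C(|z|^2\wedge 1)\mathbf{1}_{B}(x)$ on a fixed compact set $B$ depending on $\supp\psi$ and $r\le 1$), so that $\int v\,\Levy^{\mu_r}[\psi]\to\int v\,\Levy^\mu[\psi]$ since $v\in L^\infty$. On the right, one shows $\mathcal{E}_{\mu_r}[v,\psi]\to\mathcal{E}_\mu[v,\psi]$: the contribution from $|z|>1$ is already independent of $r$ once $r<1$; for $|z|\le 1$ one dominates $|(v(x+z)-v(x))(\psi(x+z)-\psi(x))|\le \|D\psi\|_{L^\infty}|z|\,|v(x+z)-v(x)|$ and applies Cauchy--Schwarz in $\mu(\dd z)\dd x$ to bound it by $\|D\psi\|_{L^\infty}\big(\int_{|z|\le 1}|z|^2\mu(\dd z)\big)^{1/2}|v|_{E_\mu}$ — wait, more carefully: write it as $(v(x+z)-v(x))\cdot(\psi(x+z)-\psi(x))$ and note that $(v(x+z)-v(x))^2\,\mu(\dd z)\dd x$ is integrable (it is $2|v|_{E_\mu}^2<\infty$) while $(\psi(x+z)-\psi(x))^2\le \|D\psi\|_{L^\infty}^2|z|^2$ is $\mu$-integrable over $|z|\le 1$ and supported near $\supp\psi$; hence the full integrand over $\{|z|\le 1\}$ is in $L^1(\mu(\dd z)\dd x)$, and dominated convergence gives the limit as $r\to 0$. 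Since $v\in E_\mu(\R^N)$ is exactly the hypothesis that makes $|v|_{E_\mu}<\infty$, this is where that assumption is used. Passing to the limit on both sides of the truncated identity yields the claim.
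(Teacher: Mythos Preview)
Your proof is correct and follows the same truncation-then-limit strategy as the paper: establish the identity for the bounded measure $\mu_r=\mu\mathbf{1}_{|z|>r}$ and then let $r\to0^+$ via dominated convergence, using $v\in E_\mu(\R^N)$ precisely to control the bilinear form side. The only minor difference is in how the truncated identity is obtained: you argue it directly by change of variables and symmetrization, whereas the paper invokes its Lemma~\ref{simpleStroock-Var} (proved by mollifying $f,g$ and using the product-rule expansion of $(f_ng_n)(x+z)-(f_ng_n)(x)$ together with $\int\Levy^\nu[f_ng_n]=0$), which it also reuses elsewhere.
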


\begin{remark}
The result holds as long as both sides make sense.
\end{remark}

\begin{lemma}\label{simpleStroock-Var}
Assume that $\nu\geq0$ is an even Radon measure with $\nu(\R^N)<\infty$, and $1\leq p,q\leq \infty$ with $\frac{1}{p}+\frac{1}{q}=1$. For any $f\in L^p(\R^N)$ and $g\in L^q(\R^N)$, we have
$$
\int_{\R^N}g(x)\Levy^{\nu}[f](x)\,\dd x=-\mathcal{E}_\nu[f,g].
$$
\end{lemma}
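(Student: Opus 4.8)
The plan is to reduce the statement for a general finite even measure $\nu$ to the already-established Proposition \ref{bilinearenergy} by a density argument in $\psi$, using that when $\nu(\R^N)<\infty$ the operator $\Levy^\nu$ and the form $\mathcal E_\nu$ both extend nicely beyond compactly supported smooth functions. Concretely, since $\nu$ is finite, $\Levy^\nu[f](x)=\int_{\R^N}(f(x+z)-f(x))\,\nu(\dd z)$ is well-defined for $f\in L^p$ (no gradient correction is needed because $\indikator$ disappears: with $\nu$ finite the singular part is absent, or rather the $z\cdot D\phi\,\indikator$ term integrates against a finite measure and can be absorbed — one should note $\Levy^\nu[\phi](x)=\int(\phi(x+z)-\phi(x))\nu(\dd z)$ for smooth $\phi$ because the odd part $\int z\indikator\,\nu(\dd z)$ vanishes by evenness of $\nu$). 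By Young/Minkowski, $\Levy^\nu:L^p\to L^p$ is bounded with norm $\le 2\nu(\R^N)$, and similarly $|\mathcal E_\nu[f,g]|\le 2\nu(\R^N)\|f\|_{L^p}\|g\|_{L^q}$ via a change of variables $x\mapsto x+z$ and Hölder under the finite measure $\nu(\dd z)\,\dd x$.

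First I would record the bilinear identity
$$\int_{\R^N}g(x)\Levy^\nu[f](x)\,\dd x=-\frac12\int_{\R^N}\int_{|z|>0}\big(f(x+z)-f(x)\big)\big(g(x+z)-g(x)\big)\,\nu(\dd z)\,\dd x$$
for $f,g\in C_{\textup{c}}^\infty(\R^N)$; this is exactly Proposition \ref{bilinearenergy} applied to the finite measure $\nu$ in place of $\mu$ (the hypotheses of that proposition hold for any finite even $\nu$, and every $C_{\textup{c}}^\infty$ function lies in $L^\infty\cap E_\nu$). Then I would fix $f\in L^p$, $g\in L^q$ and pick mollified/truncated approximations $f_k,g_k\in C_{\textup{c}}^\infty$ with $f_k\to f$ in $L^p$ and $g_k\to g$ in $L^q$ (and, if $p$ or $q=\infty$, argue instead by duality or by approximating the $L^\infty$ factor in the weak-$*$ sense while exploiting that the other factor is in $L^1$). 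Both sides of the identity are jointly continuous in $(f,g)$ for these topologies, using the $L^p\to L^p$ boundedness of $\Levy^\nu$ on the left and the Hölder bound $|\mathcal E_\nu[f_k,g_k]-\mathcal E_\nu[f,g]|\le 2\nu(\R^N)\big(\|f_k-f\|_{L^p}\|g_k\|_{L^q}+\|f\|_{L^p}\|g_k-g\|_{L^q}\big)$ on the right. Passing to the limit gives the identity for all $f\in L^p$, $g\in L^q$.

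The endpoint cases $p=1,q=\infty$ (and symmetrically) need slightly more care since $C_{\textup{c}}^\infty$ is not dense in $L^\infty$: there I would keep $f\in L^1$ approximated strongly by $f_k\in C_{\textup{c}}^\infty$, and for $g\in L^\infty$ use that $\Levy^\nu[f_k]\to\Levy^\nu[f]$ in $L^1$, so $\int g\,\Levy^\nu[f_k]\to\int g\,\Levy^\nu[f]$; on the right, $(f_k(x+z)-f_k(x))\to(f(x+z)-f(x))$ in $L^1(\nu(\dd z)\,\dd x)$ while the factor $(g(x+z)-g(x))$ is uniformly bounded, so dominated convergence applies. The only genuine obstacle is bookkeeping at these endpoints and making sure Fubini/Tonelli is justified when rearranging $\int\int$ — but this is immediate since $\nu$ is a finite measure and, after the substitution $x\mapsto x+z$, the integrand is in $L^1(\R^N\times(\R^N\setminus\{0\}),\dd x\otimes\nu)$ by the product bound above. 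No step is deep; the content is entirely in reducing to Proposition \ref{bilinearenergy} and invoking the finiteness of $\nu$ to get the required continuity.
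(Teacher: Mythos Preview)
Your reduction to Proposition~\ref{bilinearenergy} is circular: in the paper that proposition is \emph{proved from} Lemma~\ref{simpleStroock-Var} (by taking $\nu=\mu_r=\mu\mathbf{1}_{|z|>r}$ and letting $r\to0^+$). So you cannot invoke it here to get the $C_{\textup{c}}^\infty$ case for free. The fix is easy and is precisely what the paper does: establish the identity for smooth functions directly. With $\nu$ finite and even one has, for smooth $f,g$ (mollified versions of the given $L^p$/$L^q$ functions), the algebraic identity
\[
(f g)(x+z)-(f g)(x)-z\cdot D(fg)(x)=f(x)\Big(\cdots\Big)+g(x)\Big(\cdots\Big)+(f(x+z)-f(x))(g(x+z)-g(x)),
\]
and integrating against $\nu(\dd z)\,\dd x$ together with $\int_{\R^N}\Levy^\nu[fg]\,\dd x=0$ (Fubini plus evenness) and the symmetry of $\Levy^\nu$ yields the desired identity at the approximate level. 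Equivalently, a direct change of variables ($z\to -z$, $x\to x+z$) on half of $\iint g(x)(f(x+z)-f(x))\,\nu(\dd z)\,\dd x$ gives $-\mathcal E_\nu[f,g]$ in one line.

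Apart from this circularity, your approximation-and-limit strategy matches the paper's proof: mollify, pass to the limit using the $L^p$ continuity bounds $\|\Levy^\nu h\|_{L^p}\le 2\nu(\R^N)\|h\|_{L^p}$ and $|\mathcal E_\nu[f,g]|\le 2\nu(\R^N)\|f\|_{L^p}\|g\|_{L^q}$, and treat the endpoint $p=1,\ q=\infty$ by sending the $L^\infty$ approximation to its limit first via dominated convergence (the paper mollifies both functions and sends $g_m\to g$ pointwise a.e.\ with a uniform bound, then $f_n\to f$ in $L^1$). So once you replace the appeal to Proposition~\ref{bilinearenergy} by the direct smooth computation, your argument is essentially the paper's.
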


This proof is postponed to Appendix \ref{sec:technicalresult}. 

\begin{proof}[Proof of Proposition \ref{bilinearenergy}]
Replace $\nu$ by $\mu_r=\mu\mathbf{1}_{|z|>r}$ in Lemma \ref{simpleStroock-Var}, and let $g=v$ and $f=\psi$. Then the result follows by Lebesgue's dominated convergence theorem as $r\to0^+$ since $\mathbf{1}_{|z|>r}\leq 1$.
\end{proof}

\begin{proof}[Proof of Theorem \ref{equivalencedistrenergy} (Equivalent notions of solutions)]
\noindent $(a)\implies (b)$ In Definition \ref{energysoln}, we have that $|\varphi(u)|_{T,E_{\mu}}<\infty$, and then we can use Proposition \ref{bilinearenergy} to obtain (note that $\varphi(u)\in L^\infty(\R^N)$)
\begin{equation*}
\int_0^T\int_{\R^N}u\dell_t\psi +\varphi(u)\Levy^\mu[\psi]\dd x\dd t+\int_{\R^N}u_0(x)\psi(x,0)\dd x=0 \text{ }\forall\psi\in
  C_\textup{c}^\infty(\R^N\times[0,T)).
\end{equation*}

\medskip
\noindent $(b) \implies (a)$ We write Definition \ref{distsol} in the following way
\begin{equation*}
\begin{split}
\int_0^T\bigg(\int_{\R^N}u\dell_t\psi\dd x &+ \int_{\R^N}\varphi(u)\Levy^{\mu}[\psi] \dd x\bigg)\dd t\\
&\qquad+\int_{\R^N}u_0(x)\psi(x,0)\dd x=0 \quad\forall\psi\in C_\textup{c}^\infty(\R^N\times[0,T)).
\end{split}
\end{equation*}
By the assumptions, $|\varphi(u)|_{T,E_\mu}<\infty$, and hence, we can
use Proposition \ref{bilinearenergy} in the other direction to get
energy solutions. 
\end{proof}

\subsection{The approximate problem of \eqref{E}--\eqref{IC}}
\label{sec:approxEconv}

By using a priori and existence results for a simplified version of \eqref{E}--\eqref{IC}, we can take the limit of a sequence of solutions of such problems, and then conclude that some limit solution of the full problem exists and enjoys the energy and $L^p$-estimates.

Let $\omega_n$ be a family of mollifiers defined by
\begin{align}
\omega_n(\sigma):=n^N\omega\left(n\sigma\right)
\label{mollifierspace}
\end{align}
for fixed $0\leq\omega\in C_\textup{c}^\infty(\R^N)$ with $\text{supp}\,
\omega\subseteq \overline{B}(0,1)$, $\omega(\sigma)=\omega(-\sigma),$ 
$\int\omega=1$, and define
\begin{equation}\label{phiapprox}
\varphi_n(x):=\varphi\ast\omega_n(x)-\varphi\ast\omega_n(0)\text{ where $\omega_n$ is given by \eqref{mollifierspace} with $N=1$.}
\end{equation}
Now, consider the following approximation of \eqref{E}--\eqref{IC} where the measure $\mu$ is replaced by $\mu_r=\mu\mathbf{1}_{|z|>r}$ and the nonlinear diffusion flux $\varphi$ is replaced by $\varphi_n$:
\begin{align}\label{approxE}
\dell_t u_{r,n}-\Levy^{\mu_r}[\varphi_{n}(u_{r,n})]&=0 && \text{in}\quad Q_T,\\
u_{r,n}(x,0)&=u_0(x) &&\text{on}\quad \R^N, \label{approxIC}
\end{align}
with
\begin{equation*}
\Levy^{\mu_r}[\psi](x)=\int_{|z|>0}\big(\psi(x+z)-\psi(x)\big) \,\mu_r(\dd z).
\end{equation*}
Note that $\varphi_n\in C^\infty(\R)$ (and hence, locally Lipschitz), $\varphi_n(0)=0$, and $\varphi_n\to\varphi$ locally uniformly on $\R$ by \textup{(\textup{A}$_\varphi$)}, the properties of mollifiers, and Remark \ref{consequencesassumptions} (f). Furthermore, recall that for any $r>0$, the operator $\Levy^{\mu_r}[\psi]$ is well-defined for merely bounded $\psi$.

\begin{remark}\label{rem:propapproxE}
Since \eqref{approxE}--\eqref{approxIC} is just a special case of \eqref{E}--\eqref{IC}, existence, uniqueness, (uniform) $L^1$-, $L^\infty$-bounds, and time regularity holds for \eqref{approxE}--\eqref{approxIC} by Theorem 2.10 in \cite{DTEnJa15} or by \cite{DTEnJa16} through limit procedures and compactness results for entropy or numerical solutions.
\end{remark}

\begin{theorem}[Existence and uniqueness, Theorem 2.8 in \cite{DTEnJa15}]\label{ex:approx}
Assume \textup{(\textup{A}$_\varphi$)}, \textup{(\textup{A}$_\mu$)}, and
\textup{(\textup{A}$_{u_0}$)}. Then there exists a unique distributional solution $u_{r,n}$
of \eqref{approxE}--\eqref{approxIC} satisfying  
$$u_{r,n}\in L^1(Q_T)\cap L^\infty(Q_T)\cap 
C([0,T];L_\textup{loc}^1(\R^N)).$$
\end{theorem}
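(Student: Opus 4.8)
The plan is to recognize that the approximate Cauchy problem \eqref{approxE}--\eqref{approxIC} is simply a particular instance of the original problem \eqref{E}--\eqref{IC}, so that the statement reduces to checking that the regularized data $(\mu_r,\varphi_n,u_0)$ satisfy the structural hypotheses under which well-posedness of \eqref{E}--\eqref{IC} is already known.

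First I would verify the hypotheses for the perturbed data. The truncated measure $\mu_r:=\mu\mathbf{1}_{|z|>r}$ is a nonnegative even Radon measure on $\R^N\setminus\{0\}$, and it is in fact \emph{finite}:
\[
\mu_r(\R^N)=\int_{r<|z|\leq1}\mu(\dd z)+\int_{|z|>1}\mu(\dd z)\leq r^{-2}\int_{|z|\leq1}|z|^2\,\mu(\dd z)+\int_{|z|>1}1\,\mu(\dd z)<\infty
\]
by \textup{(\textup{A}$_\mu$)}. In particular $\int_{|z|\leq1}|z|^2\mu_r(\dd z)+\int_{|z|>1}\mu_r(\dd z)<\infty$, so \textup{(\textup{A}$_\mu$)} holds with $\mu$ replaced by $\mu_r$ (and $\Levy^{\mu_r}$ is the drift-free operator displayed above). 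Next, by \eqref{phiapprox} the function $\varphi_n$ is a convolution of the continuous nondecreasing $\varphi$ with the nonnegative mollifier $\omega_n$, translated by a constant; hence $\varphi_n$ is continuous, nondecreasing, and $\varphi_n(0)=0$, so \textup{(\textup{A}$_\varphi$)} holds for $\varphi_n$ (with, in addition, $\varphi_n\in C^\infty(\R)$, hence locally Lipschitz). Finally, $u_0$ is unchanged and satisfies \textup{(\textup{A}$_{u_0}$)}.

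Given these checks, the conclusion is immediate: the well-posedness theory for \eqref{E}--\eqref{IC} --- as recalled in Remark \ref{rem:propapproxE}, i.e.\ Theorem 2.8 in \cite{DTEnJa15} together with the construction via entropy and monotone numerical approximations in \cite{DTEnJa16} --- applied to the admissible data $(\mu_r,\varphi_n,u_0)$ yields a unique distributional solution $u_{r,n}$ of \eqref{approxE}--\eqref{approxIC} in the class $L^1(Q_T)\cap L^\infty(Q_T)\cap C([0,T];L^1_\textup{loc}(\R^N))$.

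I do not expect a genuine obstacle here, since all the analytic work is already contained in the cited results; the only point requiring care is that the regularizations $\mu\mapsto\mu_r$ and $\varphi\mapsto\varphi_n$ stay inside the admissible class of data, which is precisely what the verification above provides. Should a self-contained argument for this particular problem be preferred, the natural route is a fixed-point argument in $C([0,T];L^1(\R^N)\cap L^\infty(\R^N))$ exploiting that $\Levy^{\mu_r}$ is a bounded linear operator on $L^1(\R^N)$ and on $L^\infty(\R^N)$ and that $\varphi_n$ is Lipschitz on the invariant range $[-\|u_0\|_{L^\infty(\R^N)},\|u_0\|_{L^\infty(\R^N)}]$; in that approach the real work would be the time-uniform $L^1$ and $L^\infty$ a priori bounds and the $L^1$-contraction giving uniqueness, both consequences of a comparison principle for \eqref{approxE}.
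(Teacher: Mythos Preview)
Your proposal is correct and mirrors the paper's own treatment: the paper does not give an independent proof of this theorem but simply notes (Remark~\ref{rem:propapproxE}) that \eqref{approxE}--\eqref{approxIC} is a special case of \eqref{E}--\eqref{IC} and then invokes Theorem~2.8 (and Theorem~2.10) of \cite{DTEnJa15}. Your explicit verification that $\mu_r$ and $\varphi_n$ satisfy \textup{(A$_\mu$)} and \textup{(A$_\varphi$)} is exactly the ``special case'' observation the paper relies on.
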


Now we first prove that \eqref{approxE} holds a.e., and then we deduce energy and clean $L^p$-estimates (the latter by a Stroock-Varopoulos type result) from the rather general inequality in Proposition \ref{Psiineq} below. 

\begin{lemma}\label{timeregularityapproxE}
Assume \textup{(\textup{A}$_\varphi$)}, \textup{(\textup{A}$_\mu$)}, and \textup{(\textup{A}$_{u_0}$)}. Then the distributional solution $u_{r,n}$ of \eqref{approxE}--\eqref{approxIC}
with initial data $u_0$ satisfies
$$
\dell_tu_{r,n}\in L^1(Q_T)\cap L^\infty(Q_T)\qquad\text{and}\qquad\dell_t u_{r,n}=\Levy^{\mu_r}[\varphi_n(u_{r,n})]\text{ a.e. in }Q_T.
$$
\end{lemma}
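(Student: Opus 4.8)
The plan is to read off from the distributional formulation of \eqref{approxE}--\eqref{approxIC} that the distributional time derivative of $u_{r,n}$ equals the fixed function $g:=\Levy^{\mu_r}[\varphi_n(u_{r,n})]$, and then to observe that this function lies in $L^1(Q_T)\cap L^\infty(Q_T)$. The only work is to move $\Levy^{\mu_r}$ off the test function and onto $\varphi_n(u_{r,n})$, which is legitimate because $\mu_r$ is a \emph{finite} measure and $\Levy^{\mu_r}$ is then a bounded, symmetric operator on $L^1$/$L^\infty$.

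First I would record the elementary properties of $\Levy^{\mu_r}$. Since $\mu_r=\mu\mathbf 1_{|z|>r}$ and \textup{(\textup{A}$_\mu$)} holds, $\mu_r$ is an even Radon measure with $\mu_r(\R^N)\le r^{-2}\int_{|z|\le1}|z|^2\,\mu(\dd z)+\int_{|z|>1}\mu(\dd z)<\infty$; consequently $\|\Levy^{\mu_r}[h]\|_{L^p(\R^N)}\le 2\mu_r(\R^N)\|h\|_{L^p(\R^N)}$ for every $p\in[1,\infty]$, and by Lemma \ref{simpleStroock-Var} (used once with $(p,q)=(\infty,1)$ and once with $(p,q)=(1,\infty)$) together with the symmetry of the bilinear form $\mathcal E_{\mu_r}$, one gets $\int_{\R^N}h\,\Levy^{\mu_r}[\chi]\,\dd x=-\mathcal E_{\mu_r}[\chi,h]=-\mathcal E_{\mu_r}[h,\chi]=\int_{\R^N}\chi\,\Levy^{\mu_r}[h]\,\dd x$ for all $\chi\in C_{\textup c}^\infty(\R^N)$ and $h\in L^1(\R^N)\cap L^\infty(\R^N)$ (this is also \cite{DTEnJa15}, Remark 3.6(b)). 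Next I would check $g\in L^1(Q_T)\cap L^\infty(Q_T)$: by Theorem \ref{ex:approx} and Remark \ref{rem:propapproxE}, $u_{r,n}\in L^1(Q_T)\cap L^\infty(Q_T)$ with $\|u_{r,n}(\cdot,t)\|_{L^\infty(\R^N)}\le K:=\|u_{r,n}\|_{L^\infty(Q_T)}$ for a.e. $t$; since $\varphi_n$ is locally Lipschitz with $\varphi_n(0)=0$, we have $|\varphi_n(u_{r,n})|\le L_n|u_{r,n}|$ with $L_n:=\Lip(\varphi_n;[-K,K])$, so $\varphi_n(u_{r,n})\in L^1(Q_T)\cap L^\infty(Q_T)$ and $\varphi_n(u_{r,n}(\cdot,t))\in L^1(\R^N)\cap L^\infty(\R^N)$ for a.e. $t$, with spatial norms bounded by $L_n\|u_{r,n}(\cdot,t)\|_{L^1(\R^N)}$ (integrable in $t$) and $L_nK$. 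Applying the mapping bounds for $\Levy^{\mu_r}$ then yields $g\in L^1(Q_T)\cap L^\infty(Q_T)$.

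Then I would start from the distributional formulation of \eqref{approxE}--\eqref{approxIC},
$$\iint_{Q_T}\big(u_{r,n}\,\dell_t\psi+\varphi_n(u_{r,n})\,\Levy^{\mu_r}[\psi]\big)\,\dd x\dd t+\int_{\R^N}u_0(x)\psi(x,0)\,\dd x=0,\qquad \psi\in C_{\textup c}^\infty(\R^N\times[0,T)),$$
apply Fubini to the second term (the integrand is absolutely integrable since $\iint_{Q_T}|\varphi_n(u_{r,n})||\Levy^{\mu_r}[\psi]|\le\|\Levy^{\mu_r}[\psi]\|_{L^\infty(Q_T)}\|\varphi_n(u_{r,n})\|_{L^1(Q_T)}<\infty$), and use the slicewise symmetry identity with $\chi=\psi(\cdot,t)\in C_{\textup c}^\infty(\R^N)$ and $h=\varphi_n(u_{r,n}(\cdot,t))$ to rewrite it as $\iint_{Q_T}\psi\,g\,\dd x\dd t$. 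This gives $\iint_{Q_T}(u_{r,n}\,\dell_t\psi+g\,\psi)\,\dd x\dd t+\int_{\R^N}u_0(x)\psi(x,0)\,\dd x=0$ for all such $\psi$; restricting to $\psi\in C_{\textup c}^\infty(Q_T)$ kills the initial term and shows $\dell_t u_{r,n}=g$ in $\mathcal{D}'(Q_T)$. Since $g\in L^1_{\textup{loc}}(Q_T)$, the weak time derivative of $u_{r,n}$ is the function $g$, hence $\dell_t u_{r,n}=\Levy^{\mu_r}[\varphi_n(u_{r,n})]$ a.e. in $Q_T$, and $\dell_t u_{r,n}\in L^1(Q_T)\cap L^\infty(Q_T)$ is inherited from $g$.

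The only delicate point is the slicewise symmetrization of $\Levy^{\mu_r}$ under the weak regularity available here (only $L^1\cap L^\infty$ in space, not $W^{2,1}$ or $W^{2,\infty}$): this is exactly where the finiteness of $\mu_r$ enters, through Lemma \ref{simpleStroock-Var} (equivalently \cite{DTEnJa15}, Lemma 3.5 and Remark 3.6(b)). Everything else is routine bookkeeping with the uniform $L^1$/$L^\infty$ bounds for $u_{r,n}$ and the boundedness of $\Levy^{\mu_r}$ on $L^p$.
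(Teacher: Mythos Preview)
Your proof is correct and follows the same strategy as the paper: use the symmetry of $\Levy^{\mu_r}$ (valid because $\mu_r$ is finite) to move the operator onto $\varphi_n(u_{r,n})$, identify $g:=\Levy^{\mu_r}[\varphi_n(u_{r,n})]\in L^1(Q_T)\cap L^\infty(Q_T)$ as the distributional time derivative, and conclude. The only difference is at the very end: you stop once the distributional derivative is identified with the $L^1_{\textup{loc}}$ function $g$, whereas the paper adds an explicit difference quotient computation showing $\frac{u_{r,n}(\cdot,\cdot+h)-u_{r,n}}{h}\to g$ in $L^1(Q_T)$, i.e.\ strong $L^1$-differentiability in time. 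That extra step is not required for the lemma as stated (a distributional derivative equal to an integrable function \emph{is} the weak derivative, and the ``a.e.'' equality then follows), but it makes the subsequent pointwise manipulation and Sobolev chain rule in Proposition~\ref{Psiineq} more transparently justified. Your more economical route is fine; if a referee pressed you on the ``a.e.'' claim, you could either cite the standard fact that $u_{r,n}(x,\cdot)\in W^{1,1}_{\textup{loc}}(0,T)$ for a.e.\ $x$ is absolutely continuous, or reproduce the paper's difference quotient argument.
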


\begin{proof}
By the definition of distributional solutions for \eqref{approxE}--\eqref{approxIC} and the symmetry of $\Levy^{\mu_r}$,
$$
-\iint_{Q_T}u_{r,n}\dell_t\psi\, \dd x\dd t=\iint_{Q_T}\varphi_n(u_{r,n})\Levy^{\mu_r}[\psi]\, \dd x \dd t=\iint_{Q_T}\Levy^{\mu_r}[\varphi_n(u_{r,n})]\psi\, \dd x \dd t.
$$
Hence, $\Levy^{\mu_r}[\varphi_n(u_{r,n})]$ is the weak time derivative of $u_{r,n}$. Since $\varphi_n\in W^{1,\infty}(\R)$, $\varphi_n(u_{r,n})\in L^1\cap L^\infty$ and hence, we get that
$g:=\Levy^{\mu_r}[\varphi_n(u_{r,n})]\in L^1(Q_T)\cap L^\infty(Q_T)$.

Assume also $u_{r,n}\in C^1$. Then $\partial_t u_{r,n}=g$ and we can use
the Fundamental theorem of calculus to see that
$$
\left\|\frac{u_{r,n}(\cdot,\cdot+h)-u_{r,n}}{h}-g\right\|_{L^1(Q_T)}\leq\int_0^1\|g(\cdot,\cdot+sh)-g\|_{L^1(Q_T)}\dd s.
$$
By an approximation argument in $L^1$, this inequality holds also without
the $C^1$ assumption.
Taking the limit as $h\to0^+$ (the right-hand side goes to zero by Lebesgue's dominated convergence theorem since translations in $L^1$ are continuous), we obtain that
$$
\lim_{h\to0^+}\frac{u_{r,n}(x,t+h)-u_{r,n}(x,t)}{h}=g(x,t)\qquad\text{in $L^1(Q_T)$},
$$
and hence, $\dell_tu_{r,n}$ exists and equals $g$ a.e. in $Q_T$.
\end{proof}

To prove the next result, we need to define cut-off functions: Consider $\mathcal{X}\in C_{\textup{c}}^\infty(\R^N)$ such that $\mathcal{X}\geq0$, $\mathcal{X}=1$ when $|x|\leq1$, and $\mathcal{X}=0$ when $|x|>2$,
and define 
\begin{equation}\label{standardcut-off}
\mathcal{X}_R(\cdot):=\mathcal{X}\left(\frac{\cdot}{R}\right)\in C_{\textup{c}}^\infty(\R^N)\qquad\text{for}\qquad R>0. 
\end{equation}

\begin{proposition}\label{Psiineq}
Assume \textup{(\textup{A}$_\varphi$)}, \textup{(\textup{A}$_\mu$)}, \textup{(\textup{A}$_{u_0}$)}, and $0<\tau\leq T$. Let $\Psi\in W_\textup{loc}^{1,\infty}(\R)$ with $\Psi(0)=0$. Then the distributional solution $u_{r,n}$
of \eqref{approxE}--\eqref{approxIC} satisfies
\begin{equation*}
\begin{split}
&\int_{\R^N}\Psi(u_{r,n}(x,\tau))\dd x-\int_0^\tau\int_{\R^N}\Psi'(u_{r,n}(x,t))\Levy^{\mu_r}[\varphi_n(u_{r,n}(x,t))]\dd x\dd t\\
&\quad =\int_{\R^N}\Psi(u_0(x))\dd x.
\end{split}
\end{equation*}
\end{proposition}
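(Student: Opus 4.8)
The plan is to test the pointwise identity $\dell_t u_{r,n}=\Levy^{\mu_r}[\varphi_n(u_{r,n})]$ a.e.\ in $Q_T$, supplied by Lemma~\ref{timeregularityapproxE}, against the bounded multiplier $\Psi'(u_{r,n})$ and integrate over $\R^N\times(0,\tau)$. With this choice the $\Levy^{\mu_r}$-term of the claimed identity is precisely $\int_0^\tau\int_{\R^N}\Psi'(u_{r,n})\,\dell_tu_{r,n}\,\dd x\dd t$, so the whole argument reduces to recognising this integrand as $\dell_t\big(\Psi(u_{r,n})\big)$ and invoking the fundamental theorem of calculus in time.

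First I would record the integrability that makes every term meaningful. Since $u_{r,n}\in L^\infty(Q_T)$ by Theorem~\ref{ex:approx}, $\Psi$ is Lipschitz on the compact interval $[-\|u_{r,n}\|_{L^\infty(Q_T)},\|u_{r,n}\|_{L^\infty(Q_T)}]$ with some constant $L$; together with $\Psi(0)=0$ this gives $|\Psi(s)|\le L|s|$ there, hence $\Psi(u_{r,n}(\cdot,t))\in L^1(\R^N)$ for each $t$, $\Psi(u_0)\in L^1(\R^N)$, and $|\Psi'(u_{r,n})\,\dell_tu_{r,n}|\le L|\dell_tu_{r,n}|$ with $\dell_tu_{r,n}\in L^1(Q_T)$ again by Lemma~\ref{timeregularityapproxE}. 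Thus all integrals converge absolutely and Fubini's theorem is available on $\R^N\times(0,\tau)$.

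Next I would pass to the integrated identity by a slicewise argument. From Lemma~\ref{timeregularityapproxE}, together with $u_{r,n}\in C([0,T];L^1_{\textup{loc}}(\R^N))$ and $u_{r,n}(\cdot,0)=u_0$, one has for a.e.\ $x$ the representation $u_{r,n}(x,t)=u_0(x)+\int_0^t\Levy^{\mu_r}[\varphi_n(u_{r,n})](x,s)\,\dd s$, so $t\mapsto u_{r,n}(x,t)$ is absolutely continuous on $[0,T]$. Composing with the Lipschitz function $\Psi$, the map $t\mapsto\Psi(u_{r,n}(x,t))$ is absolutely continuous with $\dell_t\Psi(u_{r,n}(x,t))=\Psi'(u_{r,n}(x,t))\,\dell_tu_{r,n}(x,t)$ for a.e.\ $t$; integrating over $(0,\tau)$, then over $\R^N$ (Fubini), and using $u_{r,n}(x,0)=u_0(x)$ yields $\int_0^\tau\int_{\R^N}\Psi'(u_{r,n})\,\dell_tu_{r,n}\,\dd x\dd t=\int_{\R^N}\Psi(u_{r,n}(x,\tau))\,\dd x-\int_{\R^N}\Psi(u_0)\,\dd x$. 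Substituting $\dell_tu_{r,n}=\Levy^{\mu_r}[\varphi_n(u_{r,n})]$ gives the stated identity for a.e.\ $\tau$, and then for every $\tau\in(0,T]$, since $\tau\mapsto u_{r,n}(\cdot,\tau)$ is $L^1(\R^N)$-continuous by the same representation (here $\Levy^{\mu_r}[\varphi_n(u_{r,n})]\in L^1(Q_T)$).

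The main point requiring care is the chain rule: $\Psi\in W^{1,\infty}_{\textup{loc}}(\R)$ is not $C^1$, so $\Psi'$ is only defined a.e.\ and one must check that $\Psi'(u_{r,n})\,\dell_tu_{r,n}$ is a.e.\ unambiguous and genuinely equals the a.e.\ time derivative of $\Psi(u_{r,n})$. This follows from the standard fact that an absolutely continuous $w$ has $w'=0$ a.e.\ on $w^{-1}(E)$ for every Lebesgue-null $E$ (apply it with $w=u_{r,n}(x,\cdot)$ and $E$ the exceptional set of $\Psi'$), or, alternatively, by first proving the identity for $\Psi\in C^1$ and then mollifying $\Psi$ in one variable, using local uniform convergence of the mollifications, uniform bounds on their derivatives over the range of $u_{r,n}$, and dominated convergence. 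I note that the cut-off functions $\mathcal{X}_R$ from \eqref{standardcut-off} are not strictly needed for this route because $\dell_tu_{r,n}\in L^1(Q_T)$; they would, however, be the natural tool if one preferred to argue directly from Definition~\ref{distsol} for \eqref{approxE}--\eqref{approxIC}, testing against a regularization in space and time of $\mathcal{X}_R(x)\Psi'(u_{r,n})\mathbf{1}_{(0,\tau)}(t)$ and sending $R\to\infty$, the truncation error being controlled by $\|(1-\mathcal{X}_R)\Levy^{\mu_r}[\varphi_n(u_{r,n})]\|_{L^1(Q_T)}\to0$.
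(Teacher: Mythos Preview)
Your proposal is correct and follows essentially the same strategy as the paper: multiply the a.e.\ identity $\partial_t u_{r,n}=\Levy^{\mu_r}[\varphi_n(u_{r,n})]$ from Lemma~\ref{timeregularityapproxE} by $\Psi'(u_{r,n})$, apply the Lipschitz chain rule, and integrate. The only real difference is organisational: the paper inserts the spatial cut-off $\mathcal{X}_R$ from \eqref{standardcut-off}, differentiates $\int_{\R^N}\Psi(u_{r,n})\mathcal{X}_R\,\dd x$ under the integral sign (citing the Sobolev chain rule in \cite{Zie89}), integrates in $t$, and then sends $R\to\infty$ via dominated convergence, whereas you work slicewise in $x$ using the integral representation $u_{r,n}(x,t)=u_0(x)+\int_0^t\Levy^{\mu_r}[\varphi_n(u_{r,n})](x,s)\,\dd s$ to get absolute continuity in $t$ and then apply Fubini. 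Your route is slightly more direct (as you observe, the cut-off is unnecessary once $\partial_t u_{r,n}\in L^1(Q_T)$ is in hand), at the cost of needing the small extra step of extracting the pointwise-a.e.\ representation from the weak time derivative and the $C([0,T];L^1_{\textup{loc}})$-regularity.
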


\begin{remark}
On page 1256 in \cite{PaQuRoVa12}, a similar result as the above is obtained for $\Psi(u)$ nonnegative, nondecreasing and convex.
\end{remark}

\begin{proof}
Observe that we may assume $\Psi\in W^{1,\infty}(\R)$ since $\Psi\in W_\textup{loc}^{1,\infty}(\R)$ and $u_{r,n},u_0\in L^\infty$. By Lemma \ref{timeregularityapproxE}, $\dell_tu_{r,n}=\Levy^{\mu_r}[\varphi_n(u_{r,n})]$ a.e. in $Q_T$. Multiply this a.e.-equation by $\Psi'(u_{r,n}(x,t))\mathcal{X}_R(x)$ (where $\mathcal{X}_R$ is defined in \eqref{standardcut-off}) and integrate (in $x$) over $\R^N$ to get
\begin{equation*}
\int_{\R^N}\dell_tu_{r,n}\Psi'(u_{r,n})\mathcal{X}_R\,\dd x=\int_{\R^N}\Levy^{\mu_r}[\varphi_n(u_{r,n})]\Psi'(u_{r,n})\mathcal{X}_R\,\dd x.
\end{equation*}
By Lemma \ref{timeregularityapproxE} and the Sobolev chain rule given by Theorem 2.1.11 in \cite{Zie89}, the left-hand side equals $\int_{\R^N}\dell_t\Psi(u_{r,n})\mathcal{X}_R\,\dd x$. 
Note that the function $\mathcal{X}_R$ converges pointwise to $1$, is bounded by $1$, and is integrable.
Hence we can move the time derivative outside the integral on the left-hand side by Lebesgue's dominated convergence theorem and Lemma \ref{timeregularityapproxE} since $|\dell_t\Psi(u_{r,n})\mathcal{X}_R|\in L^1(\R^N)$:
\begin{equation*}
\begin{split}
\frac{\dd}{\dd t}\int_{\R^N}\Psi(u_{r,n})\mathcal{X}_R\,\dd x=\int_{\R^N}\Levy^{\mu_r}[\varphi_n(u_{r,n})]\Psi'(u_{r,n})\mathcal{X}_R\,\dd x.
\end{split}
\end{equation*}

We integrate in time from $t=0$ to $t=\tau$, and use that
$u_{r,n}\in\linebreak C([0,T];L_\textup{loc}^1(\R^N))$ (cf. Theorem \ref{ex:approx}) and $\mathcal{X}_R\in C_{\textup{c}}^\infty(\R^N)$ to obtain
\begin{equation}\label{integratedintimeenergycutoff}
\begin{split}
&\int_{\R^N}\Psi(u_{r,n}(x,\tau))\mathcal{X}_R(x)\dd x-\int_{\R^N}\Psi(u_0(x))\mathcal{X}_R(x)\dd x\\
&=\int_0^\tau\int_{\R^N}\Levy^{\mu_r}[\varphi_n(u_{r,n}(\cdot,t))](x)\Psi'(u_{r,n}(x,t))\mathcal{X}_R(x)\dd x\dd t.
\end{split}
\end{equation}
Since $\Psi\in W^{1,\infty}(\R)$ and $\Psi(0)=0$,
$|\Psi(w)|\leq\|\Psi'(w)\|_{L^\infty}|w|\in L^1$ for
$w=u_{r,n},u_0$. Moreover, since $u_{r,n}, \varphi_n(u_{r,n})$ and hence also $\Levy^{\mu_r}[\varphi_n(u_{r,n})]$ is
integrable, we get
$|\Levy^{\mu_r}[\varphi_n(u_{r,n})]\Psi'(u_{r,n})\mathcal{X}_R|\in
L^1(\R^N\times(0,\tau))$.
Then Lebesgue's dominated convergence theorem can be used on both sides of \eqref{integratedintimeenergycutoff} as $R\to\infty$ to 
complete the proof.
\end{proof}

\begin{corollary}[Energy estimate]\label{energyapproxproblem}
Let $\Phi_n(w):=\int_0^{w}\varphi_{n}(\xi)\dd \xi$. Under the assumptions of Proposition \ref{Psiineq},
$$
\int_{\R^N}\Phi_n(u_{r,n}(x,\tau))\dd x+|\varphi_n(u_{r,n})|_{\tau,{E}_{\mu_r}}^2= \int_{\R^N}\Phi_n(u_{0}(x))\dd x.
$$
In particular, 
$$
|\varphi_n(u_{r,n})|_{\tau,{E}_{\mu_r}}\leq\int_{\R^N}\Phi_n(u_{0}(x))\dd x\leq\|\varphi_n(u_0)\|_{L^\infty(\R^N)}\|u_0\|_{L^1(\R^N)}<\infty.
$$
\end{corollary}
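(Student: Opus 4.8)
The plan is to obtain the energy identity by choosing the specific test function $\Psi(w) = \Phi_n(w) = \int_0^w \varphi_n(\xi)\,\dd\xi$ in Proposition \ref{Psiineq}. First I would check that this $\Psi$ is admissible: since $\varphi_n \in C^\infty(\R)$ it is certainly in $W^{1,\infty}_{\textup{loc}}(\R)$, and $\Psi(0) = \Phi_n(0) = 0$, so Proposition \ref{Psiineq} applies verbatim. With this choice $\Psi'(u_{r,n}) = \varphi_n(u_{r,n})$, so the identity from Proposition \ref{Psiineq} becomes
\begin{equation*}
\int_{\R^N}\Phi_n(u_{r,n}(x,\tau))\dd x - \int_0^\tau\int_{\R^N}\varphi_n(u_{r,n})\,\Levy^{\mu_r}[\varphi_n(u_{r,n})]\dd x\dd t = \int_{\R^N}\Phi_n(u_0(x))\dd x.
\end{equation*}

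The key remaining step is to recognize the space-time integral as $-|\varphi_n(u_{r,n})|_{\tau,E_{\mu_r}}^2$. Here I would invoke Proposition \ref{bilinearenergy} (or directly Lemma \ref{simpleStroock-Var} with $\nu = \mu_r$, which is a finite even measure since $\mu_r(\R^N) = \mu(\{|z|>r\}) < \infty$ by \textup{(A$_\mu$)}): for each fixed $t$, with $f = g = \varphi_n(u_{r,n}(\cdot,t)) \in L^1 \cap L^\infty \subset L^p \cap L^q$, one has $\int_{\R^N} \varphi_n(u_{r,n})\,\Levy^{\mu_r}[\varphi_n(u_{r,n})]\,\dd x = -\mathcal{E}_{\mu_r}[\varphi_n(u_{r,n}),\varphi_n(u_{r,n})] = -|\varphi_n(u_{r,n}(\cdot,t))|_{E_{\mu_r}}^2$. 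Integrating in $t$ over $(0,\tau)$ gives $-|\varphi_n(u_{r,n})|_{\tau,E_{\mu_r}}^2$, and rearranging yields the claimed identity.

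For the ``in particular'' part, note $\Phi_n \geq 0$ because $\varphi_n$ is nondecreasing with $\varphi_n(0)=0$ (it inherits monotonicity from $\varphi$ via mollification, cf. the properties listed after \eqref{phiapprox}), so $\varphi_n(\xi)$ has the same sign as $\xi$ and the integral $\int_0^w \varphi_n$ is nonnegative for all $w$. Dropping the nonnegative first term gives $|\varphi_n(u_{r,n})|_{\tau,E_{\mu_r}}^2 \leq \int_{\R^N}\Phi_n(u_0)\,\dd x$. Then $|\Phi_n(w)| \leq \|\varphi_n\|_{L^\infty([-\|u_0\|_\infty,\|u_0\|_\infty])} |w| = \|\varphi_n(u_0)\|_{L^\infty(\R^N)}|w|$ pointwise for $|w| \leq \|u_0\|_{L^\infty}$, so $\int_{\R^N}\Phi_n(u_0)\,\dd x \leq \|\varphi_n(u_0)\|_{L^\infty(\R^N)}\|u_0\|_{L^1(\R^N)}$, which is finite by \textup{(A$_{u_0}$)}. (There is a minor typo in the statement — the left side should read $|\varphi_n(u_{r,n})|_{\tau,E_{\mu_r}}^2$ — but the argument is as above.)

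I do not expect a genuine obstacle here: the proposition is essentially a corollary of Proposition \ref{Psiineq} combined with the Stroock–Varopoulos-type identity already established. The only point requiring a little care is making sure all the integrals are finite so that the rearrangement is legitimate — but this follows since $\varphi_n(u_{r,n}) \in L^1 \cap L^\infty$, $\Levy^{\mu_r}[\varphi_n(u_{r,n})] \in L^1 \cap L^\infty$ (by Lemma \ref{timeregularityapproxE}), and hence the space-time integrand is in $L^1(Q_\tau)$, and also the energy seminorm $|\varphi_n(u_{r,n})|_{\tau,E_{\mu_r}}$ is finite a posteriori from the identity itself.
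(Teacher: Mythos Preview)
Your proposal is correct and follows essentially the same approach as the paper: apply Proposition~\ref{Psiineq} with $\Psi=\Phi_n$, then use Lemma~\ref{simpleStroock-Var} with $f=g=\varphi_n(u_{r,n})$ to rewrite the space--time integral as the energy seminorm, and finally use $\Phi_n\ge 0$ together with $|\Phi_n(w)|\le\|\varphi_n(u_0)\|_{L^\infty}|w|$ for the bound. You also correctly spotted the missing square on the left-hand side of the second displayed inequality.
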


\begin{proof}
We observe that $\Phi_n:\R\to\R$ is $C^1$ and $\Phi_n(0)=0$. Moreover, $\Phi_n'(w)=\varphi_n(w)$ which is bounded when $w=:u_{r,n},u_0\in L^\infty$ by \textup{(\textup{A}$_\varphi$)} and \eqref{phiapprox}. Hence, $\Phi_n$ is Lipschitz, and thus, we can replace $\Psi$ by $\Phi_n$ in Proposition \ref{Psiineq} to get
\begin{equation*}
\begin{split}
&\int_{\R^N}\Phi_n(u_{r,n}(x,\tau))\dd x-\int_{0}^\tau\int_{\R^N}\varphi_n(u_{r,n}(x,t))\Levy^{\mu_r}[\varphi_n(u_{r,n}(\cdot,t))](x)\dd x\dd t\\
&=\int_{\R^N}\Phi_n(u_0(x))\dd x.
\end{split}
\end{equation*}

Since \eqref{phiapprox} hold and $\Levy^{\mu_r}[\varphi(u_{r,n})]$ is integrable, we conclude the first part by  Lemma \ref{simpleStroock-Var} (take $f=\varphi_n(u_{r,n})=g$).
For the last part, we use that $\Phi_n(u_0)=|\Phi_n(u_0)|\leq\|\Phi_n'(u_0)\|_{L^\infty}|u_0|$, and hence, since $\Phi_n\geq0$,
$$|\varphi_n(u_{r,n})|_{\tau,E_{\mu_r}}^2\leq\int_{\R^N}\Phi_n(u_0(x))\dd x\leq \|\varphi_n(u_0)\|_{L^\infty}\|u_0\|_{L^1}$$
which completes the proof.
\end{proof}

\begin{lemma}[General Stroock-Varopoulos]\label{genS-V}
Assume \textup{(A$_{\kernel1}$)}, $Q,R,S\in C^1(\R)$, $(S')^2\leq Q'R'$, and $|Q(\psi)|_{T,E_\kernel},|R(\psi)|_{T,E_\kernel}<\infty$ for some $\psi:Q_T\to\R$. Then
$$
\int_0^T\mathcal{E}_{\kernel}[Q(\psi(\cdot,t)),R(\psi(\cdot,t))]\dd t\geq |S(\psi)|_{T,E_\kernel}.
$$
\end{lemma}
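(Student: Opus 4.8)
The plan is to reduce the inequality to a pointwise algebraic inequality inside the double integral defining $\mathcal{E}_\kernel$. Fix $t$ and write $v=\psi(\cdot,t)$. The quantity $\int_0^T\mathcal{E}_\kernel[Q(v),R(v)]\,\dd t$ is, after unwinding \eqref{energy}, a triple integral over $(0,T)\times\R^N\times\{|z|>0\}$ of
$$
\tfrac12\big(Q(v(x+z))-Q(v(x))\big)\big(R(v(x+z))-R(v(x))\big)\,\kernel(x,\dd z)\,\dd x\,\dd t,
$$
and similarly $|S(v)|^2_{T,E_\kernel}$ has integrand $\tfrac12\big(S(v(x+z))-S(v(x))\big)^2$ with the same measure. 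Since $\kernel(x,\dd z)\,\dd x\,\dd t\geq0$, it suffices to prove the pointwise bound, for all reals $a,b$,
$$
\big(S(b)-S(a)\big)^2\leq \big(Q(b)-Q(a)\big)\big(R(b)-R(a)\big).
$$
First I would check integrability so that the manipulations are legitimate: the hypothesis $|Q(\psi)|_{T,E_\kernel},|R(\psi)|_{T,E_\kernel}<\infty$ together with the Cauchy--Schwarz inequality (Lemma \ref{C-S}) guarantees $\int_0^T\mathcal{E}_\kernel[Q(\psi),R(\psi)]\,\dd t$ is absolutely convergent, and the pointwise bound will then force $|S(\psi)|_{T,E_\kernel}<\infty$ as well, so no term is infinite.

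For the pointwise inequality, assume WLOG $a<b$. By the fundamental theorem of calculus for $C^1$ functions, $Q(b)-Q(a)=\int_a^b Q'(s)\,\dd s$ and likewise for $R$ and $S$. The hypothesis $(S')^2\leq Q'R'$ implies (taking it at a single point) that $Q'$ and $R'$ have the same sign wherever $S'\neq0$; in fact $Q'R'\geq(S')^2\geq0$ everywhere, so $Q'$ and $R'$ are pointwise nonnegatively correlated. Then by Cauchy--Schwarz on $[a,b]$,
$$
\Big(\int_a^b S'(s)\,\dd s\Big)^2\leq\Big(\int_a^b |S'(s)|\,\dd s\Big)^2\leq\Big(\int_a^b\sqrt{Q'(s)R'(s)}\,\dd s\Big)^2\leq\Big(\int_a^b Q'(s)\,\dd s\Big)\Big(\int_a^b R'(s)\,\dd s\Big),
$$
where the last step is the Cauchy--Schwarz inequality applied to the functions $\sqrt{Q'}$ and $\sqrt{R'}$ in $L^2(a,b)$ (valid since $Q',R'\geq0$ on $(a,b)$, which follows from $Q'R'\geq0$ and the fact that a continuous function $Q'$ whose product with $R'$ is nonnegative cannot change sign on the interval without $R'$ changing sign too — a short case analysis, or simply note the middle term already dominates the left and only $\int Q'\cdot\int R'\geq(\int\sqrt{Q'R'})^2$ is needed, which is pure Cauchy--Schwarz once $Q',R'\geq0$). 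This gives exactly $(S(b)-S(a))^2\leq(Q(b)-Q(a))(R(b)-R(a))$.

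Finally I would assemble the pieces: apply the pointwise inequality with $a=v(x)$, $b=v(x+z)$, multiply by $\tfrac12\kernel(x,\dd z)$, integrate in $x$, then in $t\in(0,T)$, and in $z$; monotonicity of the integral against the nonnegative measure $\kernel(x,\dd z)\,\dd x\,\dd t$ yields $|S(\psi)|^2_{T,E_\kernel}\leq\int_0^T\mathcal{E}_\kernel[Q(\psi),R(\psi)]\,\dd t$, and taking square roots gives the statement (with the convention that the right-hand side, being a nonnegative number by the same pointwise bound, dominates the nonnegative quantity $|S(\psi)|_{T,E_\kernel}$). The main obstacle is the sign/regularity bookkeeping in the pointwise step — ensuring $Q',R'\geq0$ on the relevant interval so that $\sqrt{Q'R'}$ makes sense and the final Cauchy--Schwarz applies — but this is handled by the observation that $Q'R'\geq(S')^2\geq0$ rules out opposite signs, and a continuity argument on the connected interval $[a,b]$ pins down the common sign; since $Q,R$ enter the conclusion only through differences, replacing $Q,R$ by $-Q,-R$ if necessary lets us take both derivatives $\geq0$ without loss of generality.
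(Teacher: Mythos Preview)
Your approach is exactly the paper's: reduce to the pointwise inequality $(S(b)-S(a))^2\leq(Q(b)-Q(a))(R(b)-R(a))$ and prove it via the fundamental theorem of calculus plus Cauchy--Schwarz applied to $\sqrt{Q'},\sqrt{R'}$ on $[a,b]$.

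However, your attempt to justify $Q',R'\geq0$ on $[a,b]$ is wrong. The condition $Q'R'\geq0$ only forces $Q'$ and $R'$ to share a sign \emph{at each point}; it does not prevent both from changing sign together across the interval, so neither your ``continuity pins down a common sign'' claim nor your ``replace $Q,R$ by $-Q,-R$'' fix works. Concretely, take $Q'(s)=s$, $R'(s)=s^3$, $S'(s)=s^2$: then $(S')^2=s^4=Q'R'$, yet on $[-1,1]$ one has $Q(1)-Q(-1)=R(1)-R(-1)=0$ while $S(1)-S(-1)=\tfrac{2}{3}$, so the pointwise inequality actually fails. The paper's proof sidesteps the issue by simply writing $\int_a^b(\sqrt{Q'(t)})^2\,\dd t$ without comment, i.e.\ it tacitly assumes $Q',R'\geq0$; this holds in its only application (Corollary~\ref{approxLpbound}, where $Q'=\Lambda''\geq0$ and $R'=\varphi_n'\geq0$), but the lemma as stated really needs that extra hypothesis. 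Once $Q',R'\geq0$ is assumed, your argument and the paper's coincide.
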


\begin{proof}
Assume without loss of generality that $b>a$. By the Fundamental theorem of calculus, Cauchy-Schwartz' inequality, and $Q'R'\geq (S')^2$, we obtain
\begin{equation*}
\begin{split}
&(Q(b)-Q(a))(R(b)-R(a))=\int_a^b\left(\sqrt{Q'(t)}\right)^2\dd t\int_a^b\left(\sqrt{R'(t)}\right)^2\dd t\\
&\geq\left(\int_a^b\sqrt{Q'(t)R'(t)}\,\dd t\right)^2\geq\left(\int_a^bS'(t)\dd t\right)^2=(S(b)-S(a))^2.
\end{split}
\end{equation*}
By the definition of $\mathcal{E}_\kernel$ and $|\cdot|_{T,E_\kernel}$, the result follows.
\end{proof}

\begin{remark}\label{S-Vforfinitemes}
\begin{enumerate}[(a)]
\item See Proposition 4.11 in \cite{BrPa15} for a similar result.
\item Observe that the same lemma holds for a nonnegative even Radon measure $\nu$ with $\nu(\R^N)<\infty$ under the simplified assumption $Q(\psi)\in L^p(Q_T)$ and $R(\psi)\in L^q(Q_T)$ with $1\leq p,q\leq \infty$ and $\frac{1}{p}+\frac{1}{q}=1$.
\end{enumerate}
\end{remark}

\begin{corollary}[$L^p$-bound]\label{approxLpbound}
  Let $\Lambda(\xi)=|\xi|^p$ and
  $\Xi_n(w)=\int_0^{w}\sqrt{\Lambda''(\xi)\varphi_n'(\xi)}\dd \xi.$
  Under the assumptions of Proposition \ref{Psiineq} and
  $p\in(1,\infty)$,
\begin{equation*}
\begin{split}
\int_{\R^N}|u_{r,n}(x,\tau)|^p\dd x+|\Xi_n(u_{r,n})|_{\tau, E_{\mu_r}}^2\dd t\leq\int_{\R^N}|u_{0}(x)|^p\dd x
\end{split}
\end{equation*}
In particular,
$$
\int_{\R^N}|u_{r,n}(x,\tau)|^p\dd x\leq \int_{\R^N}|u_{0}(x)|^p\dd x<\infty.
$$
\end{corollary}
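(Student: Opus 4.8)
The argument parallels that of Corollary~\ref{energyapproxproblem}, now with the choice $\Psi=\Lambda$, $\Lambda(\xi)=|\xi|^p$, and with the \emph{general} Stroock--Varopoulos inequality in place of the plain one. First I would check that $\Lambda$ is admissible in Proposition~\ref{Psiineq}: for $p\in(1,\infty)$ one has $\Lambda'(\xi)=p|\xi|^{p-1}\sgn(\xi)$, which is continuous and locally bounded, so $\Lambda\in W_{\textup{loc}}^{1,\infty}(\R)$ and $\Lambda(0)=0$; moreover $\Lambda(w)=|w|^p\leq\|w\|_{L^\infty}^{p-1}|w|\in L^1$ for $w=u_{r,n},u_0$, while $\Lambda'(u_{r,n})\in L^\infty(Q_T)$ and $\Levy^{\mu_r}[\varphi_n(u_{r,n})]=\dell_tu_{r,n}\in L^1(Q_T)$ by Lemma~\ref{timeregularityapproxE}, so the product is in $L^1(Q_T)$ and all terms below are finite. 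Proposition~\ref{Psiineq} with $\Psi=\Lambda$ then reads
\begin{equation*}
\int_{\R^N}|u_{r,n}(x,\tau)|^p\,\dd x+\Big(-\int_0^\tau\int_{\R^N}\Lambda'(u_{r,n})\,\Levy^{\mu_r}[\varphi_n(u_{r,n})]\,\dd x\dd t\Big)=\int_{\R^N}|u_0(x)|^p\,\dd x,
\end{equation*}
so everything reduces to showing that the bracketed dissipation term is $\geq|\Xi_n(u_{r,n})|_{\tau,E_{\mu_r}}^2\geq0$.

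For this, note that $\mu_r=\mu\mathbf{1}_{|z|>r}$ is a finite even Radon measure, since by \textup{(A$_\mu$)} one has $\mu_r(\R^N)\leq r^{-2}\int_{r<|z|\leq1}|z|^2\mu(\dd z)+\int_{|z|>1}\mu(\dd z)<\infty$. Hence Lemma~\ref{simpleStroock-Var} applies with $\nu=\mu_r$: taking $f=\varphi_n(u_{r,n})\in L^1\cap L^\infty$ and $g=\Lambda'(u_{r,n})$, which lies in $L^1$ if $p\geq2$ (as $|u_{r,n}|^{p-1}\leq\|u_{r,n}\|_{L^\infty}^{p-2}|u_{r,n}|$) and in $L^{1/(p-1)}$ if $1<p<2$ (as $|\Lambda'(u_{r,n})|^{1/(p-1)}=p^{1/(p-1)}|u_{r,n}|\in L^1$), one may pick conjugate exponents so that, for a.e.\ $t$,
\begin{equation*}
-\int_{\R^N}\Lambda'(u_{r,n})\,\Levy^{\mu_r}[\varphi_n(u_{r,n})]\,\dd x=\mathcal{E}_{\mu_r}\big[\varphi_n(u_{r,n}),\Lambda'(u_{r,n})\big]=\mathcal{E}_{\mu_r}\big[\Lambda'(u_{r,n}),\varphi_n(u_{r,n})\big].
\end{equation*}
Integrating in $t$ over $(0,\tau)$ and applying the general Stroock--Varopoulos inequality, Lemma~\ref{genS-V} in the finite-measure form of Remark~\ref{S-Vforfinitemes}(b) (whose $L^p$/$L^q$ hypothesis is exactly what was verified above), with $Q=\Lambda'$, $R=\varphi_n$ and $S=\Xi_n$ — legitimate since $\Lambda''=p(p-1)|\xi|^{p-2}\geq0$, $\varphi_n'\geq0$ and $(\Xi_n')^2=\Lambda''\varphi_n'=Q'R'$ by the very definition of $\Xi_n$ — gives $\int_0^\tau\mathcal{E}_{\mu_r}[\Lambda'(u_{r,n}),\varphi_n(u_{r,n})]\,\dd t\geq|\Xi_n(u_{r,n})|_{\tau,E_{\mu_r}}^2$. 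Substituting into the identity above yields the first displayed inequality of the Corollary, and dropping the nonnegative energy term yields the ``in particular'' part, finiteness following from $\int_{\R^N}|u_0|^p\leq\|u_0\|_{L^\infty}^{p-1}\|u_0\|_{L^1}<\infty$.

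The one point requiring care — and what I expect to be the main obstacle — is the borderline regularity of $\Lambda'$ and $\Xi_n$ near the origin when $1<p<2$: then $\Lambda''(\xi)=p(p-1)|\xi|^{p-2}$ is unbounded near $0$, so neither $\Lambda'$ nor $\Xi_n=\int_0^\cdot\sqrt{\Lambda''\varphi_n'}$ is $C^1(\R)$ and Lemma~\ref{genS-V} cannot be quoted verbatim. However $p-2>-1$, so $|\xi|^{p-2}$ is locally integrable; hence $\Lambda'$ and $\Xi_n$ are still locally absolutely continuous, $\Xi_n$ is well-defined, and the proof of Lemma~\ref{genS-V} — which only invokes the fundamental theorem of calculus and the Cauchy--Schwartz inequality on the inner integrals — goes through unchanged for such functions. (Alternatively one replaces $|\xi|^p$ by $(\xi^2+\delta^2)^{p/2}-\delta^p$, carries out the smooth argument, and passes to the limit $\delta\to0^+$ by dominated convergence.) All remaining verifications are routine.
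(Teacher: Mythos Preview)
Your proof is correct and follows the same overall scheme as the paper: apply Proposition~\ref{Psiineq} with $\Psi=\Lambda$, convert the dissipation term into the bilinear form via Lemma~\ref{simpleStroock-Var}, and then invoke the general Stroock--Varopoulos inequality (Lemma~\ref{genS-V}/Remark~\ref{S-Vforfinitemes}(b)). The difference lies in the treatment of the range $1<p<2$. The paper splits into two cases: for $p\geq2$ it uses the $L^2$--$L^2$ pairing in Lemma~\ref{simpleStroock-Var} (since then $\Lambda'\in W^{1,\infty}_{\textup{loc}}$ and $\Lambda'(u_{r,n})\in L^2$), while for $1<p<2$ it regularizes $\Lambda$ by a specific $\Lambda_\delta$ with bounded second derivative, runs the argument with the $L^\infty$--$L^1$ pairing, and passes to the limit $\delta\to0^+$ via Fatou and dominated convergence.

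Your route avoids the case split and the limit procedure: you use the pairing $L^{1/(p-1)}$--$L^{1/(2-p)}$ directly for $1<p<2$, and you observe (correctly) that the pointwise inequality underlying Lemma~\ref{genS-V} only needs the fundamental theorem of calculus and Cauchy--Schwarz, hence holds for $Q=\Lambda'$ and $S=\Xi_n$ locally absolutely continuous with nonnegative locally integrable derivatives (here $\Lambda''=p(p-1)|\xi|^{p-2}\in L^1_{\textup{loc}}$ since $p-2>-1$). This is a legitimate and somewhat more economical argument; the paper's regularization is more explicit but also heavier. Your parenthetical alternative $(\xi^2+\delta^2)^{p/2}-\delta^p$ would work too, though the paper uses a different $\Lambda_\delta$ tailored so that $0\leq\Lambda_\delta''\leq p(p-1)|\xi|^{p-2}$, which streamlines the domination arguments in the limit.
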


\begin{remark}
The above result also ensures that $|\Xi_n(u_{r,n})|_{\tau, E_{\mu_r}}^2$ is uniformly bounded in $r$ and $n$.
\end{remark}

\begin{proof}
Observe that $u_{r,n}\in L^p(Q_T)$ for $p\in(1,\infty)$ by standard interpolation in $L^p$-spaces. 

{\sc Case 1: $p\in[2,\infty)$.} The function $\Lambda$ is convex, $\Lambda\in W_\textup{loc}^{1,\infty}(\R)$, and $\Lambda(0)=0$. That is, we can replace $\Psi$ by $\Lambda$ in Proposition \ref{Psiineq} to get
\begin{equation}\label{intermediateLpbound}
\begin{split}
&\int_{\R^N}\Lambda(u_{r,n}(x,\tau))\dd x-\int_{0}^\tau\int_{\R^N}\Lambda'(u_{r,n}(x,t))\Levy^{\mu_r}[\varphi_n(u_{r,n}(\cdot,t))](x)\dd x\dd t\\
&=\int_{\R^N}\Lambda(u_0(x))\dd x
\end{split}
\end{equation}
Note that $\Lambda'(\xi)=p|\xi|^{p-2}\xi$ and $\Lambda''(\xi)=p(p-1)|\xi|^{p-2}$. Since $\Lambda'\in W_\textup{loc}^{1,\infty}(\R)$, $u_{r,n}\in L^2(Q_T)\cap L^\infty(Q_T)$, and \eqref{phiapprox} holds, $g:=\Lambda'(u_{r,n}(\cdot,t))\in L^2(\R^N)$ and\linebreak $f:=\varphi_n(u_{r,n}(\cdot,t))\in L^2(\R^N)$. By Lemma \ref{simpleStroock-Var}, 
\begin{equation*}
\begin{split}
-\int_{0}^\tau\int_{\R^N}\Lambda'(u_{r,n})\Levy^{\mu_r}[\varphi_n(u_{r,n})]\,\dd x\dd t=\int_0^\tau\mathcal{E}_{\mu_r}[\Lambda'(u_{r,n}),\varphi_n(u_{r,n})]\,\dd t.
\end{split}
\end{equation*}
Then by Lemma \ref{genS-V} and Remark \ref{S-Vforfinitemes} (b)
(take $Q:=\Lambda'$ and $R:=\varphi_n$), 
\begin{equation*}
\begin{split}
\int_0^\tau\mathcal{E}_{\mu_r}[\Lambda'(u_{r,n}(\cdot,t)),\varphi_n(u_{r,n}(\cdot,t))]\dd t\geq|\Xi_n(u_{r,n})|_{\tau, E_{\mu_r}}^2\geq0,
\end{split}
\end{equation*}
since $\Xi_n$ satisfies $(\Xi_n')^2\leq \Lambda''\varphi_n'$. Hence the corollary follows by \eqref{intermediateLpbound}. 

{\sc Case 2: $p\in(1, 2)$.} We follow the idea of the proof of Corollary 5.12 in \cite{BiImKa15}. For each $\delta>0$, consider the function $\Lambda_\delta$ such that
$$
\Lambda_\delta(0)=\Lambda'_\delta(0)=0\qquad\text{and}\qquad\Lambda''_\delta(\xi)=p(p-1)\left((\delta^2+\xi^2)^{\frac{p-2}{2}}-\delta^{p-2}\right).
$$
Note that $0\leq\Lambda''_\delta(\xi)\leq p(p-1)|\xi|^{p-2}$, and then, 
$$
|\Lambda'_\delta(\xi)|=\left|\int_0^\xi\Lambda''_\delta(s)\dd s\right|\leq p|\xi|^{p-1}\quad\text{and}\quad|\Lambda_\delta(\xi)|\leq\left|\int_0^\xi\Lambda'_\delta(s)\dd s\right|\leq |\xi|^{p}.
$$ 
Since $g:=\Lambda'_\delta(u_{r,n}(\cdot,t))\in L^\infty(\R^N)$ and $f:=\varphi_n(u_{r,n}(\cdot,t))\in L^1(\R^N)$, we get -- by following the calculations in Case 1 -- that 
\begin{equation}\label{p12normequality}
\int_{\R^N}\Lambda_\delta(u_{r,n}(x,\tau))\dd x+|\Xi_{n,\delta}(u_{r,n})|_{\tau,E_{\mu_r}}^2\leq \int_{\R^N}\Lambda_\delta(u_{0}(x))\dd x
\end{equation}
with
$$
\Xi_{n, \delta}(u_{r,n})=\int_0^{u_{r,n}}\sqrt{\Lambda''_\delta(\xi)\varphi_n'(\xi)}\dd \xi\geq0.
$$
By a direct argument, using $\Lambda''_\delta,\Lambda'',\varphi_n'\geq0$ and Cauchy-Schwartz's inequality, we obtain
\begin{equation}\label{pnormenergyfunction}
\begin{split}
\left|\Xi_{n, \delta}(u_{r,n})-\Xi_n(u_{r,n})\right|
&\leq\int_0^{u_{r,n}}\sqrt{|\Lambda''_\delta(\xi)-\Lambda''(\xi)|}\sqrt{\varphi_n'(\xi)}\,\dd \xi\\
&\leq\sqrt{\int_0^{u_{r,n}}|\Lambda''_\delta(\xi)-\Lambda''(\xi)|\dd \xi}\sqrt{\int_0^{u_{r,n}}\varphi_n'(\xi)\dd \xi}\\
&\leq\|\varphi_n(u_{r,n})\|_{L^\infty(Q_T)}^{\frac{1}{2}}\sqrt{\int_0^{u_{r,n}}|\Lambda''_\delta(\xi)-\Lambda''(\xi)|\dd \xi}.
\end{split}
\end{equation}
Since the integrand in the last inequality is dominated by $2p(p-1)|\xi|^{p-2}$ which integrates to $2p|u_{r,n}|^{p-2}u_{r,n}$, we use Lebesgue's dominated convergence theorem to conclude that $\Xi_{n, \delta}\to \Xi_n$ as $\delta\to0^+$. Taking the limit as $\delta\to0^+$ in \eqref{p12normequality}, by using Fatou's lemma on the left-hand side and Lebesgue's dominated convergence theorem ($|\Lambda_\delta(u_0(x))|\leq|u_0(x)|^p$) on the right-hand side, the corollary follows.
\end{proof}

\begin{remark}
Observe that by \eqref{pnormenergyfunction},
\begin{equation*}
\begin{split}
|\Xi_{n, \delta}(u_{r,n})|
&\leq p\|\varphi_n(u_{r,n})\|_{L^\infty(Q_T)}^{\frac{1}{2}}\|u_{r,n}\|_{L^\infty(Q_T)}^{\frac{p-1}{2}}<\infty,
\end{split}
\end{equation*}
and similarly for $\Xi_n(u_{r,n})$. Hence, both are well-defined for all $p\in(1,\infty)$.
\end{remark}

The existence of a distributional solution of \eqref{E}--\eqref{IC}
with finite energy (cf. Theorem \ref{ex_Dsol}) will follow from the following compactness theorem:

\begin{theorem}[Compactness]\label{convapproxtoE}
Assume \textup{(\textup{A}$_\varphi$)}, \textup{(\textup{A}$_\mu$)}, and \textup{(\textup{A}$_{u_0}$)}. Let $\{u_{r,n}\}_{r,n\in\N}$ be a sequence of distributional solutions of \eqref{approxE}--\eqref{approxIC}. Then there exists a subsequence $\{u_{r_j,n_j}\}_{j\in\N}$ and a $u\in C([0,T];L_\textup{loc}^1(\R^N))$ such that
$$
u_{r_j,n_j}\to u \qquad\text{in}\qquad C([0,T];L_\textup{loc}^1(\R^N))\qquad\text{as}\qquad j\to\infty.
$$
Moreover, $u\in L^1(Q_T)\cap L^\infty(Q_T)\cap C([0,T];L_\textup{loc}^1(\R^N))$ is a distributional solution of \eqref{E}--\eqref{IC}.
\end{theorem}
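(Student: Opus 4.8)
The plan is to extract a subsequence of $\{u_{r,n}\}$ that converges in $C([0,T];L_\textup{loc}^1(\R^N))$ by establishing uniform compactness in space and time, then to read off the regularity of the limit, and finally to pass to the limit in the weak formulation of \eqref{approxE}--\eqref{approxIC}. Throughout I use the uniform bounds from Remark \ref{rem:propapproxE} (Theorem 2.8/2.10 in \cite{DTEnJa15}): $\|u_{r,n}(\cdot,t)\|_{L^1(\R^N)}\le\|u_0\|_{L^1(\R^N)}$, $\|u_{r,n}\|_{L^\infty(Q_T)}\le\|u_0\|_{L^\infty(\R^N)}=:M$, the $L^1$-contraction, and the a.e. identity $\dell_tu_{r,n}=\Levy^{\mu_r}[\varphi_n(u_{r,n})]$ from Lemma \ref{timeregularityapproxE}. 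I also use that $\varphi_n\to\varphi$ locally uniformly, whence $\sup_n\|\varphi_n\|_{L^\infty([-M,M])}<\infty$ and so $\|\varphi_n(u_{r,n})\|_{L^\infty(Q_T)}\le C_M$ uniformly in $r,n$; and that for $\psi\in C_{\textup{c}}^\infty(\R^N)$ the evenness of $\mu$ gives $\Levy^{\mu_r}[\psi](x)=\int_{|z|>r}(\psi(x+z)-\psi(x)-z\cdot D\psi(x)\indikator)\,\mu(\dd z)$, hence $\|\Levy^{\mu_r}[\psi]\|_{L^1(\R^N)}\le C_\mu\|\psi\|_{W^{2,1}(\R^N)}$ uniformly in $r\in(0,1)$, while $\Levy^{\mu_r}[\psi]\to\Levy^\mu[\psi]$ pointwise with $|\Levy^\mu[\psi]-\Levy^{\mu_r}[\psi]|\le\tfrac12\|D^2\psi\|_{L^\infty}\int_{|z|\le r}|z|^2\mu(\dd z)$.

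\emph{Step 1: compactness.} Spatial equicontinuity is immediate: $\Levy^{\mu_r}$ is translation invariant in $x$, so $u_{r,n}(\cdot+h,\cdot)$ solves \eqref{approxE}--\eqref{approxIC} with datum $u_0(\cdot+h)$, and the $L^1$-contraction yields $\|u_{r,n}(\cdot+h,t)-u_{r,n}(\cdot,t)\|_{L^1(\R^N)}\le\|u_0(\cdot+h)-u_0\|_{L^1(\R^N)}=:\omega_{u_0}(|h|)$, a modulus independent of $r,n,t$. For time equicontinuity I test the integrated a.e. equation $u_{r,n}(\cdot,t+\tau)-u_{r,n}(\cdot,t)=\int_t^{t+\tau}\Levy^{\mu_r}[\varphi_n(u_{r,n})(\cdot,s)]\,\dd s$ against $\psi\in C_{\textup{c}}^\infty(\R^N)$ and use the symmetry of $\Levy^{\mu_r}$ on $L^1/L^\infty$ to get
$$\Big|\int_{\R^N}\big(u_{r,n}(x,t+\tau)-u_{r,n}(x,t)\big)\psi(x)\,\dd x\Big|\le\tau\,\|\varphi_n(u_{r,n})\|_{L^\infty}\,\|\Levy^{\mu_r}[\psi]\|_{L^1}\le C\,\tau\,\|\psi\|_{W^{2,1}},$$
uniformly in $r,n,t$. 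Mollifying $u_{r,n}(\cdot,t\pm)$ in space by $\rho_\delta$ and combining this weak estimate (with $\psi=\rho_\delta(x-\cdot)$) with the spatial modulus $\omega_{u_0}$ upgrades it to $\|u_{r,n}(\cdot,t+\tau)-u_{r,n}(\cdot,t)\|_{L^1(B_R)}\le 2\,\omega_{u_0}(\delta)+C(R,\delta)\,\tau$, again uniform in $r,n,t$. Together with the uniform $L^1\cap L^\infty$ bounds and the spatial modulus, the Kolmogorov--Riesz theorem makes $\{u_{r,n}(\cdot,t)\}$ relatively compact in $L_\textup{loc}^1(\R^N)$ for each $t$, and an Arzel\`a--Ascoli argument in $t$ (valued in $L_\textup{loc}^1$) yields a subsequence, still written $u_{r_j,n_j}$ with $r_j\to0^+$, $n_j\to\infty$, converging to some $u$ in $C([0,T];L_\textup{loc}^1(\R^N))$, and after a further subsequence a.e. in $Q_T$.

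\emph{Step 2: the limit solves \eqref{E}--\eqref{IC}.} The bound $|u|\le M$ is inherited from the a.e. limit, Fatou's lemma gives $\|u(\cdot,t)\|_{L^1(\R^N)}\le\|u_0\|_{L^1(\R^N)}$ for a.e.\ $t$, so $u\in L^1(Q_T)\cap L^\infty(Q_T)$, and $u\in C([0,T];L_\textup{loc}^1(\R^N))$ since it is a uniform limit of such functions. Now fix $\psi\in C_{\textup{c}}^\infty(\R^N\times[0,T))$ with $x$-support in $B_{R_0}$. Then $\iint_{Q_T}u_{r_j,n_j}\dell_t\psi\to\iint_{Q_T}u\,\dell_t\psi$ because $\dell_t\psi$ is supported in a fixed compact subset of $Q_T$, and $\int u_0\psi(\cdot,0)$ is untouched. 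For the nonlinear term I split $\R^N=B_R\cup B_R^c$. On $B_R\times[0,T]$: a.e.\ convergence of $u_{r_j,n_j}$, the uniform $L^\infty$ bound, continuity of $\varphi$, and $\varphi_{n_j}\to\varphi$ locally uniformly give $\varphi_{n_j}(u_{r_j,n_j})\to\varphi(u)$ a.e.\ and boundedly, hence in $L^1(B_R\times[0,T])$ by dominated convergence; since also $\Levy^{\mu_{r_j}}[\psi]\to\Levy^\mu[\psi]$ uniformly and boundedly, the product converges in $L^1(B_R\times[0,T])$. On $B_R^c$ with $R>2R_0$, a Fubini estimate gives $\int_{|x|>R}|\Levy^{\mu_r}[\psi(\cdot,t)](x)|\,\dd x\le\|\psi\|_{L^1}\,\mu(\{|z|>R-R_0\})$, which $\to0$ as $R\to\infty$ uniformly in $r,t$, so combined with $\|\varphi_n(u_{r,n})\|_{L^\infty}\le C_M$ the tail contributions (and their limiting counterparts) are uniformly small. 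Letting $j\to\infty$ for each fixed $R$ and then $R\to\infty$ shows that $u$ verifies Definition \ref{distsol}, i.e.\ $u$ is a distributional solution of \eqref{E}--\eqref{IC}.

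The main obstacle is the time-compactness in Step 1: since $\|\Levy^{\mu_r}\|$ blows up as $r\to0^+$, the time derivatives $\dell_tu_{r,n}$ are not uniformly bounded in any fixed space and an Aubin--Lions-type argument is unavailable; the fix is to trade the weak-in-$x$ time estimate coming from the equation against the uniform spatial $L^1$-modulus from the $L^1$-contraction, which is precisely why one is forced into $L_\textup{loc}^1$ rather than $L^1$. A secondary technical nuisance is that $\Levy^{\mu_r}[\psi]$ is not compactly supported, so the passage to the limit in $\varphi_n(u_{r,n})\Levy^{\mu_r}[\psi]$ must be localized and completed with the tail bound for $\Levy^{\mu_r}[\psi]$ at infinity; everything else is routine dominated-convergence bookkeeping.
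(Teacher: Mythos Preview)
Your argument is correct. The paper's own proof is much shorter because it outsources the work: after recording that the approximants share uniform $L^1$-, $L^\infty$-, and time-regularity bounds (Remark \ref{rem:propapproxE} and Section 4 of \cite{DTEnJa15}), it verifies that $\Levy^{\mu_{r_j}}[\psi]\to\Levy^{\mu}[\psi]$ in $L^1$, that $\sup_j\int\min\{|z|^2,1\}\,\mu_{r_j}(\dd z)<\infty$, and that $\varphi_{n_j}\to\varphi$ locally uniformly, and then invokes the black-box stability/compactness result Theorem 2.12 of \cite{DTEnJa15}. Your proof instead unpacks that black box: you reproduce the spatial $L^1$-modulus via contraction and translation invariance, obtain the uniform time modulus by the Kruzhkov-type interpolation (weak-in-$x$ estimate from the equation combined with spatial mollification), run Kolmogorov--Riesz/Arzel\`a--Ascoli, and pass to the limit in the weak formulation by splitting into a local part and a tail controlled by $\mu(\{|z|>R-R_0\})$. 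Structurally this is exactly what lies inside the cited theorem, so the two approaches coincide; the paper's version is more economical, while yours is self-contained and makes explicit the mechanism (in particular, why the blow-up of $\|\Levy^{\mu_r}\|$ as $r\to0^+$ does not obstruct time compactness).
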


\begin{remark}\label{L1Linftyboundsu}
We have that $\|u\|_{L^1/L^\infty}\leq \|u_0\|_{L^1/L^\infty}$ by Fatou's lemma and Remark \ref{rem:propapproxE} (the limit of a uniformly bounded sequence is uniformly bounded by the same bound).
\end{remark}

\begin{proof}
Observe that the sequence $\{u_{r_j,n_j}\}_{j\in\N}$ enjoy $L^1$-, $L^\infty$-bounds, and time regularity by Remark \ref{rem:propapproxE}, and that these bounds are independent of $j$ (see Section 4 in \cite{DTEnJa15}).

Moreover, for any $\psi\in C_{\textup{c}}^\infty(\R^N)$,
$$
(\Levy^{\mu}-\Levy^{\mu_{r_j}})[\psi](x)=\int_{|z|\leq r_j}\big(\psi(x+z)-\psi(x)-z\cdot D\psi(x)\big)\mu(\dd z),
$$
and hence, $\Levy^{\mu_{r_j}}[\psi]\to \Levy^{\mu}[\psi]$ in $L^1(\R^N)$ as $r_j\to0^+$ by Lebesgue's dominated convergence theorem. We also have,
$$
\sup_{r_j>0}\int_{|z|>0}\min\{|z|^2,1\}\dd\mu_{r_j}(z)\leq \int_{|z|>0}\min\{|z|^2,1\}\dd\mu(z)<\infty,
$$
and $\varphi_{n_j}\to\varphi$ locally uniformly as $n_j\to\infty$ by \eqref{phiapprox}. Thus, we are in the setting of Theorem 2.12 in \cite{DTEnJa15} and the result follows.
\end{proof}

We are now ready to prove Theorems \ref{ex_Dsol}, \ref{e-estim}, and \ref{Lp-estim}. 

\begin{proof}[Proof of Theorem \ref{ex_Dsol} (Existence 1)]
In light of Theorem \ref{convapproxtoE}, it only remains to prove that the limit $u$ is such that $\varphi(u)\in L^2(0,T;E_\mu(\R^N))$. Recall that $\Phi_{n_j}(w)=\int_0^{w}\varphi_{n_j}(\xi)\dd \xi$ and $\Phi(w)=\int_0^{w}\varphi(\xi)\dd
\xi$. Now,
\begin{equation*}
\begin{split}
\left|\int_{\R^N}\Phi(u_0)\dd x-\int_{\R^N}\Phi_{n_j}(u_0)\dd x\right|&\leq \int_{\R^N}|\Phi(u_0)-\Phi_{n_j}(u_0)|\dd x\\
&\leq\int_{\R^N}\int_0^{u_0}|\varphi(\xi)-\varphi_{n_j}(\xi)|\dd \xi\dd x\\
&\leq\|u_0\|_{L^1}\sup_{|\xi|\leq\|u_0\|_{L^\infty}}|\varphi(\xi)-\varphi_{n_j}(\xi)|,
\end{split}
\end{equation*}
and since $\varphi_{n_j}\to\varphi$ locally uniformly, $
\lim_{n_j\to\infty}\int_{\R^N}\Phi_{n_j}(u_0)\dd x=\int_{\R^N}\Phi(u_0)\dd x$.
Observe also that by Theorem \ref{convapproxtoE} (and Remark \ref{rem:propapproxE}), \eqref{phiapprox}, and the proof of Theorem 2.6 in \cite{DTEnJa15}, we can take a further subsequence to get that $\varphi_{n_j}(u_{r_j,n_j})\to \varphi(u)$ a.e. in $Q_T$ as $j\to\infty$.

For any $R\geq r_j>0$, $|\varphi_{n_j}(u_{r_j,n_j})|_{{E}_{\mu_R}}^2\leq |\varphi_{n_j}(u_{r_j,n_j})|_{{E}_{\mu_{r_j}}}^2$, and thus, by the second part of Corollary \ref{energyapproxproblem},
$$
\int_0^\tau|\varphi_{n_j}(u_{r_j,n_j})|_{{E}_{\mu_R}}^2\dd t\leq \int_{\R^N}\Phi_{n_j}(u_0)\dd x.
$$ 
Taking the limit as $j\to\infty$, we obtain, by Fatou's lemma, the above calculations, and the estimate $\Phi(u_0)\leq\|\varphi(u_0)\|_{L^\infty}|u_0|$, that
$$
\int_0^\tau|\varphi(u)|_{{E}_{\mu_R}}^2\dd t\leq \int_{\R^N}\Phi(u_0)\dd x\leq \|\varphi(u_0)\|_{L^\infty}\|u_0\|_{L^1}.
$$ 
Another application of Fatou's lemma, as $R\to0^+$, and the choice $\tau=T$ yield 
$$
|\varphi(u)|_{T,E_{\mu}}\leq\|\varphi(u)\|_{L^\infty(\R^N)}\|u_0\|_{L^1(\R^N)}<\infty.
$$
The proof is complete.
\end{proof}

By Theorem \ref{uniqD}, we know that any subsequence of $\{u_{r,n}\}_{r,n\in\N}$ converges to the same limit, and hence, the whole sequence converges since it is bounded by Remark \ref{rem:propapproxE}. Let us then continue with the proof of the energy and $L^p$-estimates for {\em the} distributional solution of \eqref{E}--\eqref{IC}.

\begin{proof}[Proof of Theorem \ref{e-estim} (Energy inequality)]
By Remark \ref{L1Linftyboundsu},
\begin{equation*}
\begin{split}
&\left|\Phi(u(x,t))-\Phi_n(u_{r,n}(x,t))\right|\\
&\leq\sup_{|\xi|\leq2\|u_0\|_{L^\infty}}|\varphi(\xi)||u(x,t)-u_{r,n}(x,t)|+\|u_0\|_{L^\infty}\sup_{|\xi|\leq\|u_0\|_{L^\infty}}|\varphi(\xi)-\varphi_{n}(\xi)|.
\end{split}
\end{equation*}
Since $\varphi_n\to\varphi$ locally uniformly and we can find a subsequence of $\{u_{r,n}\}_{r,n}$ such that $u_{r_j,n_j}\to u$ a.e. in $Q_T$ as $j\to\infty$ by Theorem \ref{convapproxtoE}, $\Phi_{n_j}(u_{r_j,r_j}(x,t))\to \Phi(u(x,t))$ pointwise a.e. in $Q_T$ as $j\to\infty$. The conclusion then follows by Corollary \ref{energyapproxproblem}, Fatou's lemma, and the proof of Theorem \ref{ex_Dsol}.
\end{proof}

\begin{proof}[Proof of Theorem \ref{Lp-estim} ($L^p$-bounds)]
By Fatou's lemma and Theorem \ref{convapproxtoE}, we can take the limit as $j\to\infty$ (since $u_{r_j,n_j}\to u$ a.e. in $Q_T$ as $j\to\infty$ by considering a further subsequence in Theorem \ref{convapproxtoE}) in the second estimate in Corollary \ref{approxLpbound} to obtain the result. The cases $p=1$ and $p=\infty$ are explained in Remark \ref{L1Linftyboundsu}. 
\end{proof}

\subsection*{Acknowledgments}
\addcontentsline{toc}{section}{Acknowledgments}

E. R. Jakobsen was supported by the Toppforsk (research excellence)
project Waves and Nonlinear Phenomena (WaNP), grant no. 250070 from the Research Council of Norway. F. del Teso was
supported by the FPU grant AP2010-1843 and the grants MTM2011-24696
and MTM2014-52240-P from the Ministry of Education, Culture and
Sports, Spain, the BERC 2014-2017 program from the Basque Government,
and BCAM Severo Ochoa excellence accreditation SEV-2013-0323 from
Spanish Ministry of Economy and Competitiveness (MINECO). We would like to thank Stefano Lisini and Giampiero Palatucci for fruitful discussions on homogeneous fractional Sobolev spaces.

\appendix

\section{Proof of Theorem 2.6 (a)} 
\label{sec:spaceX}

Obviously $X\cap L^2(Q_T)\subset L^2(0,T;E_\lambda(\R^N))\cap L^\infty(Q_T)\cap L^2(Q_T)$, and we must show the opposite inclusion: Any
$$
f\in  L^2(0,T;E_\kernel(\R^N))\cap L^\infty(Q_T)\cap L^2(Q_T)
$$ 
belongs to $X\cap L^2(Q_T)$. To do so, we must prove that $f$ can be suitably approximated by functions in $C_\textup{c}^\infty(\R^N\times[0,T))$. We will now explain how to build such an approximation.

Let $\delta>0$ and $g_\delta:\R^{N+1}\to\R$ be defined by 
$$
g_\delta(x,t):=f(x,t)\mathbf{1}_{[2\delta,T-3\delta]}(t)
$$ 
and mollify $g_\delta$ to get
$$
G_\delta(x,t):=g_\delta\ast_{x,t}\rho_\delta(x,t)=\iint_{\R^{N+1}}g_\delta(y,s)\rho_\delta(x-y,t-s)\,\dd y\dd s
$$
where $\rho_\delta$ is defined by $\rho_\delta(\sigma,\tau):=\frac{1}{\delta^{N+1}}\rho\left(\frac{\sigma}{\delta}, \frac{\tau}{\delta}\right)$ for a fixed $0\leq\rho\in C_\textup{c}^\infty(\R^{N+1})$ satisfying $\text{supp}\,
\rho\subseteq \overline{B}(0,1)\times[-1,1]$, $\rho(\sigma,\tau)=\rho(-\sigma,-\tau)$, and $\iint\rho=1$. Note that $|g_\delta|\leq |f|$ and $G_\delta\in C^\infty(\R^{N+1})$ with support in $\R^N\times[\delta,T-2\delta]$.

\begin{lemma}\label{molliferenergyuniformlybounded}
Assume \textup{(A$_{\kernel0}$)}, \textup{(A$_{\kernel1}$')}, \textup{(A$_{\kernel 2}$)}, and $f\in  L^2(0,T;E_\kernel(\R^N))\cap L^\infty(Q_T)\cap L^2(Q_T)$. 
\vspace{-0.1cm}
\begin{enumerate}[(a)]
\item $G_\delta\in C_0(\R^{N+1})$.
\item $
|G_\delta|_{T,E_\kernel}^2\leq
C|f|_{T,E_\kernel}^2+4\|f\|_{L^2(Q_T)}^2\|\Pi_\kernel\|_{L^\infty(\R^N)}$
for some constant $C\geq0$.
\end{enumerate}
\end{lemma}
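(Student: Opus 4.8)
The plan is to prove the two parts in turn, with the energy estimate in (b) as the crux. For (a), recall that $G_\delta\in C^\infty(\R^{N+1})$ with $t$-support contained in $[\delta,T-2\delta]$, so only decay as $|x|\to\infty$ needs checking. Since $g_\delta$ is a time-truncation of $f\in L^2(Q_T)$ it belongs to $L^2(\R^{N+1})$, and by Cauchy--Schwartz $|G_\delta(x,t)|\le\|\rho_\delta\|_{L^2(\R^{N+1})}\big(\iint_{|x-y|\le\delta}|g_\delta(y,s)|^2\,\dd y\,\dd s\big)^{1/2}$, which tends to $0$ uniformly in the (compactly supported) variable $t$ as $|x|\to\infty$ by absolute continuity of $\iint|f|^2$. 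Hence $G_\delta\in C_0(\R^{N+1})$.

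For (b) I would start from the identity $G_\delta(x+z,t)-G_\delta(x,t)=\iint_{\R^{N+1}}\big(g_\delta(x+z-w,t-\tau)-g_\delta(x-w,t-\tau)\big)\rho_\delta(w,\tau)\,\dd w\,\dd\tau$, apply Jensen's inequality to its square (using $\rho_\delta\ge0$ and $\iint\rho_\delta=1$), insert the result into $|G_\delta|_{T,E_\kernel}^2=\frac12\int_0^T\int_{\R^N}\int_{|z|>0}\big(G_\delta(x+z,t)-G_\delta(x,t)\big)^2\kernel(x,\dd z)\,\dd x\,\dd t$, and pull the $(w,\tau)$-average out by Tonelli. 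It then suffices to bound, uniformly for $|w|\le\delta\le1$ (and $\delta<T/5$, say, so that $[2\delta,T-3\delta]\neq\emptyset$), the quantity $\frac12\int_0^T\int_{\R^N}\int_{|z|>0}\big(g_\delta(x+z-w,t-\tau)-g_\delta(x-w,t-\tau)\big)^2\kernel(x,\dd z)\,\dd x\,\dd t$, which I split at $|z|\le1$ and $|z|>1$. On $|z|\le1$, the change of variables $x\mapsto x+w$ turns $\kernel(x,\dd z)$ into $\kernel(x+w,\dd z)$, and since $|w|\le1$ the local shift-boundedness in \textup{(A$_{\kernel1}$')} gives $\kernel(x+w,\dd z)\le C\,\kernel(x,\dd z)$; the time-truncation guarantees that each slice $g_\delta(\cdot,s)$ equals $f(\cdot,s)$ or $0$ for $s\in(0,T)$, so this part is $\le C\,|f|_{T,E_\kernel}^2$ after enlarging $|z|\le1$ to $|z|>0$ and the $s$-interval to $(0,T)$.

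On $|z|>1$ I use $(a-b)^2\le 2a^2+2b^2$. The term coming from $g_\delta(x-w,t-\tau)^2$ is controlled directly, since $\int_{|z|>1}\kernel(x,\dd z)=\Pi_\kernel(x)\le\|\Pi_\kernel\|_{L^\infty(\R^N)}$, yielding (after a harmless translation in $x$) $\|\Pi_\kernel\|_{L^\infty(\R^N)}\|g_\delta(\cdot,t-\tau)\|_{L^2(\R^N)}^2$. The term coming from $g_\delta(x+z-w,t-\tau)^2$ is the main obstacle: a translation of $x$ by $z$ does \emph{not} preserve $\Pi_\kernel$, and shift-boundedness covers only $|z|\le1$, so neither tool applies. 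To get around this I would invoke the symmetry \textup{(A$_{\kernel2}$)}, which in kernel form reads $\int_{\R^N}\int_{|z|>0}h(x,x+z)\,\kernel(x,\dd z)\,\dd x=\int_{\R^N}\int_{|z|>0}h(x+z,x)\,\kernel(x,\dd z)\,\dd x$, applied with $h(x,y)=\mathbf{1}_{|x-y|>1}\,g_\delta(y-w,t-\tau)^2$; this converts the offending term into the one already handled. Summing the two, integrating in $t$ and then in $(w,\tau)$, and using $\|g_\delta\|_{L^2(\R^{N+1})}\le\|f\|_{L^2(Q_T)}$ (as $g_\delta$ is a time-truncation of $f$), the large-jump contribution is $\le 2\|\Pi_\kernel\|_{L^\infty(\R^N)}\|f\|_{L^2(Q_T)}^2$. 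Adding the small- and large-jump bounds gives the claim (indeed with $2$ in place of $4$).
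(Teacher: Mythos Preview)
Your argument is correct in both parts, and for part (b) it is essentially the parabolic adaptation of Lemma~2.2 in \cite{ScUe12} that the paper invokes without spelling out: Jensen on the mollification, splitting at $|z|=1$, local shift-boundedness for small jumps, and the symmetry \textup{(A$_{\kernel2}$)} to handle the translated square in the large-jump part. Your observation that the constant comes out as $2$ rather than $4$ is also right; the paper's $4$ is simply not sharp.

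For part (a) you take a different route from the paper. The paper argues via the Fourier transform: since $g_\delta,\rho_\delta\in L^2(\R^{N+1})$, the convolution theorem and H\"older give $\mathcal F(G_\delta)=\mathcal F(g_\delta)\mathcal F(\rho_\delta)\in L^1(\R^{N+1})$, and then $G_\delta\in C_0(\R^{N+1})$ by Riemann--Lebesgue. Your Cauchy--Schwartz estimate $|G_\delta(x,t)|\le\|\rho_\delta\|_{L^2}\big(\iint_{B_\delta(x,t)}|g_\delta|^2\big)^{1/2}$ is more elementary and avoids Fourier analysis entirely; the decay as $|x|\to\infty$ then follows because the $L^2$ mass of $g_\delta$ outside large balls tends to zero. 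One terminological quibble: what you call ``absolute continuity of $\iint|f|^2$'' is really the tail property $\iint_{|y|>R}|g_\delta|^2\to0$ as $R\to\infty$ (a direct consequence of $g_\delta\in L^2$), not absolute continuity in the measure-theoretic sense, which concerns sets of small Lebesgue measure rather than sets escaping to infinity. The argument itself is fine once this is said correctly.
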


\begin{remark}\label{rem:globallyshift-bounded}
If $\kernel$ is {\em globally} shift-bounded, that is, we replace the statement ``$B\subset B(0,1)\setminus\{0\}$'' with ``for all $B\in \R^N\setminus\{0\}$'' in  \textup{(A$_{\kernel1}$')}, then in (b) we get 
$$
|G_\delta|_{T,E_\kernel}^2\leq C|f|_{T,E_\kernel}^2.
$$
In this case, we do not have to assume $\Pi_\kernel\in L^\infty(\R^N)$ and \textup{(A$_{\kernel 2}$)}.
\end{remark}

\begin{proof}
\noindent(a) Since the Fourier transforms of $g_\delta$ and $\rho_\delta$ are both in $L^2(\R^{N+1})$, the properties of the Fourier transform and H\"older's inequality yield 
\begin{equation}\label{Fourierargument}
\mathcal{F}(G_\delta)=\mathcal{F}(g_\delta\ast \rho_\delta)=\mathcal{F}(g_\delta)\mathcal{F}(\rho_\delta)\in L^1(\R^{N+1}).
\end{equation}
The result then follows by the Riemann-Lebesgue lemma which gives that $G_\delta=\mathcal{F}^{-1}(\mathcal{F}(G_\delta))\in C_0(\R^{N+1})$.

\medskip
\noindent(b) The proof is a straightforward adaptation of the proof of Lemma 2.2 in \cite{ScUe12} and the estimate $|g_\delta|\leq|f|$. 
\end{proof}

Next, we recall a useful truncation from \cite{ScUe12}: Let $T_\delta:\R\to\R$ be defined by
$$
T_\delta(x):=\min\left\{\max\left\{-\frac{1}{\delta},x-\min\{\max\{-\delta,x\},\delta\}\right\}, \frac{1}{\delta}\right\}\quad\text{for all}\quad x\in\R.
$$
Observe that for all $x,y\in\R$
\begin{equation}\label{normalcontraction}
|T_\delta(x)|\leq|x|,\quad |T_\delta(x)-T_\delta(y)|\leq|x-y|,\text{ and}\quad T_\delta(x)\to x\text{ as $\delta\to0^+$}.
\end{equation}

We can now define a $C_\textup{c}^\infty$-approximation of $f$:
\begin{align}\label{w-delta}
w_\delta(x,t):=T_\delta[G_\delta]\ast_{x,t}\rho_\delta(x,t).
\end{align}

\begin{lemma}\label{sequenceinL2LinftyE}
Assume \textup{(A$_{\kernel0}$)}, \textup{(A$_{\kernel1}$')}, \textup{(A$_{\kernel 2}$)}, and $f\in  L^2(0,T;E_\kernel(\R^N))\cap L^\infty(Q_T)\cap L^2(Q_T)$. Then:
\begin{enumerate}[(a)]
\item $w_\delta\in C_\textup{c}^\infty(\R^{N+1})$ and $\supp w_\delta\subset \R^N\times[0,T-\delta]$.
\item $\|w_\delta-f\|_{L^2(Q_T)}\to0$ as $\delta\to0^+$.
\item For some $K\geq0$, $\|w_\delta\|_{L^2(Q_T)}+\|w_\delta\|_{L^\infty(Q_T)}+|w_\delta|_{T,E_\kernel}\leq K$ for all $\delta>0$.
\end{enumerate}
\end{lemma}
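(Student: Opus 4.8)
The plan is to prove the three items in order, using only the two facts about $G_\delta$ already established in Lemma \ref{molliferenergyuniformlybounded} --- that $G_\delta\in C_0(\R^{N+1})$ with $\supp G_\delta\subset\R^N\times[\delta,T-2\delta]$, and that $|G_\delta|_{T,E_\kernel}^2\leq C|f|_{T,E_\kernel}^2+4\|f\|_{L^2(Q_T)}^2\|\Pi_\kernel\|_{L^\infty(\R^N)}$ --- together with the elementary bounds $|T_\delta(a)|\leq|a|$, $(T_\delta(a)-T_\delta(b))^2\leq(a-b)^2$ and the convergence $T_\delta(a)\to a$ from \eqref{normalcontraction}. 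For (a): since $T_\delta(0)=0$, the function $T_\delta[G_\delta]$ vanishes wherever $|G_\delta|\leq\delta$; as $G_\delta\in C_0(\R^{N+1})$, the set $\{|G_\delta|>\delta\}$ is bounded, so $T_\delta[G_\delta]$ is continuous, bounded by $\tfrac1\delta$, and compactly supported, with support inside $\supp G_\delta\subset\R^N\times[\delta,T-2\delta]$. Convolving with $\rho_\delta$, whose support lies in $\overline{B}(0,\delta)\times[-\delta,\delta]$, then gives $w_\delta\in C_\textup{c}^\infty(\R^{N+1})$ with $\supp w_\delta\subset\R^N\times[0,T-\delta]$.

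\textbf{Part (b).} Extend $f$ by zero outside $Q_T$ and write $h_\delta:=T_\delta[G_\delta]$. I would show $h_\delta\to f$ in $L^2(\R^{N+1})$ and then pass to $w_\delta=h_\delta\ast\rho_\delta$. First, $g_\delta-f=f(\mathbf 1_{[2\delta,T-3\delta]}-1)$ is supported on a time set whose measure tends to $0$, so $g_\delta\to f$ in $L^2$ by dominated convergence; hence $\|G_\delta-f\|_{L^2}\leq\|g_\delta-f\|_{L^2}\|\rho_\delta\|_{L^1}+\|f\ast\rho_\delta-f\|_{L^2}\to0$ by Young's inequality and the approximate-identity property of $\rho_\delta$. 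Next, $T_\delta[f]\to f$ in $L^2$ by \eqref{normalcontraction} and dominated convergence ($|T_\delta[f]|\leq|f|$), so the contraction $|T_\delta(a)-T_\delta(b)|\leq|a-b|$ gives $\|h_\delta-f\|_{L^2}\leq\|G_\delta-f\|_{L^2}+\|T_\delta[f]-f\|_{L^2}\to0$. Finally $\|w_\delta-f\|_{L^2(Q_T)}\leq\|w_\delta-f\|_{L^2(\R^{N+1})}\leq\|h_\delta-f\|_{L^2}+\|f\ast\rho_\delta-f\|_{L^2}\to0$.

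\textbf{Part (c).} By Young's inequality and $|T_\delta(a)|\leq|a|$, one has $\|w_\delta\|_{L^\infty(Q_T)}\leq\|T_\delta[G_\delta]\|_{L^\infty}\leq\|G_\delta\|_{L^\infty}\leq\|f\|_{L^\infty(Q_T)}$ and $\|w_\delta\|_{L^2(Q_T)}\leq\|T_\delta[G_\delta]\|_{L^2}\leq\|G_\delta\|_{L^2}\leq\|f\|_{L^2(Q_T)}$, both uniform in $\delta$. For the energy seminorm, the pointwise inequality $(T_\delta(a)-T_\delta(b))^2\leq(a-b)^2$ yields $|T_\delta[G_\delta]|_{T,E_\kernel}\leq|G_\delta|_{T,E_\kernel}<\infty$; in particular $h_\delta=T_\delta[G_\delta]$ lies in $L^2(0,T;E_\kernel(\R^N))\cap L^2(Q_T)$ and is supported in time in $[\delta,T-2\delta]$. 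It then suffices to bound $|h_\delta\ast\rho_\delta|_{T,E_\kernel}$ in terms of $|h_\delta|_{T,E_\kernel}$ and $\|h_\delta\|_{L^2(Q_T)}$; this is exactly the estimate performed in the proof of Lemma \ref{molliferenergyuniformlybounded}(b) (Jensen's inequality to move the convolution inside the square, the change of variables $x\mapsto x-y$, local shift-boundedness \textup{(A$_{\kernel1}$')} to replace the shifted kernel $\kernel(x+y,\dd z)$ by $C\kernel(x,\dd z)$ on $\{|z|\leq1\}$, and $\|\Pi_\kernel\|_{L^\infty}$ together with the symmetry \textup{(A$_{\kernel2}$)} for the tail $\{|z|>1\}$), now applied with $h_\delta$ in place of $g_\delta$. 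Combined with Lemma \ref{molliferenergyuniformlybounded}(b) this gives $|w_\delta|_{T,E_\kernel}^2\leq C|G_\delta|_{T,E_\kernel}^2+4\|\Pi_\kernel\|_{L^\infty}\|f\|_{L^2(Q_T)}^2\leq K^2$ for a constant $K$ depending only on $f$, and taking $K$ large enough to also dominate the two bounds above completes (c).

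\textbf{Main obstacle.} The only step that is not routine is the energy bound in (c): because $\kernel(x,\dd z)$ depends on $x$, spatial mollification does not commute with $\mathcal{E}_\kernel$, so the translation produced by the Jensen/change-of-variables argument must be absorbed --- via local shift-boundedness for the singular part of the kernel and via the $L^\infty$-bound on $\Pi_\kernel$ (plus symmetry) for the tail. Since that computation already appears in the proof of Lemma \ref{molliferenergyuniformlybounded}(b), the genuinely new input here is just the observation that it applies verbatim to $T_\delta[G_\delta]$ and that the normal contraction $T_\delta$ chains it back to the bound on $|G_\delta|_{T,E_\kernel}$ without loss.
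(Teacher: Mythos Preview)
Your proof is correct and follows essentially the same approach as the paper's: part (a) uses that $G_\delta\in C_0$ forces $T_\delta[G_\delta]$ to be compactly supported, part (b) is a triangle-inequality decomposition using the contraction property of $T_\delta$ and the approximate-identity property of $\rho_\delta$, and part (c) applies the mollifier-energy estimate of Lemma~\ref{molliferenergyuniformlybounded}(b) to $T_\delta[G_\delta]$ and then chains back via the normal contraction. The only cosmetic difference is that in (b) you pivot all estimates through $f$ (writing $G_\delta\to f$, $T_\delta[f]\to f$, $h_\delta\to f$, $w_\delta\to f$), whereas the paper pivots through $g_\delta$ and $G_\delta$ (writing $\|w_\delta-f\|\leq\|w_\delta-G_\delta\|+\|G_\delta-g_\delta\|+\|g_\delta-f\|$ and then further decomposing the first term); both decompositions reduce to the same ingredients.
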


\begin{remark}\label{L^2L^parg}
If $f\in L^p$ for some $p\in[1,\infty)$, similar arguments show that
 (b) can be replaced by $\|w_\delta-f\|_{L^p(Q_T)}\to0$ as
$\delta\to0^+$.  Moreover, if the measure $\kernel$ is globally
shift-bounded, then we can relax assumption \textup{(A$_{\kernel1}$')} as in Remark
\ref{rem:globallyshift-bounded}, and replace the previous uniform bound of $|w_\delta|_{T,E_\kernel}$ by $C|f|_{T,E_\kernel}$ in (c).
\end{remark}

\begin{proof}
\noindent(a) Since $G_\delta$ vanishes at infinity, $T_\delta[G_\delta]$ has compact support, and therefore $w_\delta:=T_\delta[G_\delta]\ast_{x,t}\rho_\delta\in C_{\textup{c}}^\infty(\R^{N+1})$. Moreover, $\supp T_\delta[G_\delta]\subset \R^N\times[\delta,T-\delta]$ and hence $\supp w_\delta\subset \R^N\times[0,T-\delta]$. As a consequence, $w_\delta\subset C_{\textup{c}}^\infty(\R^N\times[0,T))$.

\medskip
\noindent(b) Note that $|T_\delta[G_\delta]|^2\leq|G_\delta|^2$ by \eqref{normalcontraction}, $\|G_\delta\|_{L^2}\leq\|g_\delta\|_{L^2}$, and $|g_\delta|^2\leq|f|^2$, and thus, all these functions are in $L^2$. Hence,
$$
\|w_\delta-f\|_{L^2(Q_T)}\leq\|w_\delta-G_\delta\|_{L^2(Q_T)}+\|G_\delta-g_\delta\|_{L^2(Q_T)}+\|g_\delta-f\|_{L^2(Q_T)}
$$
and by the properties of mollifiers and \eqref{normalcontraction},
\begin{equation*}
\begin{split}
\|w_\delta-G_\delta\|_{L^2}
\leq&\,\|T_\delta[G_\delta]\ast_{x,t}\rho_\delta-T_\delta[g_\delta]\ast_{x,t}\rho_\delta\|_{L^2}+\|T_\delta[g_\delta]\ast_{x,t}\rho_\delta-G_\delta\|_{L^2}\\
\leq&\,\|T_\delta[G_\delta]-T_\delta[g_\delta]\|_{L^2}+\|T_\delta[g_\delta]-g_\delta\|_{L^2}\\
\leq&\,\|G_\delta-g_\delta\|_{L^2}+\|T_\delta[g_\delta]-g_\delta\|_{L^2}.
\end{split}
\end{equation*}
Finally, we can use Lebesgue's dominated convergence theorem ($|T_\delta[g_\delta]|^2\leq|g_\delta|^2$ by \eqref{normalcontraction} and $|g_\delta|^2\leq|f|^2$) and the properties of mollifiers to conclude.

\medskip
\noindent(c) According to Lemma \ref{molliferenergyuniformlybounded} (b),
\begin{equation*}
\begin{split}
|w_\delta|_{T,E_\kernel}^2\leq C|T_\delta[G_\delta]|_{T,E_\kernel}^2+4\|T_\delta[G_\delta]\|_{L^2(Q_T)}^2\|\Pi_\kernel\|_{L^\infty(\R^N)},
\end{split}
\end{equation*}
and then by \eqref{normalcontraction},
$$
|T_\delta[G_\delta]|_{T,E_\kernel}^2\leq |G_\delta|_{T,E_\kernel}^2\qquad\text{and}\qquad\|T_\delta[G_\delta]\|_{L^2(Q_T)}^2\leq\|G_\delta\|_{L^2(Q_T)}^2.
$$
So, by another application of Lemma \ref{molliferenergyuniformlybounded}, the properties of mollifiers, $|g_\delta|\leq|f|$, and $|g_\delta(x,t)-g_\delta(y,t)|\leq|f(x,t)-f(y,t)|$, we have
\begin{equation*}
\begin{split}
|w_\delta|_{T,E_\kernel}^2
&\leq C\left(|f|_{T,E_\lambda}^2+\|f\|_{L^2(Q_T)}^2\|\Pi_\kernel\|_{L^\infty(\R^N)}\right).
\end{split}
\end{equation*}
Note also that by part (b), $\|w_\delta\|_{L^2(Q_T)}\leq\|f\|_{L^2(Q_T)}$, and moreover, by the properties of mollifiers, \eqref{normalcontraction}, and the definition of $g_\delta$,
\begin{equation*}
\begin{split}
\|w_\delta\|_{L^\infty(Q_T)}\leq\|T_\delta[G_\delta]\|_{L^\infty(\R^{N+1})}\leq\|G_\delta\|_{L^\infty(\R^{N+1})}\leq\|f\|_{L^\infty(Q_T)}.
\end{split}
\end{equation*}
This completes the proof.
\end{proof}

To prove Theorem \ref{prop:characterizationofX} (a), we will define from $\{w_\delta\}_{\delta>0}$ a $C_\textup{c}^\infty$-sequence that converges also in $|\cdot|_{T,E_\kernel}$.

\begin{proof}[Proof of Theorem \ref{prop:characterizationofX} (a)]
  This proof is an adaptation the proof of Theorem 2.4 in
  \cite{ScUe12}. Note that by standard arguments $L^2(Q_T)\cap
  L^2(0,T;E_\kernel(\R^N))$ is a Hilbert space with inner product
  $\langle \cdot,\cdot \rangle_{L^2(Q_T)}+\int_0^T\mathcal{E}_\kernel[\cdot,\cdot]\dd t$. By Lemma
  \ref{sequenceinL2LinftyE} (c) and Banach-Saks' theorem, there is a
  subsequence $\{w_{\delta_k}\}_{k\in\N}$ such that the C\'esaro mean
  of this subsequence converges to some function $\tilde{f}\in
  L^2(Q_T)\cap L^2(0,T;E_\kernel(\R^N))$:
$$
\left\|\frac{1}{n}\sum_{k=1}^nw_{\delta_k}-\tilde{f}\right\|_{L^2(Q_T)}^2+\left|\frac{1}{n}\sum_{k=1}^nw_{\delta_k}-\tilde{f}\right|_{T,E_\kernel}^2\to0\qquad\text{as}\qquad n\to\infty.
$$
Let $\phi_n=\frac{1}{n}\sum_{k=1}^nw_{\delta_k}$. Then $\phi_n\in C_\textup{c}^\infty(\R^N\times[0,T))$ and is uniformly bounded in $L^\infty(Q_T)$ since $w_\delta$ is (cf. Lemma \ref{sequenceinL2LinftyE} (c)). By Banach-Alaoglu's theorem, we can take a further subsequence (also denoted $\phi_n$) such that
$$
\phi_n\overset{*}{\rightharpoonup}\overline{f}\qquad\text{in}\qquad L^\infty(Q_T)\qquad\text{as}\qquad n\to\infty. 
$$

Since $w_\delta\to f$ in $L^2(Q_T)$ as $\delta\to0^+$ by Lemma \ref{sequenceinL2LinftyE} (b), any subsequence and any C\'esaro mean of this subsequence converges to $f\in L^2(Q_T)$. All three notions of convergence implies distributional convergence, and hence, the result follows since by uniqueness of limits, $f=\tilde{f}=\overline{f}$ in $\mathcal{D}'(\R^N\times[0,T))$ and then a.e. 
\end{proof}

\section{On the spaces $\dot H^{\frac\alpha2}(\R^N)$ and $L^2(0,T;\dot H^{\frac\alpha2}(\R^N))$}
\label{sec:homfracSob}

In the first part of this section we prove the equivalence between
three different definitions of the homogeneous Sobolev space $\dot
H^{\frac\alpha2}(\R^N)$ when $N>\alp$. These results are well-known,
but we were unable to find proofs that directly apply to our
setting. Then in the second part, we define the parabolic space
$L^2(0,T;\dot H^{\frac\alpha2}(\R^N))$ and show some of its
properties. Note that we do not define this space as a Bochner space,
but rather as an iterated $L^2$-$\dot H^{\frac\alp2}$ space. Our
discussion heavily relies on  \cite{BePa61}, \cite{BaChDa11},
\cite{ScUe12}, and \cite{PaPi14}.

In the next section we use  these results to prove Theorem \ref{prop:characterizationofX} (b).

\begin{proposition}\label{equivhomfracSob}
Assume $\alpha\in(0,2)$ and $N>\alpha$. Let $f\in \mathcal S'(\R^N)$,
a tempered distribution, $\mathcal{F}\{f\}$ its
Fourier transform, and
\[
|f|_{\Hdot^\frac{\alpha}{2}(\R^N)}^2:=\int_{\R^N}|\xi|^\alpha|\mathcal{F}\{f\}(\xi)|^2\dd \xi<\infty.
\]
The following definitions of $\Hdot^\frac{\alpha}{2}(\R^N)$ are equivalent:
\begin{enumerate}[(a)]
\item 
$\Hdot^\frac{\alpha}{2}_1(\R^N):=\{f\in \mathcal S'(\R^N)\ :\
\mathcal{F}\{f\}\in L_\textup{loc}^1(\R^N)\quad \text{and}\quad  |f|_{\Hdot^\frac{\alpha}{2}(\R^N)}<\infty\},$
\item $\Hdot^\frac{\alpha}{2}_2(\R^N):=\overline{C_\textup{c}^\infty(\R^N)}^{|\cdot|_{\Hdot^\frac{\alpha}{2}(\R^N)}}$, and
\item $\Hdot^\frac{\alpha}{2}_3(\R^N):=\{f\in L^{\frac{2N}{N-\alpha}}(\R^N) \,:\, |f|_{\Hdot^\frac{\alpha}{2}(\R^N)}<\infty\}.$
\end{enumerate}
\end{proposition}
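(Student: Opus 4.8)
The plan is to establish first the set-theoretic identity $\Hdot^{\frac{\alpha}{2}}_1(\R^N)=\Hdot^{\frac{\alpha}{2}}_3(\R^N)$ inside $\mathcal S'(\R^N)$, and then to identify $\Hdot^{\frac{\alpha}{2}}_2(\R^N)$ with this common space. Two analytic inputs will be used throughout, and both are precisely where the hypothesis $N>\alpha$ enters: (i) the Hardy--Littlewood--Sobolev inequality, in the form that the Riesz potential $I_{\frac\alpha2}$ (the Fourier multiplier $|\xi|^{-\frac\alpha2}$) is bounded $L^2(\R^N)\to L^{\frac{2N}{N-\alpha}}(\R^N)$, equivalently $\|\phi\|_{L^{\frac{2N}{N-\alpha}}(\R^N)}\leq C\,|\phi|_{\Hdot^{\frac{\alpha}{2}}(\R^N)}$ for $\phi\in C_{\mathrm c}^\infty(\R^N)$; and (ii) the local integrability $\int_{|\xi|\leq1}|\xi|^{-\alpha}\,\dd\xi<\infty$, which enters every low-frequency estimate.

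\emph{Step 1: $\Hdot^{\frac{\alpha}{2}}_1=\Hdot^{\frac{\alpha}{2}}_3$.} If $f\in\Hdot^{\frac{\alpha}{2}}_1(\R^N)$ I would set $g:=\mathcal F^{-1}\big(|\xi|^{\frac\alpha2}\mathcal F\{f\}\big)$, which lies in $L^2(\R^N)$ with $\|g\|_{L^2}=|f|_{\Hdot^{\frac{\alpha}{2}}}$; then $\mathcal F\{f\}=|\xi|^{-\frac\alpha2}\mathcal F\{g\}\in L^1_{\mathrm{loc}}(\R^N)$ by hypothesis, and this equals $\mathcal F\{I_{\frac\alpha2}g\}$ (using $|\xi|^{-\frac\alpha2}\in L^2_{\mathrm{loc}}(\R^N)$, again by $N>\alpha$, to pass to the limit from $g_n\in\mathcal S$). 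Since both tempered distributions have the same locally integrable Fourier transform, $f=I_{\frac\alpha2}g\in L^{\frac{2N}{N-\alpha}}(\R^N)$ by (i), i.e. $f\in\Hdot^{\frac{\alpha}{2}}_3(\R^N)$. Conversely, if $f\in\Hdot^{\frac{\alpha}{2}}_3(\R^N)$ then $\mathcal F\{f\}$ is a measurable function with $\int|\xi|^\alpha|\mathcal F\{f\}|^2<\infty$; on any annulus $\{R^{-1}<|\xi|<R\}$ the weight $|\xi|^{-\alpha}$ is bounded, so $\mathcal F\{f\}\in L^2_{\mathrm{loc}}(\R^N\setminus\{0\})$, while near the origin $\int_{|\xi|\leq1}|\mathcal F\{f\}|\,\dd\xi\leq\big(\int_{|\xi|\leq1}|\xi|^{-\alpha}\,\dd\xi\big)^{\frac12}|f|_{\Hdot^{\frac{\alpha}{2}}}<\infty$ by Cauchy--Schwarz and (ii). Hence $\mathcal F\{f\}\in L^1_{\mathrm{loc}}(\R^N)$ and $f\in\Hdot^{\frac{\alpha}{2}}_1(\R^N)$.

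\emph{Step 2: $\Hdot^{\frac{\alpha}{2}}_2=\Hdot^{\frac{\alpha}{2}}_1$.} For ``$\subseteq$'': a $|\cdot|_{\Hdot^{\frac{\alpha}{2}}}$-Cauchy sequence $\{\phi_n\}\subset C_{\mathrm c}^\infty(\R^N)$ has $|\xi|^{\frac\alpha2}\mathcal F\{\phi_n\}\to h$ in $L^2(\R^N)$, and by (i) also $\phi_n\to g$ in $L^{\frac{2N}{N-\alpha}}(\R^N)$; testing $\mathcal F\{\phi_n\}$ against functions supported away from the origin shows $\mathcal F\{g\}=|\xi|^{-\frac\alpha2}h$ on $\R^N\setminus\{0\}$, and since $g$ and $I_{\frac\alpha2}(\mathcal F^{-1}h)$ both lie in $L^{\frac{2N}{N-\alpha}}(\R^N)$ their difference is a polynomial, hence $0$; thus $\mathcal F\{g\}=|\xi|^{-\frac\alpha2}h\in L^1_{\mathrm{loc}}(\R^N)$, $|g|_{\Hdot^{\frac{\alpha}{2}}}=\|h\|_{L^2}<\infty$, so $g\in\Hdot^{\frac{\alpha}{2}}_1(\R^N)$ and $|g-\phi_n|_{\Hdot^{\frac{\alpha}{2}}}\to0$, i.e. $g$ represents the completion element. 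For ``$\supseteq$'': given $f\in\Hdot^{\frac{\alpha}{2}}_1(\R^N)$ I would first replace $\mathcal F\{f\}$ by $\mathcal F\{f\}\mathbf 1_{\{\varepsilon<|\xi|<1/\varepsilon\}}$ (error $\to0$ in $|\cdot|_{\Hdot^{\frac{\alpha}{2}}}$ by dominated convergence), then mollify on the Fourier side to land in $\mathcal S(\R^N)$ with spectrum in a fixed annulus $\{a\leq|\xi|\leq b\}$, where $|\xi|^\alpha$ is comparable to the weight $1$, and finally multiply such a Schwartz function $g$ by $\mathcal X(\cdot/R)$ with $\mathcal X\in C_{\mathrm c}^\infty(\R^N)$, $\mathcal X\equiv1$ near $0$; splitting $\int|\xi|^\alpha|\mathcal F\{g\mathcal X(\cdot/R)-g\}|^2\,\dd\xi$ at $|\xi|=1$ and using $|\xi|^\alpha\leq1$ on $\{|\xi|\leq1\}$ and $|\xi|^\alpha\leq|\xi|^2$ on $\{|\xi|\geq1\}$ (recall $\alpha<2$), this is bounded by $\|g\mathcal X(\cdot/R)-g\|_{L^2}^2+\|\nabla(g\mathcal X(\cdot/R)-g)\|_{L^2}^2\to0$. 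A diagonal argument then yields $C_{\mathrm c}^\infty(\R^N)$-density.

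\emph{Expected main obstacle.} The chain of truncations and mollifications in Step~2 is routine; the genuine difficulties are the identification of abstract completion elements with honest tempered distributions (which forces one to bring in the Sobolev embedding in order to even have $L^{\frac{2N}{N-\alpha}}$-convergence) and, closely related, the exclusion of an additive polynomial — an atom of $\mathcal F\{\cdot\}$ at the origin — in that identification. Both points, as well as the realization of $\Hdot^{\frac{\alpha}{2}}$ as a space of functions rather than of distributions modulo polynomials, rest squarely on $N>\alpha$, through the finiteness of the Sobolev exponent $\frac{2N}{N-\alpha}$ and the integrability of $|\xi|^{-\alpha}$ near the origin. I expect the Fourier-side bookkeeping in the ``$\subseteq$'' part of Step~2 (matching $\mathcal F\{g\}$ with $|\xi|^{-\frac\alpha2}h$ off the origin and killing the delta-type discrepancy) to be the most delicate point.
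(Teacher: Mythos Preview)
Your argument is correct, but it differs from the paper's proof in two interesting respects.

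First, in the density direction $\Hdot^{\frac{\alpha}{2}}_1\subset\Hdot^{\frac{\alpha}{2}}_2$ you work directly on the Fourier side: truncate $\mathcal F\{f\}$ to an annulus, mollify there, and then apply a physical-space cutoff to a Schwartz function with bounded spectrum. This gives strong $|\cdot|_{\Hdot^{\frac{\alpha}{2}}}$-convergence in each step and is entirely self-contained. The paper instead recycles its physical-space approximation $w_\delta$ from Appendix~\ref{sec:spaceX} (mollify, truncate via $T_\delta$, mollify again), obtaining only a \emph{uniform} energy bound together with $L^{\frac{2N}{N-\alpha}}$-convergence; it then invokes the Banach--Saks theorem on the Hilbert space $\Hdot^{\frac{\alpha}{2}}_1$ to produce Ces\`aro means converging strongly, and uses the embedding \eqref{emb} to identify the limit with $f$. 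Your route is more direct and avoids Banach--Saks; the paper's route reuses existing machinery and stays consistent with the parabolic argument that follows.

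Second, in the inclusion $\Hdot^{\frac{\alpha}{2}}_2\subset\Hdot^{\frac{\alpha}{2}}_1$ you are explicitly careful about realizing abstract completion elements as tempered distributions (via $L^{\frac{2N}{N-\alpha}}$-convergence and the exclusion of a polynomial discrepancy), whereas the paper simply asserts that ``it is clear that $f$ is a tempered distribution'' and checks $\mathcal F\{f\}\in L^1_{\mathrm{loc}}$ via H\"older's inequality as you do. Your extra care here is warranted and essentially fills in what the paper leaves implicit.
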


\begin{proof}
{\sc 1)}\quad By Propositions 1.34 and 1.37 and Theorem 1.38 in \cite{BaChDa11},  $\Hdot_1^\frac{\alpha}{2}(\R^N)$ is a Hilbert space with the norm
$$
|f|_{\Hdot^\frac{\alpha}{2}(\R^N)}=\int_{\R^N}|\xi|^\alpha|\mathcal{F}\{f\}(\xi)|^2\dd \xi=c_{N,\alpha}\int_{\R^N}\int_{|z|>0}\frac{|f(x+z)-f(x)|^2}{|z|^{N+\alpha}}\dd z\dd x,
$$
and $\Hdot_1^\frac{\alpha}{2}(\R^N)$ is continuously embedded in $L^{\frac{2N}{N-\alpha}}(\R^N)$ with
\begin{equation}\label{emb}
\|f\|_{L^{\frac{2N}{N-\alpha}}(\R^N)}\leq C |f|_{\Hdot^\frac{\alpha}{2}(\R^N)}.
\end{equation}
We also note that $\frac{2N}{N-\alpha}\in(2,\infty)$ as long as $N>\alpha$. 

\smallskip
\noindent  {\sc 2)}\quad $\Hdot^\frac{\alpha}{2}_2(\R^N) \subset \Hdot^\frac{\alpha}{2}_1(\R^N)$:  For any $f\in \Hdot^\frac{\alpha}{2}_2(\R^N)$,  it is clear that $f$ is a tempered distribution and $|f|_{\Hdot^\frac{\alpha}{2}}(\R^N)<\infty$. Furthermore, $\mathcal{F}\{f\}\in L^1_{\textup{loc}}(\R^N)$ since  for any compact $K\subset\R^N$, we use H\"{o}lder's inequality to get
\begin{equation}\label{L1locF}
\int_K|\mathcal{F}\{f\}|\dd \xi\leq\left(\int_{\R^N}|\xi|^\alpha|\mathcal{F}\{f\}|^2\dd \xi\right)^{\frac{1}{2}}\left(\int_{K}|\xi|^{-\alpha}\dd \xi\right)^{\frac{1}{2}}<\infty.
\end{equation}

\noindent  {\sc 3)}\quad $\Hdot^\frac{\alpha}{2}_1(\R^N) \subset \Hdot^\frac{\alpha}{2}_2(\R^N)$: Due to Remark \ref{L^2L^parg}, we can proceed as in Section \ref{sec:spaceX} (or Theorem 2.4 in \cite{ScUe12}): 
For any $f\in \Hdot^{\frac{\alpha}{2}}_1(\R^N)$, we construct an $C_\textup{c}^\infty$-approximation $w_\delta$ satisfying 
\begin{equation}\label{prophomfracSobapprox}
\|w_\delta-f\|_{L^\frac{2N}{N-\alpha}(\R^N)}\overset{\delta\to0^+}{\longrightarrow}0\qquad\text{and}\qquad|w_\delta|_{\Hdot^\frac{\alpha}{2}(\R^N)}\leq C|f|_{\Hdot^\frac{\alpha}{2}(\R^N)}.
\end{equation}
Hence since $\Hdot^\frac{\alpha}{2}_1(\R^N)$ is a Hilbert space, Banach-Saks' theorem ensures the existence of a subsequence $\{w_{\delta_k}\}_{k\in\N}$ and $\tilde{f}\in \Hdot^\frac{\alpha}{2}_1(\R^N)$ such that
$$
\left|\frac{1}{n}\sum_{k=1}^nw_{\delta_k}-\tilde{f}\right|_{\Hdot^\frac{\alpha}{2}(\R^N)}^2\to0\qquad\text{as}\qquad n\to\infty.
$$
By \eqref{emb}, these C\'esaro means converge to $\tilde f$ in
$L^{\frac{2N}{N-\alpha}}$. But by \eqref{prophomfracSobapprox}, they
also converge to $f$ in $L^{\frac{2N}{N-\alpha}}$, and hence $f=\tilde{f}$ a.e.

\smallskip
\noindent  {\sc 4)}\quad $\Hdot^\frac{\alpha}{2}_3(\R^N) \subset \Hdot^\frac{\alpha}{2}_1(\R^N)$: Since $f\in L^{\frac{2N}{N-\alpha}}(\R^N)$, it is a tempered distribution, and $\mathcal{F}\{f\}\in L^1_{\textup{loc}}(\R^N)$ by \eqref{L1locF}.

\smallskip
\noindent  {\sc 5)} $\Hdot^\frac{\alpha}{2}_1(\R^N) \subset \Hdot^\frac{\alpha}{2}_3(\R^N)$: This is just a consequence of \eqref{emb}.
\end{proof}

We now define and analyze the parabolic space
$L^2(0,T;\Hdot^\frac{\alpha}{2}(\R^N))$. In the
proof we will use the following iterated $L^p$-space \cite{BePa61}:
$$L^2(0,T;L^{q}(\R^N))=\Big\{f:\R^N\times(0,T)\to\R\
\text{ measurable}  \mid \int_0^T\|f(\cdot,t)\|^2_{L^q}\dd t<\infty\Big\},$$
for some $q\in(1,\infty)$. Note that this space is not a priori a Bochner space.

\begin{lemma}\label{parahomfracSob}
Let $\alpha\in(0,2)$, $N>\alpha$, and $\mu_\alpha(\dd z):=\frac{c_{N,\alpha}\dd z}{|z|^{N+\alpha}}$. Then the space
\begin{equation*}
\begin{split}
  L^2(0,T;\Hdot^\frac{\alpha}{2}(\R^N))&
  :=L^2(0,T;L^{\frac{2N}{N-\alpha}}(\R^N))\cap L^2(0,T;E_{\mu_\alpha}(\R^N))
\end{split}
\end{equation*}
is a Hilbert space with inner product 
\begin{equation*}
\begin{split}
\langle\psi,\phi\rangle 
&:=\int_0^T\int_{\R^N}|\xi|^\alpha\mathcal{F}\{\psi(\cdot,t)\}(\xi)\overline{\mathcal{F}\{\phi(\cdot,t)\}(\xi)}\dd \xi\dd t=c_{N,\alp}\int_0^T\mathcal{E}_{\mu_\alpha}[\psi,\phi]\dd t.
\end{split}
\end{equation*}
Moreover, $L^2(0,T;\Hdot^\frac{\alpha}{2}(\R^N))=\overline{C_\textup{c}^\infty(\R^N\times[0,T))}^{\sqrt{\langle\cdot,\cdot\rangle}}$.
\end{lemma}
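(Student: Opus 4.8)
The plan is to deduce the three assertions — the explicit form of the inner product, completeness, and density of smooth functions — from the fibrewise theory of $\Hdot^{\frac\alpha2}(\R^N)$ already established in Proposition \ref{equivhomfracSob}, following the same pattern as the proof of Theorem \ref{prop:characterizationofX} (a). \emph{Inner product and Hilbert structure:} first I would note that any $f$ in the intersection space has slices $f(\cdot,t)\in L^{\frac{2N}{N-\alpha}}(\R^N)$ with $|f(\cdot,t)|_{\Hdot^{\frac\alpha2}(\R^N)}<\infty$ for a.e.\ $t$, hence $f(\cdot,t)\in\Hdot^{\frac\alpha2}(\R^N)$ and $\mathcal F\{f(\cdot,t)\}\in L^1_\textup{loc}$ by Proposition \ref{equivhomfracSob}; the fibrewise identity $\int_{\R^N}|\xi|^\alpha|\mathcal F\{f(\cdot,t)\}|^2\dd\xi=c_{N,\alpha}\mathcal E_{\mu_\alpha}[f(\cdot,t),f(\cdot,t)]$ from that proposition then holds for a.e.\ $t$, and polarising it and integrating over $t\in(0,T)$ gives the two stated representations of $\langle\psi,\phi\rangle$, whose finiteness on $L^2(0,T;E_{\mu_\alpha}(\R^N))$ is exactly the Cauchy-Schwartz inequality of Lemma \ref{C-S}. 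Bilinearity and symmetry are immediate, and positive-definiteness uses the intersection with $L^2(0,T;L^{\frac{2N}{N-\alpha}}(\R^N))$: if $\langle f,f\rangle=0$ then $|f(\cdot,t)|_{E_{\mu_\alpha}}=0$ for a.e.\ $t$, so $f(\cdot,t)$ is a.e.\ constant and, lying in $L^{\frac{2N}{N-\alpha}}(\R^N)$, must vanish. Write $\|f\|:=\sqrt{\langle f,f\rangle}$.

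\emph{Completeness:} given a $\|\cdot\|$-Cauchy sequence $\{f_k\}$, the embedding \eqref{emb} used fibrewise and integrated in $t$ shows $\{f_k\}$ is Cauchy in the iterated space $L^2(0,T;L^{\frac{2N}{N-\alpha}}(\R^N))$, which is complete by \cite{BePa61}; let $f$ be its limit there. After passing to a subsequence (in the usual way, extracting a fast subsequence and dominating the telescoping series in the iterated norm) I may assume $f_k\to f$ a.e.\ in $Q_T$, so that $(f-f_k)(x+z,t)-(f-f_k)(x,t)$ is an a.e.\ limit along the subsequence of the corresponding differences. Fatou's lemma then yields $\|f-f_k\|^2\le\liminf_{m}\|f_{k_m}-f_k\|^2$, and the Cauchy property forces the right-hand side to $0$ as $k\to\infty$. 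In particular $f\in L^2(0,T;E_{\mu_\alpha}(\R^N))$, so $f$ lies in the space and $f_k\to f$ for $\|\cdot\|$.

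\emph{Density of $C_\textup{c}^\infty(\R^N\times[0,T))$:} here I would invoke Remark \ref{L^2L^parg}. Since $\mu_\alpha$ is translation invariant it is globally shift-bounded, so the approximation scheme of Appendix \ref{sec:spaceX} (time-truncation, space-time mollification, range-truncation by $T_\delta$, a second mollification) applies to any $f$ in the space with $L^2(Q_T)$ replaced by $L^2(0,T;L^{\frac{2N}{N-\alpha}}(\R^N))$ and with the $\Pi_\lambda$-error terms dropped (cf.\ Remark \ref{rem:globallyshift-bounded}), producing $w_\delta\in C_\textup{c}^\infty(\R^N\times[0,T))$ with $\|w_\delta-f\|_{L^2(0,T;L^{\frac{2N}{N-\alpha}})}\to0$ as $\delta\to0^+$ and $\sup_{\delta>0}|w_\delta|_{T,E_{\mu_\alpha}}\le C|f|_{T,E_{\mu_\alpha}}$. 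Thus $\{w_\delta\}$ is bounded in the (now established) Hilbert space $L^2(0,T;\Hdot^{\frac\alpha2}(\R^N))$; by the Banach-Saks theorem a subsequence has C\'esaro means $\phi_n=\frac1n\sum_{k=1}^nw_{\delta_k}\in C_\textup{c}^\infty(\R^N\times[0,T))$ converging in $\|\cdot\|$ to some $\tilde f$, hence, by \eqref{emb}, also in $L^2(0,T;L^{\frac{2N}{N-\alpha}})$, where the limit must equal $f$; therefore $\phi_n\to f$ for $\|\cdot\|$. This gives $L^2(0,T;\Hdot^{\frac\alpha2}(\R^N))\subseteq\overline{C_\textup{c}^\infty(\R^N\times[0,T))}^{\,\|\cdot\|}$, and the reverse inclusion is automatic since the space is complete and contains $C_\textup{c}^\infty(\R^N\times[0,T))$.

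\emph{Expected main obstacle:} the delicate point is the density step, namely rechecking that the scheme of Appendix \ref{sec:spaceX} still delivers $\|w_\delta-f\|_{L^2(0,T;L^{2N/(N-\alpha)})}\to0$ together with the uniform energy bound when the spatial integrability is only $L^{2N/(N-\alpha)}$ on the infinite-measure space $\R^N$, rather than $L^2\cap L^\infty$ as assumed in Lemmas \ref{molliferenergyuniformlybounded} and \ref{sequenceinL2LinftyE}. Global shift-boundedness of $\mu_\alpha$ is exactly what makes this possible, since it removes both the $L^\infty$-hypothesis and the $\Pi_\lambda$-dependent error terms from those lemmas; the remaining verifications (support properties of $w_\delta$, and convergence of the mollification/truncation steps in the iterated $L^2$-$L^{2N/(N-\alpha)}$ norm, using $|T_\delta(x)-x|\le|x|$ as a majorant) are routine.
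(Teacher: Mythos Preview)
Your proposal is correct and follows essentially the same approach as the paper: the inner product step via the fibrewise identity and the embedding \eqref{emb}, completeness via the embedding into the iterated space $L^2(0,T;L^{\frac{2N}{N-\alpha}}(\R^N))$ together with Fatou's lemma on an a.e.\ convergent subsequence, and density via the $w_\delta$-scheme of Appendix \ref{sec:spaceX} (using global shift-boundedness of $\mu_\alpha$ and Remark \ref{L^2L^parg}) combined with Banach--Saks. Your identification of the main obstacle---verifying that the approximation scheme still works with $L^{\frac{2N}{N-\alpha}}$ integrability in place of $L^2\cap L^\infty$---is exactly the point the paper addresses by appealing to the properties of iterated $L^p$-spaces from \cite{BePa61}.
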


\begin{proof}
{\sc 1)}\quad Embedding. Since $f\in L^2(0,T;\Hdot^\frac{\alpha}{2}(\R^N))$, we
have as a consequence of properties of iterated $L^p$-spaces \cite{BePa61} and Fubini's theorem that
$$
\|f(\cdot,t)\|_{L^{\frac{2N}{N-\alpha}}(\R^N)}\in L^2(0,T)\qquad\text{and}\qquad |f(\cdot,t)|_{\Hdot^\frac{\alpha}{2}(\R^N)}\in L^2(0,T).
$$
It follows that for a.e. $t\in(0,T)$, $f(\cdot,t)\in L^{\frac{2N}{N-\alpha}}(\R^N)\cap
E_{\mu_\alpha}(\R^N)$, and then $f(\cdot,t)\in
\Hdot^\frac{\alpha}{2}_1(\R^N)$  by Proposition
\ref{equivhomfracSob} (c). By \eqref{emb}, we can then conclude that
\begin{equation}\label{paremb}
\int_0^T\|f(\cdot,t)\|_{L^{\frac{2N}{N-\alpha}}(\R^N)}^2\dd t\leq C\int_0^T|f(\cdot,t)|_{{\Hdot^\frac{\alpha}{2}}(\R^N)}^2\dd t.
\end{equation}
\smallskip
\noindent  {\sc 2)}\quad Inner product space. Obviously $\langle\cdot,\cdot\rangle=\int_0^T|\cdot|_{{\Hdot^\frac{\alpha}{2}}(\R^N)}^2\dd
t=|\cdot|_{T,E_{\mu_\alp}}^2$ defines (the square of) a seminorm. By
\eqref{paremb}, 
$\int_0^T|f(\cdot,t)|_{{\Hdot^\frac{\alpha}{2}}(\R^N)}^2\dd t=0$
implies $f=0$ a.e. in $Q_T$, and hence the seminorm is a full
norm. Now it is easy to check that the space is an inner product space.

\medskip
\noindent  {\sc 3)}\quad Completeness. Let $\{f_n\}_{n\in\N}$ be a
Cauchy sequence in $L^2(0,T;\Hdot^\frac{\alpha}{2}(\R^N))$. By
definition and  \eqref{paremb}, it follows that
$$
\int_0^T\|f_n(\cdot,t)-f_m(\cdot,t)\|_{L^{\frac{2N}{N-\alpha}}(\R^N)}^2\dd
t\leq\int_0^T|f_n(\cdot,t)-f_m(\cdot,t)|_{{\Hdot^\frac{\alpha}{2}}(\R^N)}^2\dd
t\to 0
$$
as $n,m\to\infty$. Hence the sequence is Cauchy also in
$L^2(0,T;L^{\frac{2N}{N-\alpha}}(\R^N))$. By \cite{BePa61}, this space
is complete, and sequences converging in norm contain pointwise
a.e. converging subsequences.
Therefore there is 
$f\in L^2(0,T;L^{\frac{2N}{N-\alpha}}(\R^N))$ such that
$$
\int_0^T\|f_n(\cdot,t)-f(\cdot,t)\|_{L^{\frac{2N}{N-\alpha}}(\R^N)}\to0\qquad\text{as}\qquad n\to\infty, 
$$
and a subsequence $f_{n_k}\to f$ a.e. in $Q_T$ as
$n\to\infty$. By Fatou's lemma,  
\begin{equation*}
\begin{split}
\int_0^T|f_{m}(\cdot,t)-f(\cdot,t)|_{{\Hdot^\frac{\alpha}{2}}(\R^N)}^2\dd t&= \int_0^T|f_m(\cdot,t)-\lim_{k\to\infty}f_{n_k}(\cdot,t)|_{{\Hdot^\frac{\alpha}{2}}(\R^N)}^2\dd t\\
&\leq  \liminf_{k\to\infty}\int_0^T|f_m(\cdot,t)-f_{n_k}(\cdot,t)|_{{\Hdot^\frac{\alpha}{2}}(\R^N)}^2\dd t,
\end{split}
\end{equation*}
which goes to zero as $m\to\infty$.  Hence $f_n\to f$ in
$L^2(0,T;\Hdot^\frac{\alpha}{2}(\R^N))$, and then by the triangle
inequality $f\in L^2(0,T;\Hdot^\frac{\alpha}{2}(\R^N))$. 

\medskip

\noindent  {\sc 4)}\quad Density. 
For any $f\in L^2(0,T;\Hdot^\frac{\alpha}{2}(\R^N))$, the
$C_\textup{c}^\infty(\R^N\times[0,T))$ functions $w_\delta$ defined in
\eqref{w-delta} in Appendix \ref{sec:spaceX} satisfy 
\begin{equation}\label{parprophomfracSobapprox}
\begin{split}
\|w_\delta-f\|_{L^2(0,T;L^\frac{2N}{N-\alpha}(\R^N))}\overset{\delta\to0^+}{\longrightarrow}0\qquad\text{and}\qquad
|w_\delta|_{T,E_{\mu_\alpha}}\leq C|f|_{T,E_{\mu_\alpha}}.
\end{split}
\end{equation}
This follows from Remark \ref{L^2L^parg}, and the fact that the iterated
$L^p$-spaces have similar properties as the usual $L^p$-spaces with
respect to mollifications (by \cite{BePa61},
inequalities for convolutions, continuity of translations, dominated
convergence etc. are similar). Since
$L^2(0,T;\Hdot^\frac{\alpha}{2}(\R^N))$ is a Hilbert space,
Banach-Saks' theorem implies there is a subsequence $\{w_{\delta_k}\}_{k\in\N}$ and
$\tilde{f}\in L^2(0,T;\Hdot^\frac{\alpha}{2}(\R^N))$ such that
$$
\left|\frac{1}{n}\sum_{k=1}^nw_{\delta_k}-\tilde{f}\right|_{T,E_{\mu_\alpha}}^2\to0\qquad\text{as}\qquad n\to\infty.
$$
By \eqref{paremb}, these C\'esaro means converge to $\tilde f$ in
$L^2(0,T;L^\frac{2N}{N-\alpha}(\R^N))$. But by \eqref{parprophomfracSobapprox}, they
also converge to $f$ in this space, and hence $f=\tilde{f}$ a.e.
\end{proof}


\section{Proof of Theorem 2.6 (b)}
\label{sec:proofcomparablefracLap}

{\sc 1)}\quad By \textup{(A$_{\lambda 1}$'')}, the measure
$\kernel$ is globally shift-bounded and the (semi)norms on $E_\kernel$ and
$\Hdot^\frac{\alpha}{2}$ are comparable:
$ m|f|_{\Hdot^\frac{\alpha}{2}}\leq|f|_{E_\kernel}\leq M|f|_{\Hdot^\frac{\alpha}{2}}$. The latter gives
$$
E_\kernel(\R^N)=\{f\textup{ is measurable}:
|f|_{\Hdot^\frac{\alpha}{2}}<\infty\}=E_{\mu_\alpha}(\R^N).
$$ 

\smallskip
\noindent{\sc 2)}\quad For $N\leq \alpha$, the fractional Laplacian
(and hence also $A^\kernel$) is recurrent. In this case an easy
modification of the proof of Theorem 3.1 in \cite{ScUe12} yields
$$
X=L^\infty(Q_T)\cap L^2(0,T;E_{\mu_\alpha}(\R^N)).
$$
See the discussion on recurrence in Section \ref{sec:rem} for more details.

\medskip
\noindent{\sc 3)}\quad For $N>\alpha$, we always have $X\subset
L^\infty(Q_T)\cap L^2(0,T;E_{\mu_\alpha}(\R^N))$. To prove the reverse
inclusion, we must show that $C_\textup{c}^\infty$ is dense in
$L^\infty(Q_T)\cap L^2(0,T;E_{\mu_\alpha}(\R^N))$. This will follow
from  Lemma \ref{parahomfracSob} if we can show that 
$$L^\infty(Q_T)\cap L^2(0,T;E_{\mu_\alpha}(\R^N))\subset L^2(0,T;\dot H^{\frac\alp2}(\R^N)). $$
To prove this, we must show that any $g\in L^\infty(Q_T)\cap
L^2(0,T;E_{\mu_\alpha}(\R^N))$ also belongs to $L^2(0,T;L^{\frac{2N}{N-\alp}}(\R^N))$. 

Let
$h(t):=\|g(\cdot,t)\|_{L^{\frac{2N}{N-\alpha}}(\R^N)}^2$ for
$t\in(0,T)$. By Section 252P in \cite{Fre01} and measurability of $g$
on $\R^N\times(0,T)$, $h$ is a measurable function on
  $(0,T)$. We need to prove that $h$ belongs to
$L^1(0,T)$. As  a consequence of Fubini's theorem,  
$$
\|g(\cdot,t)\|_{L^\infty(\R^N)}\in L^\infty(0,T)\qquad\text{and}\qquad |g(\cdot,t)|_{\Hdot^\frac{\alpha}{2}(\R^N)}\in L^2(0,T),
$$
and then $g(\cdot,t)\in L^\infty(\R^N)\cap E_{\mu_\alpha}(\R^N)$
for a.e. $t\in(0,T)$. Hence for such $t$, $g(\cdot,t)$ is a tempered
distribution and belongs to $\Hdot^\frac{\alpha}{2}_1(\R^N)$ by the
argument of Step 4) in the proof of Proposition
\ref{equivhomfracSob}. Then by the embedding 
\eqref{emb}, $h(t)\leq
C|g(\cdot,t)|_{\Hdot^\frac{\alpha}{2}(\R^N)}^2$ for
a.e. $t$, and hence since
$|g(\cdot,t)|_{\Hdot^\frac{\alpha}{2}(\R^N)}^2\in L^1(0,T)$, it follows
that $h\in L^1(0,T)$ and $g\in L^2(0,T;L^{\frac{2N}{N-\alp}}(\R^N))$. The
proof is complete.


\section{Proof of Lemma 4.3}
\label{sec:technicalresult}

{\sc 1)}\quad Assume $1<p,q<\infty$. Consider $f_n(x):=(f\ast\omega_n)(x)$ and $g_n(x):=(g\ast\omega_n)(x)$ for $\omega_n$ defined by \eqref{mollifierspace}. By a direct computation,

\begin{equation*}
\begin{split}
&(f_ng_n)(x+z)-(f_ng_n)(x)-z\cdot D(f_ng_n)(x)\\
&=f_n(x)\left(g_n(x+z)-g_n(x)-z\cdot Dg_n(x)\right)\\
&\quad+g_n(x)\left(f_n(x+z)-f_n(x)-z\cdot Df_n(x)\right)\\
&\quad+(f_n(x+z)-f_n(x))(g_n(x+z)-g_n(x))\\
\end{split}
\end{equation*}
Integrate the above equality against $\nu(\dd z)\dd x$ to get
\begin{equation*}
\begin{split}
&\int_{\R^N}\Levy^{\nu}[f_ng_n](x)\dd x\\
&=\int_{\R^N}f_n(x)\Levy^{\nu}[g_n](x)\dd x+\int_{\R^N}g_n(x)\Levy^{\nu}[f_n](x)\dd x+2\mathcal{E}_\nu[f_n,g_n].
\end{split}
\end{equation*}
The three terms on the right-hand side are well-defined by H\"older's
inequality since the measure $\nu$ is finite. By Fubini's theorem, $\int_{\R^N}\Levy^{\nu}[f_ng_n](x)\dd x=0$
and thus, we obtain
\begin{equation}\label{approxsimpleStroock-Var}
\begin{split}
0=&\int_{\R^N}f_n(x)\Levy^{\nu}[g_n](x)\dd x+\int_{\R^N}g_n(x)\Levy^{\nu}[f_n](x)\dd x+2\mathcal{E}_\nu[f_n,g_n].
\end{split}
\end{equation}
By standard estimates for mollifiers, Tonelli's lemma, and H\"older's inequality,
\begin{equation*}
\begin{split}
&\left|\int_{\R^N}f_n\Levy^\nu[g_n]\dd x - \int_{\R^N}f\Levy^\nu[g]\dd x\right|\\
&\leq 2\nu(\R^N)\left(\|g\|_{L^q(\R^N)}\|f_n-f\|_{L^p(\R^N)}+\|f\|_{L^p(\R^N)}\|g_n-g\|_{L^q(\R^N)}\right),
\end{split}
\end{equation*}
and
\begin{equation*}
\begin{split}
&\big|2\mathcal{E}_\nu[f_n,g_n]-2\mathcal{E}_\nu[f,g]\big|\\
&\leq 4\nu(\R^N)\left(\|g\|_{L^q(\R^N)}\|f_n-f\|_{L^p(\R^N)}+\|f\|_{L^p(\R^N)}\|g_n-g\|_{L^q(\R^N)}\right).
\end{split}
\end{equation*}
Note that a similar argument holds for $\int_{\R^N}g_n\Levy^\nu[f_n]\dd x$. 
Taking the limit as $n\to\infty$ and using the properties of mollifiers, we obtain \eqref{approxsimpleStroock-Var} for $f,g$ replacing $f_n,g_n$ respectively. Since $\Levy^\nu$ is symmetric, we obtain
$$
\int_{\R^N}g(x)\Levy^{\nu}[f](x)\dd x=-\mathcal{E}_\nu[f,g].
$$

\smallskip
\noindent  {\sc 2)}\quad Assume $p=1, q=\infty$. Again we mollify, $f_n(x):=(f\ast\omega_n)(x)$ and $g_m(x):=(g\ast\omega_m)(x)$, and we obtain \eqref{approxsimpleStroock-Var} as above. 
We deduce (almost as before) that
\begin{equation*}
\begin{split}
&\left|\int_{\R^N}f_n\Levy^\nu[g_m]\dd x - \int_{\R^N}f\Levy^\nu[g]\dd x\right|\\
&\leq 2\nu(\R^N)\|g\|_{L^\infty(\R^N)}\|f_n-f\|_{L^1(\R^N)}\\
&\quad+\int_{\R^N}\int_{\R^N}\left|f_n(x)\right|\left|(g_m(x+z)-g(x+z))-(g_m(x)-g(x))\right| \,\nu(\dd z)\dd x,
\end{split}
\end{equation*}
and
\begin{equation*}
\begin{split}
\big|2\mathcal{E}_\nu[f_n,g_m]-2\mathcal{E}_\nu[f,g]\big|\leq &\int_{\R^N}\int_{\R^N}\left|f_n(x+z)-f_n(x)\right|\times\\
&\times\left|(g_m(x+z)-g(x+z))-(g_m(x)-g(x))\right| \nu(\dd z)\dd x\\
&+4\nu(\R^N)\|g\|_{L^\infty(\R^N)}\|f_n-f\|_{L^1(\R^N)}\\
\end{split}
\end{equation*}
Note that $\left|(g_m(x+z)-g(x+z))-(g_m(x)-g(x))\right|\leq4\|g\|_{L^\infty(\R^N)}$ and \linebreak$|f_n(x)|\in L^1(\R^N)$. Hence, for fixed $n$, we may send $m\to\infty$ by Lebesgue's dominated convergence theorem to obtain \eqref{approxsimpleStroock-Var} for $f_n,g$. Then we send $n\to\infty$ to obtain the same for $f,g$. Again, we use the symmetry of $\Levy^\nu$ to complete the proof.



\begin{thebibliography}{10}
\addcontentsline{toc}{section}{References}

\bibitem{Ali07}
{\sc N.~Alibaud.}
\newblock Entropy formulation for fractal conservation laws.
\newblock {\em J.~Evol.~Equ.}, 7(1):145--175, 2007.

\bibitem{AlAn10}
{\sc N.~Alibaud and B.~Andreianov.}
\newblock Non-uniqueness of weak solutions for the fractal Burgers equation.
\newblock {\em Ann. Inst. H. Poincar\'e
Anal. Non Lin\'eaire}, 27(4):997--1016, 2010.

\bibitem{AMRT10}
{\sc F. Andreu-Vaillo, J. M. Mazon, J. D. Rossi and J. J. Toledo-Melero.} 
\newblock {\em Nonlocal Diffusion Problems}
\newblock Math. Surveys Monogr., 165, AMS, Rhode Island, 2010.

\bibitem{App09} 
{\sc D.~Applebaum.}
\newblock \emph{L\'evy processes and Stochastic Calculus.}
\newblock Cambridge Studies in Advanced Mathematics, 116, Cambridge University Press, Cambridge, 2009.

\bibitem{BaBaChKa09}
{\sc M.~T.~Barlow, R.~F.~Bass, Z.-Q.~Chen and M.~Kassmann.}
\newblock Non-local Dirichlet forms and symmetric jump processes. 
\newblock {\em Trans. Amer. Math. Soc.}, 361(4):1963--1999, 2009.

\bibitem{BaPeSoVa14}
{\sc B. Barrios, I. Peral, F. Soria and E. Valdinoci}.
\newblock A Widder's type theorem for the heat equation with nonlocal diffusion.
\newblock \emph{Arch. Ration. Mech. Anal.}, 213(2):629--650, 2014.

\bibitem{BaChDa11}
{\sc H.~Bahouri, J.-Y.~Chemin and R.~Danchin.}
\newblock {\em Fourier Analysis and Nonlinear Partial Differential Equations.}
\newblock Grundlehren der mathematischen Wissenschaften, 343, Springer-Verlag, Berlin-Heidelberg, 2001.


\bibitem{BePa61}
{\sc A.~Benedek and R.~Panzone.}
\newblock The space $L^p$, with mixed norm.
\newblock {\em Duke Math. J.}, 28(3):301--324, 1961.


\bibitem{BiImKa15}
{\sc P.~Biler, C.~Imbert and G.~Karch.}
\newblock The nonlocal porous medium equation: {B}arenblatt profiles and other weak solutions.
\newblock \emph{Arch. Ration. Mech. Anal.}, 215:497--529, 2015.

\bibitem{BiKaMo10}
{\sc P.~Biler, G. Karch and R. Monneau.}
\newblock Nonlinear diffusion of dislocation density and self-similar solutions. 
\newblock \emph{Comm. Math. Phys.}, 294(1):145--168, 2010.

\bibitem{BoSeVa16}
{\sc M. Bonforte, A. Segatti and J. L. V\'azquez.}
\newblock Non-existence and instantaneous extinction of solutions for singular nonlinear fractional diffusion equations.
\newblock \emph{Calc. Var. Partial Differential Equations}, 55(3):55--68, 2016.

\bibitem{BoSiVa14}
{\sc M. Bonforte, Y. Sire and J. L. Vazquez.}
\newblock Existence, Uniqueness and Asymptotic behaviour for fractional porous medium equations on bounded domains.
\newblock \emph{Discrete Contin. Dyn. Syst.}, 35(12):5725--5767, 2015.

\bibitem{BoSiVa16}
{\sc M. Bonforte, Y. Sire and J. L. V\'azquez.}
\newblock  Optimal Existence and Uniqueness Theory for the Fractional Heat Equation.
\newblock  {\em Nonlinear Anal. TMA.}, 153:142--168, 2017.  

\bibitem{BoVa14}
{\sc M. Bonforte and J. L. V\'azquez.} 
\newblock Quantitative Local and Global A Priori Estimates for Fractional Nonlinear Diffusion Equations. 
\newblock \emph{Adv. Math.}, 250:242--284, 2014.

\bibitem{BoVa15}
{\sc M. Bonforte and J. L. V\'azquez.}
\newblock A Priori Estimates for Fractional Nonlinear Degenerate Diffusion Equations on bounded domains.
\newblock \emph{Arch. Ration. Mech. Anal.}, 218(1):317--362, 2015.


\bibitem{BoVa16}
{\sc M. Bonforte and J. L. V\'azquez.}
\newblock Fractional Nonlinear Degenerate Diffusion Equations on Bounded Domains Part I. Existence, Uniqueness and Upper Bounds.
\newblock \emph{Nonlinear Anal. TMA.}, 131:363--398,  2016.


\bibitem{BrPa15}
{\sc C.~Br\"andle and A.~de~Pablo.}
\newblock Nonlocal heat equations: decay estimates and Nash inequalities.
\newblock Preprint, arXiv:1312.4661v4 [math.AP], 2015.

\bibitem{BrCr79}
{\sc H. Br\'ezis and M. G. Crandall.}
\newblock Uniqueness of solutions of the initial--value problem for $u_t-\triangle \varphi(u)=0.$
\newblock {\it J. Math. Pures Appl.}, 58(2):153--163, 1979.

 
 \bibitem{CaVa11}
 {\sc L.~Caffarelli and J.~L. Vazquez.}
 \newblock Nonlinear porous medium flow with fractional potential pressure.
 \newblock \emph{Arch. Ration. Mech. Anal.}, 202(2):537--565, 2011.


\bibitem{CiJa11}
{\sc S.~Cifani and E.~R. Jakobsen.}
\newblock Entropy formulation for degenerate fractional order convection-diffusion equations.
\newblock \newblock \emph{Ann. Inst. H. Poincar\'e
Anal. Non Lin\'eaire}, 28(3):413--441, 2011.


\bibitem{EnJa14}
{\sc J.~Endal and E.~R.~Jakobsen.}
\newblock $L^1$ Contraction for Bounded (Nonintegrable) Solutions of Degenerate Parabolic Equations.
\newblock {\em SIAM J. Math. Anal.}, 46(6):3957--3982, 2014.


\bibitem{DTEnJa15}
{\sc F.~del Teso, J.~Endal and E.~R.~Jakobsen.}
\newblock Uniqueness and properties of distributional solutions of nonlocal equations of porous medium type.
\newblock \emph{Adv. Math.}, 305:78--143, 2017.

\bibitem{DTEnJa16}
{\sc F.~del Teso, J.~Endal and E.~R.~Jakobsen.}
\newblock Numerical analysis and methods for distributional solutions of nonlocal (and local) equations of porous medium type.
\newblock In preparation, 2016.

\bibitem{PaQuRo16}
{\sc A.~de Pablo, F.~Quir\'os and A.~Rodr\'iguez.}
\newblock Nonlocal filtration equations with rough kernels.
\newblock \emph{Nonlinear Anal. TMA}, 137:402--425, 2016.

\bibitem{PaQuRoVa11}
{\sc A.~de Pablo, F.~Quir\'os, A.~Rodr\'iguez and J.~L.~V\'azquez.}
\newblock A fractional porous medium equation.
\newblock \emph{Adv. Math.}, 226(2):1378--1409, 2011.

\bibitem{PaQuRoVa12}
{\sc A.~de~Pablo, F.~Quir\'os, A.~Rodr\'iguez and J.~L.~V\'azquez.}
\newblock A general fractional porous medium equation.
\newblock \emph{Comm. Pure Appl. Math.}, 65(9):1242--1284, 2012.

\bibitem{PaQuRoVa14}
{\sc A.~de Pablo, F.~Quir\'os, A.~Rodr\'iguez and J.~L.~V\'azquez.}
\newblock Classical solutions for a logarithmic fractional diffusion equation.
\newblock \emph{J. Math. Pures Appl.}, 101(6):901--924, 2014. 



\bibitem{Fre01}
{\sc D.~H.~Fremlin.}
\newblock {\em Measure theory. {V}ol. 2: Broad Foundations.}
\newblock Torres Fremlin, Colchester, 2001.


\bibitem{FuOsTa94}
{\sc M.~Fukushima, Y.~Oshima, and M.~Takeda.}
\newblock \emph{Dirichlet Forms and Symmetric Markov Processes.}
\newblock Studies in Mathematics, 19, De Gruyter, Berlin, 1994.

\bibitem{GrMuPu15}
{\sc L.~Grillo, M. Muratori and F. Punzo.}
\newblock Fractional porous media equations: existence and uniqueness
of weak solutions with measure data.
\newblock {\em Calc. Var. Partial Differential Equations}, 54(3):3303--3335, 2015.

\bibitem{KaSc14}
{\sc M.~Kassmann and R.~W.~Schwab.} 
\newblock Regularity results for nonlocal parabolic equations.
\newblock {\em Riv. Mat. Univ. Parma}, 5(1):183--212, 2014.


\bibitem{OlKaCz58}
{\sc O.~A.~Ole\u{i}nik, A.~S.~Kala\v{s}nikov and Y.-I.~\v{C}\v{z}ou.}
\newblock The Cauchy problem and boundary problems for equations of
the type of non-stationary filtration. 
\newblock (In Russian) {\em Izv. Akad. Nauk SSSR. Ser. Mat.}, 22:667--704, 1958. 


\bibitem{PaPi14}
{\sc G.~Palatucci and A.~Pisante.}
\newblock Improved Sobolev embeddings, profile decomposition, and concentration-compactness for fractional Sobolev spaces.
\newblock \emph{Calc.Var.}, 50:799--829, 2014.


\bibitem{Sat99}
{\sc K.~Sato.}
\newblock {\em L\'{e}vy processes and infinitely divisible distributions.}
\newblock Cambridge Studies in Advanced Mathematics, 68, Cambridge University Press, Cambridge, 1999.

\bibitem{ScUe07}
{\sc R.~L.~Schilling and T.~Uemura.}
\newblock On the Feller property of Dirichlet forms generated by pseudo differential operators.
\newblock \emph{T\^ohoku Math. J.}, 59:401--422, 2007.

\bibitem{ScUe12}
{\sc R.~L.~Schilling and T.~Uemura.}
\newblock On the Structure of the Domain of a Symmetric Jump-type Dirichlet Form.
\newblock \emph{Publ. RIMS Kyoto Univ.}, 48:1--20, 2012.

\bibitem{Sil14}
{\sc L.~Silvestre.}
\newblock Regularity estimates for parabolic integro-differential equations and
applications. 
\newblock In: {\em Proceedings of the International Congress of Mathematicians}, vol. III, 873--894, Kyung Moon SA, Seoul, 2014.


\bibitem{StanTesoVazCRAS}
{\sc D.~Stan, F.~ del Teso and J.~L.~V\'azquez.}
\newblock Finite and infinite speed of propagation for porous medium equations with fractional pressure.
\newblock \emph{C. R. Math. Acad. Sci. Paris}, 119:62--73, 2014.

\bibitem{StTeVa15}
{\sc D.~Stan, F.~ del Teso and J.~L.~V\'azquez.}
\newblock Transformations of self-similar solutions for porous medium equations of fractional type.
\newblock {\em Nonlinear Anal. TMA}, 119:62--73, 2015. 

\bibitem{StTeVa16}
{\sc D.~ Stan, F.~ del Teso and  J.~ L.~V{\'a}zquez}. 
\newblock Finite and infinite speed of propagation for porous medium equations with nonlocal pressure.
\newblock \emph{J. Differential Equations}, 260(2): 1154--1199, 2016.


\bibitem{SV79} 
{\sc D. W. Stroock and S. R. S. Varadhan.}
\newblock {\em Multidimensional diffusion processes.}
\newblock Grundlehren der mathematischen Wissenschaften, 233, 
Springer-Verlag, Berlin-New York, 1979.

\bibitem{Vaz06}
{\sc J.~L.~V\'azquez.}
\newblock \emph{Smoothing and decay estimates for nonlinear diffusion equations. Equations of porous medium type.} 
\newblock Oxford Lecture Series in Mathematics and its Applications, 33, Oxford University Press, Oxford, 2006.

\bibitem{Vaz07}
{\sc J.~L.~V\'azquez.}
\newblock \emph{The porous medium equation. Mathematical theory.}
\newblock Oxford Math. Monogr., The Clarendon Press, Oxford University Press, Oxford, 2007.


\bibitem{Zie89}
{\sc W.~P.~Ziemer.}
\newblock {\em Weakly Differentiable functions: Sobolev Spaces and Functions of Bounded Variation.}
\newblock Graduate texts in mathematics, 120, Springer-Verlag, New York, 1989.

\end{thebibliography}
\end{document}